\documentclass[reqno, 11pt]{amsart} 
\usepackage{amsfonts, amsmath, amssymb, amsthm}
\usepackage[margin=2.75cm, heightrounded]{geometry}
\usepackage{fancybox}
\usepackage{hhline, float}
\usepackage{mathrsfs}
\usepackage{nccmath}
\usepackage[dvipsnames]{xcolor}
\usepackage[colorlinks=true]{hyperref}
\hypersetup{citecolor=red, linkcolor=blue}

\allowdisplaybreaks
\numberwithin{equation}{section}

\usepackage[dvipsnames]{xcolor}
\usepackage[colorlinks=true]{hyperref}
\hypersetup{citecolor=red, linkcolor=blue}

\usepackage{amsthm}

\newtheorem{theorem}{Theorem}[section]

\newtheorem{lemma}[theorem]{Lemma}

\newtheorem*{theoremA}{Theorem A}

\theoremstyle{definition}

\newtheorem{remark}[theorem]{Remark}

\makeatletter
\@namedef{subjclassname@2020}{%
\textup{2020} Mathematics Subject Classification}
\makeatother

\begin{document}

\title[Non-degeneracy and local uniqueness of solutions]
{Non-degeneracy and local uniqueness of bubbling solutions for a singular Liouville equation}

\author{Yuxia Guo}
\address[Yuxia Guo]{Department of Mathematical Sciences, Tsinghua University}
\email{yguo@tsinghua.edu.cn}

\author{Jionghao Lv}
\address[Jionghao Lv]{Department of Mathematical Sciences, Tsinghua University}
\email{lvjh26@mails.tsinghua.edu.cn}

\begin{abstract}
We consider the following singular Liouville equation
\[
-\Delta v=\lambda V(x)|x|^2e^v \quad \text{in } B_1,
\quad
v=0 \quad \text{on } \partial B_1,
\]
where \(B_1\subset\mathbb R^2\) is the unit disk, \(\lambda>0\) is a small parameter, and \(V\) is a positive smooth function. We first prove a non-degeneracy result for the bubbling solutions constructed in \cite{D-W-Z2026} by the local Pohozaev identities. Then we use this non-degeneracy result to establish the local uniqueness of bubbling solutions whose concentration parameters are sufficiently close.
\end{abstract}
\maketitle

{\textbf{Keywords:} Singular Liouville equation, Bubble solutions, Non-degeneracy, Local uniqueness}

\section{Introduction}
Liouville-type equations with exponential nonlinearities arise naturally in two-dimensional nonlinear elliptic theory, with motivations ranging from geometry to mathematical physics. Typical examples include prescribed Gaussian curvature equations \cite{K-W1974}, conformal geometry \cite{A1998}, statistical mechanics \cite{C-L-M-P1992,C-L-M-P1995}, Chern-Simons-Higgs theory \cite{H-K-P1990,J-W1990}, electroweak theory \cite{A-O1990} and the Toda system
\cite{J-W2001}.

From a mathematical perspective, a fundamental difficulty in the study of Liouville type equations is the possible loss of compactness caused by exponential nonlinearity. The loss of compactness is often manifested through the blow-up phenomenon, whose analysis is closely related to results on existence, compactness, a priori estimates, etc. The blow-up behavior at points away from the origin has been thoroughly investigated. Starting with the uniform estimates of Brezis and Merle \cite{B-M1991}, and the quantization result of Li and Shafrir \cite{L-S1994}, refined blow-up analysis and asymptotic estimates were further established in \cite{C-L2002,C-L2003,L-1999,Z2006}.

In this paper, we are concerned with the following singular Liouville equation
\begin{equation*}
    -\Delta v = \lambda |x|^{2N} V(x)e^v
\end{equation*}
in a planar domain containing the origin, where $\lambda>0$ is a small parameter, $V$ is a positive smooth function, and $N>0$. When the blow-up point is away from the singular source, after a suitable scaling, the leading-order profile is given by the standard Liouville bubble. The situation becomes more delicate when blow-up occurs at the singular point. In particular, when $N\in\mathbb{N}$, the singular source is quantized and the limiting problem
\[
-\Delta U = |x|^{2N} e^U,
\quad
\int_{\mathbb{R}^2} |x|^{2N} e^U < +\infty
\]
admits non-radial finite mass solutions. By the classification result of Prajapat and Tarantello \cite{P-T2001}, viewing $\mathbb{R}^{2}$ as $\mathbb{C}$, such solutions are given by
\begin{equation}\label{eq:bubble U tau b}
    U_{\tau,b}
= \log \frac{8(N+1)^2 \tau}
{(\tau + \left|x^{N+1}-b\right|^2)^2},
\quad \tau > 0,\ b \in \mathbb{C},
\end{equation}
where $x^{N+1}$ denotes the complex $(N+1)$-power. This additional parameter $b$ is responsible for the possible occurrence of non-simple blow-up, which is equivalent to saying that the bubbling solutions may not satisfy the spherical Harnack inequality, and multiple local maxima near the singular source may appear. A lot of results have  been devoted to the analysis of such non-simple blow-up phenomena. We refer to \cite{K-L2016,B-T2002,D-W-Z2022,D-W-Z2024cvpde,D-W-Z2024,D-W-Z2026,W-Z2022,W-Z2026} and references therein. The obtained results revealed that the coefficient function must satisfy some vanishing conditions if non-simple blow-up occurs as a quantized singularity. Wei and Zhang \cite{W-Z2022} proved the first-order vanishing estimates  and later a Laplacian vanishing theorem in \cite{W-Z2026}. These results were further strengthened by D'Aprile, Wei and Zhang, who established higher-order vanishing theorems for non-simple blow-up solutions \cite{D-W-Z2024}. These vanishing results show that non-simple blow-up is strongly influenced by the behavior of the coefficient function near the singular source.

In the present paper, we focus on the case \(N=1\) and take \(\Omega\) to be the unit disk \(B_1\subset\mathbb{R}^{2}\) centered at the origin. More precisely, for a sequence $\lambda_m\to0^{+}$, let $v_m$ be a corresponding sequence of solutions to the following Dirichlet problem
\begin{equation}\label{eq:main problem}
\begin{cases}
-\Delta v_m = \lambda_m V(x)|x|^2 e^{v_m} & \text{in } B_1,\\
v_m = 0 & \text{on } \partial B_1.
\end{cases}
\end{equation}
We assume that the total mass is uniformly bounded, namely, there exists $C>0$, independent of $m$, such that
\[
\lambda_m \int_{B_1} V(x)|x|^2 e^{v_m}\,dx \le C,
\]
and that $v_m$ admits the origin as its only blow-up point in \(B_1\):
\begin{equation}\label{eq:0 is unique blowup point}
\max_{B_1} v_m \to +\infty,
\quad
\sup_{\varepsilon \leq |x| \leq 1} v_m < C(\varepsilon)
\quad \forall \varepsilon > 0.
\end{equation}
Furthermore, we impose standard assumptions on \(V\):
\begin{equation}\label{eq:standard assumptions on V}
V \in C^3(\overline{B_1}),
\quad
\min_{\overline{B_1}} V > 0,
\end{equation}
and
\begin{equation}\label{eq:V0 nabla V0 and nondegenerate}
    V(0)=1,
\quad
\nabla V(0)=0.
\end{equation}
We also assume that $0$ is a nondegenerate critical point of $V$. Hence, up to a rotation of the coordinates, \(V\) admits the following expansion in a small neighborhood of the origin:
\begin{equation}\label{eq:V}
V(x) = 1 + \frac{\gamma_1 x_1^2 + \gamma_2 x_2^2}{2} + O(|x|^3)
\quad \text{as } x \to 0.
\end{equation}
The result obtained in \cite{D-W-Z2026} states the following:
\begin{theoremA}\label{th A}
Assume that $V$ satisfies \eqref{eq:standard assumptions on V}-\eqref{eq:V} and
\begin{equation}\label{eq:hessian V is nondegenerate}
\det D^2 V(0) = \gamma_1 \cdot \gamma_2 > 0.
\end{equation}
Then there is an integer $m_0>0$ such that for any integer $m\geq m_0$, problem \eqref{eq:main problem}-\eqref{eq:0 is unique blowup point} has a solution $v_m$ of the form
\begin{equation}\label{eq:vm=PWm+phi m}
    v_m = P W_{m} + \phi_m,
\end{equation}
where $PW_m$ denotes the projection of $W_m$ onto $H_0^{1}(B_1)$, namely,
\begin{equation*}
\begin{cases}
\Delta P W_{m}=\Delta W_{m}, & x\in B_1,\\[4pt]
P W_{m}=0, & x\in \partial B_1,
\end{cases}
\end{equation*}
and
\begin{equation}\label{eq:bubble}
    W_m = W_{\lambda_m,b_m}
= \log \frac{\lambda_{m}}
{\left( \frac{\lambda_{m}}{32} + |x^2-b_m|^2 \right)^2},
\end{equation}
$\phi_m\in H_{0}^{1}(B_1)$, and as $m\to+\infty$, $\lambda_m\to0^{+}$, $b_{m}=O(\sqrt{\lambda_m})$,
\begin{equation}\label{eq:phim H01 estimate}
    \|\phi_m\|_{H_{0}^{1}(B_1)}\leq \lambda_{m}^{\frac{1}{2}-\varepsilon}
\end{equation}
for any fixed $\varepsilon>0$. 

Furthermore, setting 
\[
    \rho_{m}:=\left(\frac{\lambda_{m}}{32}\right)^{\frac{1}{4}},\quad\xi_m:=\frac{b_m}{\rho_m^{2}},
\]
one has $\xi_m\rightarrow\xi=(\xi^{*},0)\in \mathbb{R}^{2}$. After a suitable scaling, the limiting profile of $v_m$ is a bubble \eqref{eq:bubble U tau b} with $N=1$ and $b=\xi\in\mathbb{R}^{2}$, where
\[
\xi^{*}>0\ \text{if}\ |\gamma_1|<|\gamma_2|,\quad
\xi^{*}<0\ \text{if}\ |\gamma_1|>|\gamma_2|,\quad
\xi^{*}=0\ \text{if}\ \gamma_1=\gamma_2.
\]

\end{theoremA}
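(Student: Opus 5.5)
The statement is Theorem~A, which this paper quotes from \cite{D-W-Z2026}. I would prove it by a Lyapunov--Schmidt reduction around the explicit approximate solutions $PW_{\lambda,b}$, with $b=\rho^2\xi$ and $\rho=(\lambda/32)^{1/4}$.

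\textbf{Step 1: error of the ansatz.} Write $PW=W-\log\lambda+4\log(\frac{\lambda}{32}+|1-\bar b x^2|^2)+O(\cdot)$ using the harmonic correction on $B_1$. Rescale $x=\rho y$, so that $b=\rho^2\xi$ and $W(\rho y)+\log(\lambda\rho^4)$ becomes $U_{1,\xi}(y)$ with $N=1$. Since $V(\rho y)=1+\rho^2 Q(y)+O(\rho^3|y|^3)$, where $Q(y)=\frac{\gamma_1y_1^2+\gamma_2y_2^2}{2}$, the error $R=\Delta PW+\lambda V|x|^2e^{PW}$ has size $\rho^2$ in a weighted $L^p$ norm. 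Because $\int|y|^2e^{U}|y|^2=\infty$ only logarithmically, I would cut off at $|y|\le\rho^{-1}$ and use $p$ close to $1$.

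\textbf{Step 2: linear theory.} The linearized operator at $U_{1,\xi}$ is $L=\Delta+|y|^2e^{U_{1,\xi}}$. By the nondegeneracy of the Prajapat--Tarantello family (the kernel is spanned by $\partial_\tau U$, $\partial_{b_1}U$, $\partial_{b_2}U$, and the $\tau$-direction is removed by the Dirichlet condition / fixed scaling), I would solve the projected problem orthogonally to $Z_j=\partial_{b_j}W$ for $j=1,2$. A contradiction-plus-blow-up argument then gives $\|\phi\|\le C|\log\lambda|\,\|h\|_*$. A contraction mapping yields $\phi_{\lambda,\xi}$ with $\|\phi\|_{H_0^1}\le\lambda^{1/2-\varepsilon}$, which is \eqref{eq:phim H01 estimate}, solving the problem up to $\sum_j c_jZ_j$.

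\textbf{Step 3: the reduced problem, which is the main obstacle.} Testing with $Z_j$ shows that $c_j=0$ iff $\nabla_\xi F(\xi)+o(1)=0$, where
\[
F(\xi)=\int_{\mathbb R^2}Q(y)\,|y|^2e^{U_{1,\xi}(y)}\,dy
\]
is the leading term $\rho^2$ of the energy or Pohozaev expansion. Writing $z=y^2-\xi$ and computing in complex coordinates, $Q$ splits into a radial part $\frac{\gamma_1+\gamma_2}{4}|y|^2$ and an anisotropic part $\frac{\gamma_1-\gamma_2}{4}\mathrm{Re}(y^2)$. Substituting $w=y^2$ (a $2$-to-$1$ map with $|y|^2dy=\frac12 dw\cdot$const) gives
\[
F(\xi)\propto\int\frac{\frac{\gamma_1+\gamma_2}{4}|w|+\frac{\gamma_1-\gamma_2}{4}\mathrm{Re}\,w}{(1+|w-\xi|^2)^2}\,dw .
\]
The radial term is even in $\xi$ with a nondegenerate minimum or maximum at $0$ (sign of $\gamma_1+\gamma_2$). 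The linear term drifts along the real axis. Note that these integrals diverge at infinity, so a genuine cut-off at $|w|\lesssim\rho^{-2}$ must be kept; its $\xi$-dependent part produces a term like $c(\gamma_1-\gamma_2)\xi_1$ balanced against the radial curvature term. I expect precisely this bookkeeping of boundary and cut-off contributions, of orders $\rho^2|\log\rho|$ versus $\rho^2$, to be delicate. Solving gives a nondegenerate critical point $\xi=(\xi^*,0)$ with $\xi_2=0$ by the symmetry $y_2\mapsto-y_2$. The sign of $\xi^*$ is that of $(\gamma_2-\gamma_1)/(\gamma_1+\gamma_2)$-type combinations, and the hypothesis $\gamma_1\gamma_2>0$ makes this sign agree with that of $|\gamma_2|-|\gamma_1|$, with $\xi^*=0$ when $\gamma_1=\gamma_2$. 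Nondegeneracy, via $\gamma_1\gamma_2>0$, lets Brouwer degree persist under the $o(1)$ perturbation, giving $\xi_m\to\xi$. Finally, $v_m$ concentrates only at $0$, so \eqref{eq:0 is unique blowup point} holds.
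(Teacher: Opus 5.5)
The paper does not prove Theorem A itself. It quotes the result from \cite{D-W-Z2026}, correcting the exponent in the bound for $\|\phi_m\|$. The tools it imports from that source are the invertibility of the projected operator on $K_m^\perp$ with norm $O(|\log\lambda_m|)$, and $I_1(s)\neq0$. These show the source uses the Lyapunov--Schmidt scheme of your Steps 1--2, so your framework is the right one.

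The gap is in Step 3, and it starts from a false premise. For $N=1$ one has $|y|^2e^{U_\xi}=32|y|^2(1+|y^2-\xi|^2)^{-2}\sim 32|y|^{-6}$. Hence $Q\,|y|^2e^{U_\xi}=O(|y|^{-4})$, and after $w=y^2$ the integrand of $F$ is $O(|w|^{-3})$. Both are absolutely integrable on $\mathbb{R}^2$. The logarithmic divergence you have in mind belongs to the regular case $N=0$; compare Lemma~\ref{lem:zhangLp} with $s=2$, which carries no $\log$. So there is no cut-off bookkeeping and no $\rho^2|\log\rho|$ term. The reduced equations are simply $\rho^2\nabla F(\xi)+o(\rho^2)=0$ with $F$ finite. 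The drift is not a cut-off effect either; it is the exact identity $\int_{\mathbb{R}^2}\operatorname{Re}w\,(1+|w-\xi|^2)^{-2}\,dw=\pi\xi_1$. Hence
\[
F(\xi)=4(\gamma_1+\gamma_2)\,G(\xi)+4\pi(\gamma_1-\gamma_2)\,\xi_1,\qquad G(\xi):=\int_{\mathbb{R}^2}\frac{|\zeta+\xi|}{(1+|\zeta|^2)^2}\,d\zeta .
\]

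You then wave through the step ``Solving gives a nondegenerate critical point''. That step is where the content of the theorem lies, and you attach $\gamma_1\gamma_2>0$ to the wrong property. What is needed is the following.
\begin{itemize}
\item $G$ is radial and strictly convex.
\item $\nabla G(\xi)=\int\frac{(\zeta+\xi)/|\zeta+\xi|}{(1+|\zeta|^2)^2}\,d\zeta$ is parallel to $\xi$, with $|\nabla G|<\int(1+|\zeta|^2)^{-2}\,d\zeta=\pi$.
\item $s\mapsto\partial_1G(s,0)$ is an odd increasing bijection of $\mathbb{R}$ onto $(-\pi,\pi)$, by dominated convergence as $s\to\pm\infty$.
\end{itemize}
Therefore $\nabla F(\xi)=0$ iff $\xi_2=0$ and $\partial_1G(\xi_1,0)=\pi\frac{\gamma_2-\gamma_1}{\gamma_1+\gamma_2}$. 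This is solvable iff $|\gamma_2-\gamma_1|<|\gamma_1+\gamma_2|$, i.e.\ iff $\gamma_1\gamma_2>0$.

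So the hypothesis is the \emph{existence} condition for a zero of the leading-order reduced map. Nondegeneracy is then automatic: $D^2F(\xi^*,0)=16(\gamma_1+\gamma_2)\,\mathrm{diag}\bigl(I_1(\xi^*),I_2(\xi^*)\bigr)$ with $I_1,I_2>0$. These are the same integrals that appear in the proof of Lemma~\ref{lem:c1m=c2m=0}. The sign of $\xi^*$ is that of $(\gamma_2-\gamma_1)/(\gamma_1+\gamma_2)$, which under $\gamma_1\gamma_2>0$ is the sign of $|\gamma_2|-|\gamma_1|$.

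A minor slip: $8\pi\mathcal H(x^2,b)=2\log|1-\bar b x^2|^2$, so the coefficient in your Step 1 expansion of $PW$ should be $2$, not $4$.
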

\begin{remark}
The statement of Theorem A is quoted with a correction. In the original paper, the estimate corresponding to \eqref{eq:phim H01 estimate} contains the
exponent $\frac{3}{4}-\varepsilon$ due to a minor computational slip; the correct exponent should be $\frac{1}{2}-\varepsilon$.
\end{remark}

The purpose of this paper is to study the linearized structure of these bubbling solutions. Our first main result is the following.
\begin{theorem}\label{thm nondegenerate}
    Suppose that $V$ satisfies \eqref{eq:standard assumptions on V}-\eqref{eq:hessian V is nondegenerate}. Then there exists an integer $m_1>0$, such that for any integer $m>m_1$, the solution $v_m$ obtained in Theorem A is non-degenerate in the sense that if $\psi\in H_{0}^{1}(B_1)$ is a solution of the following equation:
\[
\begin{cases}
\widehat{L}_m\psi:=-\Delta \psi-\lambda_m V(x)|x|^2e^{v_m}\psi=0,
& x\in B_1,\\[4pt]
\psi=0,
& x\in \partial B_1,
\end{cases}
\]
then $\psi=0$.
\end{theorem}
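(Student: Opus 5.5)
We argue by contradiction via blow-up of the linearized problem combined with local Pohozaev identities, in the spirit of the non-degeneracy arguments for Liouville bubbles. Suppose that along a subsequence there are nontrivial solutions $\psi_m\in H_0^1(B_1)$ of $\widehat L_m\psi_m=0$, normalized so that $\|\psi_m\|_{L^\infty(B_1)}=1$. Set $\tilde\psi_m(y):=\psi_m(\rho_m y)$ and $\tilde v_m(y):=v_m(\rho_m y)+4\log\rho_m+\log\lambda_m$. The rescaled function $\tilde v_m$ converges in $C^2_{loc}(\mathbb R^2)$ to $U_{\xi}:=U_{1,\xi}$ with $N=1$, by Theorem A and \eqref{eq:phim H01 estimate} (up to the harmless normalization $\tau=1$). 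The equation becomes
\[
-\Delta\tilde\psi_m=V(\rho_m y)|y|^2e^{\tilde v_m}\tilde\psi_m .
\]
By elliptic estimates $\tilde\psi_m\to\psi_0$ in $C^1_{loc}$, where $\psi_0$ is a bounded solution of $-\Delta\psi_0=|y|^2e^{U_\xi}\psi_0$ in $\mathbb R^2$.

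\textbf{Step 1: classify the limit.} The bounded kernel of the linearization at $U_{\tau,b}$ is spanned by the derivatives with respect to the parameters $\tau$, $\mathrm{Re}\,b$ and $\mathrm{Im}\,b$ (the nondegeneracy result of Del Pino--Esposito--Musso type for $N\in\mathbb N$). Hence
\[
\psi_0=c_0\partial_\tau U_{\tau,\xi}\big|_{\tau=1}+c_1\partial_{b_1}U_{1,b}\big|_{b=\xi}+c_2\partial_{b_2}U_{1,b}\big|_{b=\xi}.
\]

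\textbf{Step 2: kill the coefficients.} To obtain $c_0=0$, we use the Green representation $\psi_m(x)=\int G(x,z)\lambda_mV|z|^2e^{v_m}\psi_m\,dz$. Away from the origin this gives $\psi_m=A_mG(x,0)+o(1)$, with $A_m=\int\lambda_mV|z|^2e^{v_m}\psi_m$. Applying the Pohozaev-type identity obtained by multiplying by $x\cdot\nabla v_m+2$ and by $\psi_m$, on $B_\delta$, and comparing the boundary terms forces $A_m\to0$. Since $\int |y|^2e^{U}\partial_\tau U\neq0$ fails (this integral is zero), we instead use the standard identity $\int|y|^2e^{U}\partial_\tau U\cdot$(weight from the boundary expansion). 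Concretely, we test with $y\cdot\nabla\tilde v_m+4$ to get $c_0=0$; this is the radial (dilation) Pohozaev identity. For $c_1,c_2$, we use the translation-type local Pohozaev identities
\[
\int_{\partial B_\delta}\Big(\partial_\nu v_m\partial_j\psi_m+\partial_\nu\psi_m\partial_jv_m-\nabla v_m\!\cdot\!\nabla\psi_m\,\nu_j\Big)
=\int_{B_\delta}\partial_j(\lambda_mV|x|^2)e^{v_m}\psi_m-\int_{\partial B_\delta}\lambda_mV|x|^2e^{v_m}\psi_m\nu_j .
\]
We expand the right-hand side using \eqref{eq:V}. The factor $\partial_j(|x|^2V)$ contributes $2x_j$ plus cubic terms involving $\gamma_j$. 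After rescaling, the boundary terms are of lower order, with $\psi_m=o(1)$ away from $0$. The leading term yields a $2\times2$ linear system in $(c_1,c_2)$. Its matrix is essentially the Hessian, in $b$, of the reduced energy function whose nondegenerate critical point $\xi$ was found in \cite{D-W-Z2026}. This matrix is invertible because $\gamma_1\gamma_2>0$ and $\xi=(\xi^*,0)$ is nondegenerate. Hence $c_1=c_2=0$.

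\textbf{Step 3: conclude.} Now $\psi_0\equiv0$, so $\tilde\psi_m\to0$ locally uniformly. Using the Green representation together with the decay $|y|^2e^{\tilde v_m}\le C(1+|y|)^{-6}$, we get $\psi_m\to0$ uniformly in $B_1$. This contradicts $\|\psi_m\|_\infty=1$.

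\textbf{Main obstacle.} The main obstacle is Step 2 for $c_1,c_2$. Since $\nabla V(0)=0$, the leading contributions cancel, so the identity must be expanded to order $\rho_m^2$, i.e.\ up to the Hessian terms. This requires a sharp expansion of $\psi_m-\sum c_i\partial U$ at order $\rho_m^2$, which in turn needs refined (pointwise rather than $H^1$) estimates on $\phi_m$ better than \eqref{eq:phim H01 estimate}. I would first derive such pointwise estimates, $|\phi_m|\le C\rho_m^{2-\varepsilon}$ in rescaled variables, and then compute the resulting integrals explicitly against $U_\xi$.
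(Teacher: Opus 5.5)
\textbf{Gap 1: the $c_1,c_2$ step.} Pohozaev identities built on the constant fields $e_j$ cannot detect these modes. The obstruction is parity, not $\nabla V(0)=0$. At leading order everything depends on $x$ only through the complex square $x^2$, so $W_m$, $PW_m$, $PZ_m^j$, $U_\xi$ and $Z_0,Z_1,Z_2$ are all invariant under $x\mapsto -x$. For even $v,\psi$, every term of your identity ($\partial_j(\lambda V|x|^2)e^{v}\psi$, $\partial_\nu v\,\partial_j\psi$, $\partial_\nu\psi\,\partial_j v$, $\nabla v\cdot\nabla\psi\,\nu_j$, $\lambda V|x|^2e^{v}\psi\,\nu_j$) is odd and integrates to zero. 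This includes the Hessian contributions, because the quadratic part of $V$ enters through $\partial_j(|x|^2V)$ as cubic monomials.

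For instance, if $V$ is even (allowed by the hypotheses), the reduction of Theorem A gives an even $v_m$ and the kernel splits into even and odd parts. The nonzero limits $\psi_0$ come only from the even part, and there your identity reads $0=0$. In general the surviving terms are driven by the cubic Taylor coefficients of $V$ and by odd corrections to $\widetilde v_m,\widetilde\psi_m$, which nothing controls. So expanding to order $\rho_m^2$ with sharper pointwise bounds on $\phi_m$ will not produce a system governed by $\gamma_1,\gamma_2,\xi$; your ``main obstacle'' misdiagnoses the cancellation.

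The missing idea is to use the fields that actually generate the $b$-translations. $U_{\tau,b}$ depends on $w=x^2$, and translation in $w$ pulls back to the conformal field $1/z$, i.e.\ $X=(x_1,-x_2)/|x|^2$ and $Y=(x_2,x_1)/|x|^2$, as in Lemma~\ref{lem:Pohozaev identity}. For these fields:
\begin{itemize}
\item conformality cancels the interior gradient terms, and $\operatorname{div}(|x|^2X)=\operatorname{div}(|x|^2Y)=0$;
\item the singularity at $0$ contributes point terms $2\pi\,\partial_k v(0)\,\partial_l\psi(0)$, which are small since $\nabla v_m(0)=o(1)$, once $\nabla\psi(0)$ is bounded;
\item the volume term $\lambda\int e^{v}\psi\,(V_{x_1}x_1-V_{x_2}x_2)$ (resp.\ with $V_{x_1}x_2+V_{x_2}x_1$) is quadratic in $x$ because $\nabla V(0)=0$, hence scale invariant.
\end{itemize}
This volume term tends to $\int e^{U_\xi}\psi_0\,(\gamma_1y_1^2-\gamma_2y_2^2)\,dy=O(1)$. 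It yields the matrix $8(\gamma_1+\gamma_2)\operatorname{diag}(I_1(s),I_2(s))$, which is invertible since $\gamma_1+\gamma_2\neq0$, $I_2(s)>0$ and $I_1(s)\neq0$. The $H^1$ bound on $\phi_m$ suffices for this.

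\textbf{Gap 2: the $c_0$ step.} As written it is not an argument. Proving $A_m\to0$ gives nothing. Since $\partial_\tau U_{\tau,\xi}|_{\tau=1}\to 1$ at infinity, matching $\widetilde\psi_m$ on $|y|=R$ with $A_mG(x,0)$, where $G(x,0)=-\frac{1}{2\pi}\log|x|$, already forces $A_m|\log\rho_m|\to 2\pi c_0$. What you need is $A_m=o(1/|\log\rho_m|)$, and testing with $y\cdot\nabla\widetilde v_m+4$ is not shown to give that rate.

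The paper never isolates $c_0$. It writes $\psi_m=c_{1,m}PZ_m^1+c_{2,m}PZ_m^2+\psi_m^*$ with $\psi_m^*\perp\{PZ_m^1,PZ_m^2\}$. It then uses the invertibility of the projected linearized operator on this complement, with $\|L_m^{-1}\|\le C|\log\lambda_m|$ (Proposition 3.7 of \cite{D-W-Z2026}), together with $\|R_{j,m}\|\le\lambda_m^{1/4+\eta}o(1/|\log\lambda_m|)$. This gives $\|\psi_m^*\|\le C\lambda_m^{1/4+\eta}(|c_{1,m}|+|c_{2,m}|)$, so the dilation mode is absorbed into $\psi_m^*$. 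The same bound makes all Pohozaev contributions of $\psi_m^*$ of size $o(1)(|c_{1,m}|+|c_{2,m}|)$; the local terms are handled via $|\nabla\psi_m^*(0)|\le C(|c_{1,m}|+|c_{2,m}|)$. Once $c_{1,m}=c_{2,m}=0$, it forces $\psi_m=0$ directly, with no Green-function matching or outer-region argument.

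To keep your blow-up framework you would need either this invertibility estimate or an actual dilation-Pohozaev computation producing the $o(1/|\log\rho_m|)$ rate.
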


Our second result is a local uniqueness theorem. Roughly speaking, if two bubbling solutions have the same parameter $\lambda_m$, satisfy the same type of a priori estimates as the constructed solutions, and their concentration parameters are sufficiently close, then the two solutions must coincide.
\begin{theorem}\label{thm:local uniqueness}
    Assume that $V$ satisfies the assumptions in Theorem \ref{thm nondegenerate}. Let $v_m^{(1)}$ and $v_m^{(2)}$ be two sequences of
solutions to \eqref{eq:main problem} with the same parameter $\lambda_m$, satisfying
\[
v_m^{(i)}
=
PW_{\lambda_m,b_m^{(i)}}+\phi_m^{(i)},
\quad i=1,2,
\]
and, for some fixed $M>0$ and sufficiently small $\varepsilon>0$
\begin{equation}\label{eq:local uniqueness assumption}
    |b_m^{(i)}|\le M\sqrt{\lambda_m},
\quad
\|\phi_m^{(i)}\|_{H_{0}^{1}(B_1)}\leq \lambda_{m}^{\frac{1}{2}-\varepsilon},
\qquad i=1,2.
\end{equation}
Moreover, for some $\sigma>\frac14$,
\begin{equation}\label{eq:bm1-bm2}
    |b_m^{(1)}-b_m^{(2)}|
=
O(\lambda_m^{\frac12+\sigma}).
\end{equation}
Then there is an integer $m_2>0$ such that, for every $m\ge m_2$, $
v_m^{(1)}\equiv v_m^{(2)}$.
\end{theorem}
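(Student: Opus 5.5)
We argue by contradiction, following the standard scheme that derives local uniqueness from non-degeneracy. Suppose that along a subsequence $v_m^{(1)}\not\equiv v_m^{(2)}$, and set
\[
\eta_m:=\frac{v_m^{(1)}-v_m^{(2)}}{\|v_m^{(1)}-v_m^{(2)}\|_{L^\infty(B_1)}}.
\]
Then $\|\eta_m\|_{L^\infty}=1$, $\eta_m=0$ on $\partial B_1$, and
\[
-\Delta\eta_m=\lambda_m V(x)|x|^2 c_m(x)\eta_m,\qquad c_m(x)=\int_0^1 e^{tv_m^{(1)}+(1-t)v_m^{(2)}}\,dt .
\]
Using \eqref{eq:local uniqueness assumption}, \eqref{eq:bm1-bm2} and the pointwise estimates on $\phi_m^{(i)}$ inherited from the construction in Theorem A, we first show that
\[
\lambda_m|x|^2 c_m(x)=\lambda_m|x|^2e^{v_m^{(1)}}\bigl(1+o(1)\bigr)
\]
uniformly on the bubble scale. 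The key input is that $|b_m^{(1)}-b_m^{(2)}|/\rho_m^2=O(\lambda_m^{\sigma})\to0$, so after the scaling $x=\rho_m y$ both bubbles converge to the same limit profile $U_{1,\xi}$ (up to the normalisation in \eqref{eq:bubble}). Consequently the scaled functions $\tilde\eta_m(y)=\eta_m(\rho_m y)$ converge in $C^1_{loc}(\mathbb R^2)$ to a bounded solution of the linearized equation
\[
-\Delta\tilde\eta=|y|^2e^{U_{1,\xi}}\tilde\eta .
\]
By the non-degeneracy of the $N=1$ bubble (kernel classification of del Pino--Esposito--Musso type), the limit has the form
\[
\tilde\eta=a_0Z_0+a_1Z_1+a_2Z_2,
\]
where $Z_0$ corresponds to the $\tau$-derivative and $Z_1,Z_2$ to the derivatives with respect to the real and imaginary parts of $b$.

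The core of the proof is to show $a_0=a_1=a_2=0$, repeating the local Pohozaev argument that underlies Theorem \ref{thm nondegenerate}, now for the difference quotient.
\begin{itemize}
\item To kill $a_0$, we test the equation for $\eta_m$ against the scaling vector field $x\cdot\nabla v_m^{(i)}$ (a Pohozaev identity on $B_\delta$) and subtract the two identities. Boundary terms are controlled by the Green's function expansion $v_m^{(i)}\to 8\pi G(x,0)$ away from the origin. The mass-type identity forces $a_0=0$.
\item To kill $a_1,a_2$, we use the translation-type Pohozaev identities with $\partial_{x_j}v_m^{(i)}$, and also the rotation/$x^2$-type identities adapted to the complex structure $x\mapsto x^2$. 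Subtracting the identities for $i=1,2$ and dividing by $\|v_m^{(1)}-v_m^{(2)}\|_\infty$ produces a linear system in $(a_1,a_2)$ whose coefficient matrix is, to leading order, $D^2V(0)$ composed with derivatives of the reduced function in $\xi$. By \eqref{eq:hessian V is nondegenerate} and $\gamma_1\gamma_2>0$ this matrix is invertible, so $a_1=a_2=0$.
\end{itemize}
The condition $\sigma>1/4$ is exactly what makes the error $|b_m^{(1)}-b_m^{(2)}|$ negligible against the $O(\rho_m^2|x|^2)$ contribution from $D^2V$ in these expansions: it yields $O(\lambda_m^{1/2+\sigma})=o(\rho_m^{2}\cdot\rho_m^{2}/\rho_m)$-type smallness.

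With $\tilde\eta\equiv0$, we get $\eta_m\to0$ uniformly on $B_{R\rho_m}$ for each $R$. Away from the bubble, the Green representation
\[
\eta_m(x)=\int G(x,y)\,\lambda_m V|y|^2c_m\eta_m\,dy,
\]
together with the decay $\lambda_m|y|^2c_m\lesssim\rho_m^{4}|y|^{-6}$ on $|y|\ge R\rho_m$, gives $|\eta_m|=o(1)$ there as well. This contradicts $\|\eta_m\|_\infty=1$.

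The main obstacle is the Pohozaev step. We must expand the difference of the identities to precisely the order at which $\gamma_1,\gamma_2$ appear (order $\rho_m^2$ relative to leading terms), while keeping all errors, coming from $\phi_m^{(i)}$ (only $H^1$-small, $\lambda_m^{1/2-\varepsilon}$) and from $b^{(1)}-b^{(2)}$, strictly smaller. Upgrading the $H^1$ bound on $\phi_m$ to weighted $L^\infty$ bounds near the origin will be done first, by reusing the linear theory from Theorem A.
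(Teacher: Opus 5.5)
Your route differs from the paper's. You normalize in $L^\infty$, kill $a_0,a_1,a_2$ with three Pohozaev identities, and close with a Green-function estimate outside the bubble. The paper normalizes in $H^1_0$, splits $\eta_m=d_{1,m}PZ_m^1+d_{2,m}PZ_m^2+\eta_m^*$, bounds $\eta_m^*$ quantitatively via the invertibility of the projected linearized operator ($\|(\widetilde L_m^C)^{-1}\|\le C|\log\lambda_m|$), and uses only two Pohozaev identities. The scaling mode never needs its own identity there: for the Dirichlet problem the operator is already invertible on $\{PZ_m^1,PZ_m^2\}^\perp$, since $PZ_m^0=Z_m^0+1+O(\lambda_m)$, so that mode is simply part of $\eta_m^*$.

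As written, your proposal has a genuine gap. You only obtain the qualitative convergence $\widetilde\eta_m\to\sum_j a_jZ_j$ in $C^1_{\mathrm{loc}}$, and you attribute all quantitative input to ``pointwise estimates on $\phi_m^{(i)}$ inherited from the construction''. There are none. The theorem concerns arbitrary solutions with only $\|\phi_m^{(i)}\|\le\lambda_m^{1/2-\varepsilon}$, and Theorem A itself gives nothing beyond this $H^1_0$ bound.

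This bites at the step you need most. Only the identities built on the singular fields $(x_1,-x_2)/|x|^2$ and $(x_2,x_1)/|x|^2$ (translations in $x^2$) detect $Z_1,Z_2$. The $\partial_{x_j}$-identities give nothing at leading order, because their main term pairs the odd weight $y_j$ with functions of $y^2$. For the singular fields the $\gamma$-terms are the order-one leading terms, not relative-$\rho_m^2$ corrections, but these identities also carry the point terms $2\pi\,\partial_j\bar v_m(0)\,\partial_k\eta_m(0)$. With your normalization, $\partial_k\eta_m(0)=\rho_m^{-1}\partial_k\widetilde\eta_m(0)$. Convergence in $C^1_{\mathrm{loc}}$ only gives $\nabla\widetilde\eta_m(0)\to0$ (since $\nabla Z_j(0)=0$), with no rate, and $\nabla\bar v_m(0)$ is only known to be $o(1)$. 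So these terms are $o(1)\cdot o(\rho_m^{-1})$ and uncontrolled. To close the argument you need either $\nabla\bar v_m(0)=O(\rho_m)$ or $|\nabla\widetilde\eta_m(0)|=O(\rho_m)$, i.e.\ a quantitative $C^1$ bound on the part of $\widetilde\eta_m$ transversal to the kernel. The paper gets this from Lemma~\ref{lem:estimate for eta m star}: $\|\eta_m^*\|\le C\lambda_m^{1/4+\sigma'}(|d_{1,m}|+|d_{2,m}|)$ gives $|\nabla\eta_m^*(0)|\le C(|d_{1,m}|+|d_{2,m}|)$, and $\nabla PZ_m^j(0)=O(\rho_m^2)$. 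That is where $\sigma>\frac14$ really enters (through $\rho_m^{4\sigma-1}$), not in the $D^2V$ expansion.

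The treatment of $a_0$ and of the outer region has the same defect. Since $\int_{\mathbb R^2}|y|^2e^{U_\xi}Z_j\,dy=0$ for $j=0,1,2$, the scaling identity does not see $a_0$ at leading order. It only yields a rate for the mass $A_m=\int_{B_1}\lambda_mV|x|^2c_m\eta_m$; note also that the limiting mass is $16\pi$, i.e.\ $v_m\to16\pi G(\cdot,0)$, not $8\pi G(\cdot,0)$. You must then match $\widetilde\eta_m\approx-a_0$ at $|y|=R$ with the Green representation $\eta_m(x)\approx A_mG(x,0)\approx\frac{A_m}{2\pi}|\log\rho_m|$ at $|x|=R\rho_m$. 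This gives $a_0=0$ only if $A_m=o(1/|\log\rho_m|)$.

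The same bound is what your last step actually requires. Near the bubble $G(\cdot,0)\sim|\log\rho_m|$, so the decay $\rho_m^4|y|^{-6}$ of the weight does not by itself give $\eta_m=o(1)$ outside $B_{R\rho_m}$. Both the matching and the outer estimate use $\lambda_m|x|^2c_m\le C|x|^2e^{W_m}$ pointwise, which again needs an $L^\infty$ bound on $\phi_m^{(i)}$ that you have not proved.

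Finally, invertibility of the $2\times2$ system does not follow from $\gamma_1\gamma_2>0$ alone. The leading matrix is $8(\gamma_1+\gamma_2)\operatorname{diag}(I_1(\xi^*),I_2(\xi^*))$, and you also need the nontrivial fact $I_1(\xi^*)\neq0$.
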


The proof of non-degeneracy proceeds by contradiction. Starting from a normalized kernel element, we rescale around the blow-up point and obtain a solution of the limiting linearized equation in $\mathbb{R}^{2}$. The kernel of the limiting operator is generated by three functions: one scaling mode and two translation modes. We then decompose the original kernel element into the two projected parameter modes and an orthogonal remainder. A priori estimates shows that the orthogonal remainder is controlled by the two corresponding coefficients. Finally, two Pohozaev-type identities and non-degeneracy condition \eqref{eq:hessian V is nondegenerate} rule out the remaining two coefficients. The estimate for the orthogonal part then forces the whole kernel element to vanish, contradicting the normalization.

The proof of local uniqueness follows a similar strategy, applied to the normalized difference of two solutions. In this case, the linearized coefficient is an averaged exponential term. The closeness assumption \eqref{eq:bm1-bm2} ensures that the perturbation caused by the gap between the two concentration parameters is sufficiently small and can be absorbed in the estimates.

The paper is organized as follows. Section 2 contains preliminaries, where we collect several lemmas that will be used later. In Section 3, we prove the non-degeneracy of the bubbling solutions. In Section 4, we establish the local uniqueness result.

Throughout the paper, we use $C$ to denote a positive constant which may vary from line to line.
\section{Preliminaries}

We begin by recalling the notation and some basic estimates from \cite{D-W-Z2026}. Let \(\mathcal H(x,y)\) be the regular part of the Green function in \(B_1\):
\begin{equation}\label{H(x,y)}
\mathcal H(x,y)
=
\frac{1}{2\pi}
\log\left( |x| \left| y-\frac{x}{|x|^2} \right| \right)
=
\frac{1}{4\pi}
\log\left( |x|^2 |y|^2 + 1 - 2\langle x,y\rangle \right),
\end{equation}
where $\langle\cdot,\cdot\rangle$ denotes the inner product in $\mathbb R^2$. If $b_m$ is sufficiently close to $0$,  the function $\mathcal H(x^2,b_m)$ is harmonic in $B_{1}$ and satisfies
\[
\mathcal H(x^2,b_m)
=
\frac{1}{2\pi}\log |x^2-b_m|
\quad \text{on } \partial B_1.
\]
Then, by the definition of $P$ and the maximum principle, one obtains the expansion
\begin{equation}\label{eq:Pw expand}
PW_m
=
W_m - \log\lambda_m
+8\pi \mathcal H(x^2,b_m)
+O(\lambda_m)
\end{equation}
uniformly for $x\in\overline{B_1}$ and \(b_m\) in a small neighborhood of \(0\). Equivalently,
\begin{equation*}
    PW_m=-2\log\left(
\frac{\lambda_m}{32}+|x^2-b_m|^2
\right)
+8\pi \mathcal H(x^2,b_m)
+O(\lambda_m).
\end{equation*}

The following lemma is a special case of the non-degeneracy result for entire solutions of the singular Liouville equation.
\begin{lemma}\cite{D-E-M2012}\label{lem:nondegeneracy of finite mass solutions of the singular Liouville equation}
Let $c\in\mathbb{C}$. Assume that $\phi:\mathbb{R}^2\to\mathbb{R}$ solves the problem
\begin{equation*}
-\Delta \phi
=
32\frac{|y|^2}{\left(1+|y^2-c|^2\right)^2}\phi
\quad \textit{in } \mathbb{R}^2,
\quad
\int_{\mathbb{R}^2} |\nabla \phi(y)|^2\,dy < +\infty .
\end{equation*}
Then there exist $a_{0}$, $a_{1}$, $a_{2}\in\mathbb{R}$ such that
\[
\phi(y)=a_0Z_0+a_1Z_1+a_2Z_2
\]
where
\begin{equation}\label{eq:Z012}
    Z_0(y):=
\frac{1-|y^2-c|^2}{1+|y^2-c|^2},
\quad
Z_1(y):=
\frac{\operatorname{Re}(y^2-c)}{1+|y^2-c|^2},
\quad
Z_2(y):=
\frac{\operatorname{Im}(y^2-c)}{1+|y^2-c|^2}.
\end{equation}
\end{lemma}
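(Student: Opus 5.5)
The plan is to reduce the problem to the classical regular Liouville equation by the change of variables $w=y^2$, and to handle the part of $\phi$ that does not descend to the $w$-plane by a separate complex-analytic argument.

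\emph{Step 1 (change of variables).} Set $w=y^2$, viewing $\mathbb R^2=\mathbb C$. Since $\Delta_y=4|y|^2\Delta_w$, the equation becomes
\[
-\Delta_w\phi=\frac{8}{(1+|w-c|^2)^2}\,\phi .
\]
This is the linearization of $-\Delta u=e^u$ at the standard bubble centred at $c$. This reformulation only makes sense for functions of $w$, so I split $\phi=\phi_e+\phi_o$ with $\phi_e(y)=\tfrac12(\phi(y)+\phi(-y))$ and $\phi_o(y)=\tfrac12(\phi(y)-\phi(-y))$. Both parts solve the equation, because the coefficient is even in $y$, and both have finite Dirichlet energy.

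\emph{Step 2 (even part).} $\phi_e$ descends to a function $\tilde\phi(w)$ on $\mathbb C\setminus\{0\}$. Conformal invariance of the Dirichlet integral gives $\int|\nabla_w\tilde\phi|^2=\tfrac12\int|\nabla_y\phi_e|^2<\infty$. The point $w=0$ has zero capacity, so $\tilde\phi$ is a weak finite-energy solution on all of $\mathbb C$. By the classical non-degeneracy of the Liouville bubble (Baraket--Pacard), translated by $c$, we get
\[
\tilde\phi\in\operatorname{span}\left\{\frac{1-|w-c|^2}{1+|w-c|^2},\ \frac{\operatorname{Re}(w-c)}{1+|w-c|^2},\ \frac{\operatorname{Im}(w-c)}{1+|w-c|^2}\right\}.
\]
Substituting $w=y^2$ gives exactly $\phi_e\in\operatorname{span}\{Z_0,Z_1,Z_2\}$ as in \eqref{eq:Z012}.

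\emph{Step 3 (odd part, the main obstacle).} We must show $\phi_o\equiv0$. Write the solution as $U=\log\frac{8|f'|^2}{(1+|f|^2)^2}$ with $f(y)=y^2-c$, so that $U$ solves $-\Delta U=e^U$ in the $y$-plane apart from the $|y|^2$ weight, which is absorbed into $|f'|^2=4|y|^2$. I would use the Liouville/complex representation of kernel elements: locally, every solution of the linearized equation has the form
\[
\phi=\operatorname{Re}\Big(\frac{g'}{f'}\Big)-\frac{2\operatorname{Re}(\bar f g)}{1+|f|^2}
\]
for some holomorphic $g$ (the variation of $f$). This can be proved on simply connected subdomains by integrating the linearized Gauss-curvature equation, for instance via the Schwarzian or Hopf-differential identity $\partial_y(\partial_y^2\phi-\partial_yU\,\partial_y\phi)=0$.

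The key steps are then as follows.
\begin{itemize}
\item From the identity above, the holomorphic quantity $h:=\partial_y^2\phi-\partial_yU\,\partial_y\phi$ is entire.
\item Finite energy, together with the decay rates of $\nabla U$, forces $h$ to be a polynomial of bounded degree, after comparing with $f$.
\item One then recovers $g$ with $g$ a polynomial in $y$ of degree at most $4$, or rational with poles only at the zero $y=0$ of $f'$.
\item Regularity of $\phi$ at $y=0$, where $f'$ vanishes, forces $g'(0)=0$ in the appropriate sense. Boundedness at infinity restricts $g$ to $\operatorname{span}\{1,\,f,\,f^2\}$ with complex coefficients.
\item Odd choices of $g$, such as $g=ay$ or $g=ay^3$, give $g'/f'\sim a/(2y)$, which is singular at the origin and hence excluded.
\end{itemize}
For odd $\phi$, $g$ must be odd in $y$, since $f$ is even. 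The constraints above then leave only $g=0$, so $\phi_o=0$. The even choices recover $Z_0,Z_1,Z_2$, consistent with Step 2.

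The delicate point is the global step in the third bullet: justifying that $g$ is globally single-valued and polynomial from only $\int|\nabla\phi|^2<\infty$. For this I would use the Hopf-differential identity to see that $h$ is entire, bound it by the energy (Cauchy estimates on large circles) to get a polynomial, and then solve the resulting ODE for $g$ explicitly. As a sanity check for $c=0$, a Fourier decomposition $\phi=\sum_k\phi_k(r)e^{ik\theta}$ reduces the equation to Legendre-type ODEs in $t=\log r$. In that case only the modes $k=0,\pm2$ admit finite-energy solutions, and all odd $k$ are excluded, matching the conclusion above.
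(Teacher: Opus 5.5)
The paper does not prove this lemma; it quotes it from del Pino--Esposito--Musso. There is no internal argument to compare with, so your proposal has to stand on its own.

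\textbf{What is sound.} Steps~1--2 are correct. The even part descends through $w=y^2$ to a finite-energy kernel element of the regular bubble. The point $w=0$ is removable, and finite energy gives boundedness via stereographic projection. Baraket--Pacard then returns exactly $Z_0,Z_1,Z_2$.

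\textbf{Where Step~3 fails.} The entire content of the lemma is $\phi_o\equiv0$, and Step~3 as written does not prove it.
\begin{itemize}
\item The quantity $h=\partial_y^2\phi-\partial_yU\,\partial_y\phi$ satisfies $\partial_{\bar y}h=0$ only on $\mathbb{C}\setminus\{0\}$. Since $\partial_yU=1/y+O(|y|)$, $h$ in general has a simple pole at $0$ with residue $-\partial_y\phi(0)$. For odd $\phi$ this need not vanish, so $h$ is not entire.
\item Recovering $g$ from $h$ means solving $v'''+2Sv'+S'v=2h$, with $v=g/f'$ and $S=\{f,y\}=-\tfrac{3}{2y^{2}}$. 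Already for a nonzero constant $h$, every solution contains $y^{3}\log y$. So the claim that $g$ is polynomial, or rational with poles only at $0$, is unsupported. This is exactly the global step you flag and leave open.
\item Your exclusion of $g=ay^{3}$ is wrong. Here $g'/f'=\tfrac32 ay$ is regular at the origin; this $g$ is ruled out only by its linear growth at infinity.
\end{itemize}

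\textbf{How to close the gap.} Show $h\equiv0$ outright.
\begin{itemize}
\item \emph{Decay at infinity.} The Kelvin transform $\hat\phi(x)=\phi(x/|x|^{2})$ has finite energy near $0$. It solves an equation with potential $32|x|^{2}/(|x|^{4}+|1-c\bar x^{2}|^{2})^{2}$, which is bounded near $0$, so $\hat\phi$ is smooth at $0$. This gives $\nabla\phi=O(|y|^{-2})$ and $\nabla^{2}\phi=O(|y|^{-3})$. Since $\partial_yU=O(|y|^{-1})$, it follows that $h=O(|y|^{-3})$ at infinity.
\item \emph{Vanishing of $h$.} The function $h+\partial_y\phi(0)/y$ is entire and tends to $0$, so it vanishes. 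The decay of $h$ then forces $\partial_y\phi(0)=0$, hence $h\equiv0$.
\item \emph{Reduction to a holomorphic $k$.} Since $h=e^{U}\partial_y(e^{-U}\partial_y\phi)$, we get $e^{-U}\partial_y\phi=\bar k$ with $k$ holomorphic on $\mathbb{C}\setminus\{0\}$. The equation $\partial_y\partial_{\bar y}\phi=-\tfrac14e^{U}\phi$ then yields $\phi=-4(\partial_yU\,k+k')$, and this expression must be real.
\item \emph{Algebraic conclusion.} Set $g:=f'k$; it is single-valued because it is built from $\phi$ itself. Writing locally $g=G\circ f$ and $w=f(y)$, the formula reads $\phi=-4F/(1+|w|^{2})$ with $F=G'(w)(1+|w|^{2})-2\bar wG(w)$. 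Applying $\partial_{\bar w}^{2}$ to $F=\bar F$ gives $(1+|w|^{2})\overline{G'''}=0$, so $G=\alpha+\beta w+\gamma w^{2}$. Reality of $F$ forces $\beta\in\mathbb{R}$ and $\gamma=-\bar\alpha$. Hence $\phi=-4\beta Z_0+16\operatorname{Re}\bigl(\bar\alpha(y^{2}-c)\bigr)/(1+|y^{2}-c|^{2})$, which lies in $\operatorname{span}\{Z_0,Z_1,Z_2\}$.
\end{itemize}
This argument treats the whole of $\phi$ at once. The even/odd splitting and the appeal to Baraket--Pacard become unnecessary, and you never need the general local representation formula that you assert without proof.
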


We next analyze the linearized operator, following the strategy of
\cite{D-W-Z2026}. More precisely, after the change
of variables $x=\left(\frac{\lambda_{m}}{32}\right)^{\frac14}y,$ Lemma~\ref{lem:nondegeneracy of finite mass solutions of the singular Liouville equation} implies that all solutions $\psi$
of
\[
-\Delta\psi
=
\frac{\lambda_{m} |x|^2}
{\left(\frac{\lambda_{m}}{32}+|x^2-b_m|^2\right)^2}\psi
=
|x|^2 e^{W_m}\psi
\quad \text{in } \mathbb{R}^2,
\quad
\int_{\mathbb{R}^2} |\nabla\psi|^2\,dx<+\infty
\]
are linear combinations of
\begin{equation}\label{eq: definition of Zm012}
 Z_m ^0(x)
=
\frac{\frac{\lambda_{m}}{32}-|x^2-b_{m}|^2}
{\frac{\lambda_{m}}{32}+|x^2-b_m|^2},\,Z_m^1(x)
=
\frac{\sqrt{\frac{\lambda_m}{32}}\,\operatorname{Re}(x^2-b_m)}
{\frac{\lambda_m}{32}+|x^2-b_m|^2},\,Z_m^2(x)
=
\frac{\sqrt{\frac{\lambda_m}{32}}\,\operatorname{Im}(x^2-b_m)}
{\frac{\lambda_m}{32}+|x^2-b_m|^2}.
\end{equation}
As in \cite{D-W-Z2026}, we introduce the projections
$PZ_m^j$ of $Z_m^j$ onto $H_0^1(B_1)$, then
\begin{equation}\label{eq:PZm0}
PZ_m^0(x)
=
Z_m^0(x)+1+O(\lambda_m),
\end{equation}
and
\begin{equation}\label{eq:PZm12}
PZ_m^j(x)
=
Z_m^j(x)+O(\sqrt{\lambda_m}),
\quad j=1,2,
\end{equation}
uniformly with respect to $x\in \overline{B_1}$ and $b_m$ in a small
neighborhood of $0$.

In the rest of the paper, we adopt the following notations. Let $\|\cdot\|$
and $\|\cdot\|_p$ denote the norms on $H_{0}^{1}(B_1)$ and $L^{p}(B_1)$, respectively, i.e.,
\begin{equation*}
    \|u\|:=\|u\|_{H_{0}^{1}(B_1)},\quad \|u\|_{p}:=\|u\|_{L^{p}(B_{1})},\quad \forall u\in H_{0}^{1}(B_1).
\end{equation*}

We recall the well-known Moser-Trudinger inequality (\cite{M1970,T1967}).

\begin{lemma}\label{lem:M-T inequality}
There exists $C>0$ such that
\begin{align*}
\int_{B_1} e^{\frac{4\pi u^2}{\|u\|^2}}\,dy
\leq C,
\quad \forall u \in H_0^1(B_1).
\end{align*}
In particular, for any $q \geq 1$,
\begin{align*}
\|e^u\|_q
\leq C_q e^{\frac{q}{16\pi}\|u\|^2},
\quad \forall u \in H_0^1(B_1).
\end{align*}
\end{lemma}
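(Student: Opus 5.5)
The first inequality is the classical Moser--Trudinger inequality on the unit disk, and I would not reprove it. If a sketch is wanted, the standard route is as follows. By Schwarz symmetrization one may replace $|u|$ by its radially decreasing rearrangement $u^*$. This preserves $\int_{B_1} e^{4\pi u^2/\|u\|^2}$ and does not increase $\|u\|$ (P\'olya--Szeg\H{o}), so it suffices to treat radial, nonincreasing $u$. Setting $|x|^2=e^{-t}$ and $w(t)=\sqrt{4\pi}\,u(x)$, the normalization $\|u\|=1$ becomes $\int_0^\infty \dot w^2\,dt= 1$ with $w(0)=0$. The integral becomes $\pi\int_0^\infty e^{w^2-t}\,dt$, and Moser's one-dimensional lemma bounds this uniformly. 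That lemma, a careful splitting of the $t$-axis combined with Cauchy--Schwarz $w(t)\le \sqrt t\,(\int_0^t\dot w^2)^{1/2}$, is the only genuinely hard step, and I would simply invoke \cite{M1970,T1967} for it.

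The second assertion follows from the first by an elementary Young-type inequality. Fix $q\ge1$ and $u\in H_0^1(B_1)$ with $u\not\equiv0$; the case $u\equiv0$ is trivial. For every real $s$, by $ab\le \frac{a^2}{2}+\frac{b^2}{2}$ applied with $a=\sqrt{8\pi}\,|s|/\|u\|$ and $b=q\|u\|/\sqrt{8\pi}$,
\[
q|s| \le \frac{4\pi s^2}{\|u\|^2}+\frac{q^2\|u\|^2}{16\pi}.
\]
Applying this with $s=u(y)$ and exponentiating gives
\[
\int_{B_1} e^{qu}\,dy\le \int_{B_1} e^{q|u|}\,dy\le e^{\frac{q^2}{16\pi}\|u\|^2}\int_{B_1}e^{\frac{4\pi u^2}{\|u\|^2}}\,dy\le C\,e^{\frac{q^2}{16\pi}\|u\|^2}.
\]

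Taking the $1/q$-th power yields $\|e^u\|_q\le C^{1/q}e^{\frac{q}{16\pi}\|u\|^2}$, which is the claim with $C_q=C^{1/q}$. Here $C_q\le \max(C,1)$, so the constant can even be chosen independent of $q$. There is no real obstacle in this part; the only point to check is that the weights in Young's inequality are chosen so that the quadratic term matches exactly the exponent $4\pi u^2/\|u\|^2$ controlled by the first inequality.
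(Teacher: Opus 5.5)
Your proposal is correct and follows essentially the paper's approach: the paper takes the first inequality from Moser and Trudinger without proof and states the $L^q$ bound as an immediate consequence. Your Young-inequality step $q|s|\le \frac{4\pi s^2}{\|u\|^2}+\frac{q^2\|u\|^2}{16\pi}$ is the standard way to fill in that consequence, and it gives the stated exponent with $C_q=C^{1/q}$.
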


We also need the following estimate from \cite{D-W-Z2026}.
\begin{lemma}\label{lem:zhangLp}
Let $s=0,1,2,3$ and $p\geq 1$ be fixed. The following holds:
\begin{equation}\label{eq:Lp estimate in Zhang}
\bigl\||x|^{2+s}e^{W_m}\bigr\|_{p} \leq C\lambda_m^{\frac{s}{4}}\lambda_m^{-\frac{p-1}{2p}},\quad \bigl\|\lambda_m |x|^{2+s}e^{PW_m}\bigr\|_{p} \leq C\lambda_m^{\frac{s}{4}}\lambda_m^{-\frac{p-1}{2p}}
\end{equation}
 uniformly for $|b_m|\leq M\sqrt{\lambda_m}$, where $M>0$ is a sufficiently large number to be chosen later.
\end{lemma}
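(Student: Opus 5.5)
The plan is a direct computation in the rescaled variable $y$, where the bubble becomes a fixed profile and the only issue is uniform decay at infinity. I would first prove the estimate for $e^{W_m}$, and then reduce the estimate for $\lambda_m e^{PW_m}$ to it using the expansion \eqref{eq:Pw expand}.

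\emph{Step 1: rescaling.} Set $\rho_m=(\lambda_m/32)^{1/4}$ and $x=\rho_m y$, and write $\xi_m=b_m/\rho_m^2$. The hypothesis $|b_m|\le M\sqrt{\lambda_m}$ gives $|\xi_m|\le \sqrt{32}\,M$, so $\xi_m$ stays in a fixed compact set. Since $|x^2-b_m|^2=\rho_m^4|y^2-\xi_m|^2$ and $\rho_m^4=\lambda_m/32$, we get
\[
e^{W_m}=\frac{\lambda_m}{\rho_m^8\,(1+|y^2-\xi_m|^2)^2}=\frac{C}{\lambda_m}\,\frac{1}{(1+|y^2-\xi_m|^2)^2}.
\]
Multiplying by $|x|^{2+s}=\rho_m^{2+s}|y|^{2+s}$ gives
\[
|x|^{2+s}e^{W_m}=C\lambda_m^{\frac{s}{4}-\frac12}\,\frac{|y|^{2+s}}{(1+|y^2-\xi_m|^2)^2}.
\]

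\emph{Step 2: the $L^p$ norm.} Using $dx=\rho_m^2\,dy$, we obtain
\[
\bigl\||x|^{2+s}e^{W_m}\bigr\|_p\le C\lambda_m^{\frac{s}{4}-\frac12}\Bigl(\rho_m^2\int_{\mathbb R^2}\frac{|y|^{(2+s)p}}{(1+|y^2-\xi_m|^2)^{2p}}\,dy\Bigr)^{1/p}.
\]
The key point is that this last integral is bounded uniformly for $|\xi_m|\le \sqrt{32}M$. It is locally bounded, being continuous in $(y,\xi)$. For $|y|^2\ge 2\sqrt{32}M$ we have $|y^2-\xi_m|\ge |y|^2/2$, so the integrand is at most $C|y|^{(s-6)p}$. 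Since $s\le 3$ and $p\ge1$, the exponent satisfies $(s-6)p\le -3<-2$, so the tail is integrable. Because $\rho_m^{2/p}=C\lambda_m^{\frac{1}{2p}}$, the norm is at most
\[
C\lambda_m^{\frac{s}{4}-\frac12+\frac1{2p}}=C\lambda_m^{\frac s4}\lambda_m^{-\frac{p-1}{2p}},
\]
which is the first estimate.

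\emph{Step 3: the projected bubble.} By \eqref{eq:Pw expand},
\[
\lambda_m e^{PW_m}=e^{W_m}\,e^{8\pi\mathcal H(x^2,b_m)+O(\lambda_m)}.
\]
By \eqref{H(x,y)}, $\mathcal H(x^2,b_m)=\frac1{4\pi}\log\bigl(|x^2|^2|b_m|^2+1-2\langle x^2,b_m\rangle\bigr)$. For $|x|\le1$ and $|b_m|$ small, the argument of the logarithm lies between $(1-|b_m|)^2\ge\frac14$ and $4$. Hence $\mathcal H(x^2,b_m)$ is uniformly bounded, and $\lambda_m e^{PW_m}\le Ce^{W_m}$ pointwise on $B_1$. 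The second estimate then follows from the first.

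\emph{Main obstacle.} The only delicate point is the uniformity of the tail bound in Step 2 with respect to $\xi_m$. It is handled by the compactness of the set of admissible $\xi_m$, together with the decay exponent $(s-6)p<-2$, which holds precisely because $s\le3$.
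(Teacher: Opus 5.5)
Your proof is correct. The paper does not prove this lemma itself (it is imported from \cite{D-W-Z2026}), and your argument is the standard direct verification of such estimates. The rescaling $x=\rho_m y$ gives $|x|^{2+s}e^{W_m}=C\lambda_m^{\frac s4-\frac12}|y|^{2+s}(1+|y^2-\xi_m|^2)^{-2}$ with $|\xi_m|\le\sqrt{32}M$, and the tail bound $|y|^{(s-6)p}$ with $(s-6)p\le-3$ makes the rescaled integral uniformly bounded. For the second estimate, $\mathcal H(x^2,b_m)$ is uniformly bounded once $|b_m|\le\frac12$, so the expansion \eqref{eq:Pw expand} gives $\lambda_m e^{PW_m}\le Ce^{W_m}$ and reduces it to the first.
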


\section{The non-degeneracy of the solutions}

Let
\begin{equation}\label{eq:main-equation}
\begin{cases}
-\Delta v
=
\lambda V(x)|x|^2 e^{v},
& x\in B_1,\\[2mm]
v=0,
& x\in \partial B_1,
\end{cases}
\end{equation}
and
\begin{equation}\label{eq:linearized-equation}
\begin{cases}
-\Delta \psi
=
\lambda V(x)|x|^2 e^{v}\psi,
& x\in B_1,\\[2mm]
\psi=0,
& x\in \partial B_1.
\end{cases}
\end{equation}

We have the following identities.
\begin{lemma}\label{lem:Pohozaev identity}
It holds
\begin{equation}\label{eq:Pohozaev one}
\begin{aligned}
&\lambda\int_{B_1} e^{v}\psi
\left(
\frac{\partial V}{\partial x_1}x_1
-
\frac{\partial V}{\partial x_2}x_2
\right)\,dx  =\\
&\quad
\int_{\partial B_1}
\frac{\partial v}{\partial \nu}
\frac{\partial \psi}{\partial \nu}
(x_1^2-x_2^2)\,d\sigma
-2\pi
\frac{\partial v}{\partial x_1}(0)
\frac{\partial \psi}{\partial x_1}(0)
+
2\pi
\frac{\partial v}{\partial x_2}(0)
\frac{\partial \psi}{\partial x_2}(0),
\end{aligned}
\end{equation}
and
\begin{equation}\label{eq:Pohozaev two}
\begin{aligned}
&\lambda\int_{B_1}e^{v}\psi
\left(
\frac{\partial V}{\partial x_1}x_2
+
\frac{\partial V}{\partial x_2}x_1
\right)\,dx =\\
&\quad
2\int_{\partial B_1}
\frac{\partial v}{\partial \nu}
\frac{\partial \psi}{\partial \nu}
x_1x_2\,d\sigma
-2\pi
\left[
\frac{\partial v}{\partial x_1}(0)
\frac{\partial \psi}{\partial x_2}(0)
+
\frac{\partial v}{\partial x_2}(0)
\frac{\partial \psi}{\partial x_1}(0)
\right].
\end{aligned}
\end{equation}
\end{lemma}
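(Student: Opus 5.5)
The plan is to obtain both identities from a Pohozaev-type computation with a \emph{singular conformal vector field}. The field is chosen so that the weight $|x|^2$ is absorbed and only $\nabla V$ survives in the bulk. Identifying $\mathbb{R}^2$ with $\mathbb{C}$, we take the vector field $X=(X_1,X_2)$ with $X_1+iX_2=1/z$, that is,
\[
X=\frac{(x_1,-x_2)}{|x|^2},
\]
for \eqref{eq:Pohozaev one}. For \eqref{eq:Pohozaev two} we take $X_1+iX_2=i/z$, that is,
\[
X=\frac{(x_2,x_1)}{|x|^2}.
\]
Both fields are holomorphic, hence conformal, on $\overline{B_1}\setminus\{0\}$. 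They have two key properties: $|x|^2X$ is a smooth, divergence-free, linear field, namely $(x_1,-x_2)$ or $(x_2,x_1)$; and on $\partial B_1$ one has $X\cdot\nu=x_1^2-x_2^2$, respectively $X\cdot\nu=2x_1x_2$.

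\textbf{Step 1: the excised identity.} Fix small $\delta>0$ and work on $\Omega_\delta=B_1\setminus B_\delta$. We start from the identity
\[
\int_{\Omega_\delta}\bigl[(X\cdot\nabla v)\Delta\psi+(X\cdot\nabla\psi)\Delta v\bigr]\,dx .
\]
Integrating by parts, the bulk terms take the form $\int \partial_iX_j\,(\partial_iv\,\partial_j\psi+\partial_jv\,\partial_i\psi)-\operatorname{div}X\,\nabla v\cdot\nabla\psi$. Since $X$ is conformal, the symmetric part of $DX$ equals $\tfrac12\operatorname{div}X\,I$, so these bulk terms cancel identically.

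What remains are boundary terms of the form
\[
\int_{\partial\Omega_\delta}\bigl[(X\cdot\nabla v)\partial_\nu\psi+(X\cdot\nabla\psi)\partial_\nu v-(X\cdot\nu)\nabla v\cdot\nabla\psi\bigr].
\]
On $\partial B_1$ we have $v=\psi=0$, so $\nabla v=\partial_\nu v\,\nu$ and $\nabla\psi=\partial_\nu\psi\,\nu$. The outer boundary term therefore reduces to
\[
\int_{\partial B_1}(X\cdot\nu)\,\partial_\nu v\,\partial_\nu\psi\,d\sigma,
\]
which gives the boundary integrals in \eqref{eq:Pohozaev one} and \eqref{eq:Pohozaev two}, with the factor $2$ in the second.

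\textbf{Step 2: the equations.} Using \eqref{eq:main-equation} and \eqref{eq:linearized-equation}, the left side becomes
\[
-\lambda\int_{\Omega_\delta}V|x|^2\bigl(e^v X\cdot\nabla\psi+e^v\psi\,X\cdot\nabla v\bigr)
=-\lambda\int_{\Omega_\delta}V\,(|x|^2X)\cdot\nabla(e^v\psi).
\]
We integrate by parts once more. The boundary contribution on $\partial B_1$ vanishes since $\psi=0$ there, and the contribution on $\partial B_\delta$ is $O(\delta)$ because $|x|^2X$ is bounded. Since $\operatorname{div}(|x|^2X)=0$, the bulk term equals
\[
\lambda\int e^v\psi\,\nabla V\cdot(|x|^2X),
\]
which is exactly $\lambda\int e^v\psi(\partial_1V x_1-\partial_2Vx_2)$, respectively $\lambda\int e^v\psi(\partial_1Vx_2+\partial_2Vx_1)$. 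This integrand is bounded, so letting $\delta\to0$ gives the left-hand sides.

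\textbf{Step 3: the inner circle (main computation).} On $\partial B_\delta$ we have $|X|=1/\delta$ while the arc length is $\delta\,d\theta$, so the inner boundary integral has a finite limit. Since $v$ and $\psi$ are smooth near $0$ by elliptic regularity, we may replace $\nabla v$ and $\nabla\psi$ by their values at $0$, with an $O(\delta)$ error. Write $a=\nabla v(0)$, $b=\nabla\psi(0)$ and $\nu=-(\cos\theta,\sin\theta)$ (outward from $\Omega_\delta$). The limit is then
\[
-\int_0^{2\pi}\bigl[(\tilde X\cdot a)(e_\theta\cdot b)+(\tilde X\cdot b)(e_\theta\cdot a)-(\tilde X\cdot e_\theta)(a\cdot b)\bigr]d\theta,
\]
where $e_\theta=(\cos\theta,\sin\theta)$ and $\tilde X=(\cos\theta,-\sin\theta)$, or $(\sin\theta,\cos\theta)$ in the second case.

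Averaging the trigonometric products, $\cos^2$ and $\sin^2$ each integrate to $\pi$ and $\cos\theta\sin\theta$ integrates to $0$. For the first field this gives $-\pi[2a_1b_1-2a_2b_2-0]=-2\pi a_1b_1+2\pi a_2b_2$; the term $\tilde X\cdot e_\theta=\cos2\theta$ integrates to zero. For the second field it similarly gives $-2\pi(a_1b_2+a_2b_1)$.

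This sign and constant bookkeeping is the step most prone to error, so we would double-check it on a test case such as $v$ and $\psi$ linear near $0$. Letting $\delta\to0$ then yields both identities.
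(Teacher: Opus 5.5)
Your proposal is correct and follows essentially the same route as the paper. You use the same singular fields $X=(x_1,-x_2)/|x|^2$ and $Y=(x_2,x_1)/|x|^2$ on $B_1\setminus B_\delta$; the bulk terms cancel, which you justify by conformality where the paper checks the derivatives of $X$ and $Y$ directly. You reduce the outer boundary term via $\nabla v=\partial_\nu v\,\nu$, use $\operatorname{div}(|x|^2X)=0$ on the right-hand side, and take the same inner-circle limit, and your signs and constants agree with the stated identities.
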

\begin{proof}
We prove the two identities by multiplying the equations with suitable test functions.
For $\varepsilon>0$ sufficiently small, set
\[
\Omega_\varepsilon:=B_1\setminus {B_\varepsilon}.
\]

We first prove \eqref{eq:Pohozaev one}. Define
\begin{equation}\label{eq:X(x)}
    X(x):=
\left(
\frac{x_1}{|x|^2},
-\frac{x_2}{|x|^2}
\right),
\quad x\neq0.
\end{equation}
Multiplying \eqref{eq:main-equation} by $X\cdot \nabla\psi$ and \eqref{eq:linearized-equation}
by $X\cdot \nabla v$ and integrating on $\Omega_{\varepsilon}$, we derive
\begin{equation}\label{eq:two equation time X dot nabla and sum}
\int_{\Omega_\varepsilon}
\left[
(-\Delta v)(X\cdot\nabla\psi)
+
(-\Delta\psi)(X\cdot\nabla v)
\right]\,dx
=
\lambda\int_{\Omega_\varepsilon}
V|x|^2e^v
\left[
X\cdot\nabla\psi
+
\psi X\cdot\nabla v
\right]\,dx .
\end{equation}
Integrating the left hand side by parts, we get
\begin{equation}\label{eq:int Omega e for Delta v}
    \begin{aligned}
\int_{\Omega_\varepsilon}
(-\Delta v)(X\cdot\nabla\psi)\,dx
&=
-\int_{\partial\Omega_\varepsilon}
\frac{\partial v}{\partial\nu}\, X\cdot\nabla\psi\,d\sigma
+
\int_{\Omega_\varepsilon}
\nabla v\cdot\nabla(X\cdot\nabla\psi)\,dx .
\end{aligned}
\end{equation}
Likewise,
\begin{equation}\label{eq:int Omega e for Delta psi}
\begin{aligned}
\int_{\Omega_\varepsilon}
(-\Delta\psi)(X\cdot\nabla v)\,dx
&=
-\int_{\partial\Omega_\varepsilon}
\frac{\partial\psi}{\partial\nu}\, X\cdot\nabla v\,d\sigma
+
\int_{\Omega_\varepsilon}
\nabla\psi\cdot\nabla(X\cdot\nabla v)\,dx .
\end{aligned}
\end{equation}
We proceed to expand the interior term. It holds that
\begin{equation*}
    \begin{aligned}
\nabla v\cdot\nabla(X\cdot\nabla\psi)
=
\frac{\partial v}{\partial x_1}
\frac{\partial}{\partial x_1}
\left(
X_1\frac{\partial\psi}{\partial x_1}
+
X_2\frac{\partial\psi}{\partial x_2}
\right)
+
\frac{\partial v}{\partial x_2}
\frac{\partial}{\partial x_2}
\left(
X_1\frac{\partial\psi}{\partial x_1}
+
X_2\frac{\partial\psi}{\partial x_2}
\right),
\end{aligned}
\end{equation*}
and
\[
\begin{aligned}
\nabla\psi\cdot\nabla(X\cdot\nabla v)=
\frac{\partial \psi}{\partial x_1}
\frac{\partial}{\partial x_1}
\left(
X_1\frac{\partial v}{\partial x_1}
+
X_2\frac{\partial v}{\partial x_2}
\right)
+
\frac{\partial \psi}{\partial x_2}
\frac{\partial}{\partial x_2}
\left(
X_1\frac{\partial v}{\partial x_1}
+
X_2\frac{\partial v}{\partial x_2}
\right).
\end{aligned}
\]
Therefore,
\begin{equation}\label{eq:expand nabla}
    \begin{aligned}
&\quad\nabla v\cdot\nabla(X\cdot\nabla\psi)
+
\nabla\psi\cdot\nabla(X\cdot\nabla v) \\
&=
2\frac{\partial X_1}{\partial x_1}
\frac{\partial v}{\partial x_1}
\frac{\partial\psi}{\partial x_1}+2\frac{\partial X_2}{\partial x_2}
\frac{\partial v}{\partial x_2}
\frac{\partial\psi}{\partial x_2}  
+
\left(
\frac{\partial X_2}{\partial x_1}
+
\frac{\partial X_1}{\partial x_2}
\right)
\left(
\frac{\partial v}{\partial x_1}
\frac{\partial\psi}{\partial x_2}
+
\frac{\partial v}{\partial x_2}
\frac{\partial\psi}{\partial x_1}
\right) \\
&\quad
+
X_1\frac{\partial}{\partial x_1}
\left(
\frac{\partial v}{\partial x_1}
\frac{\partial\psi}{\partial x_1}
+
\frac{\partial v}{\partial x_2}
\frac{\partial\psi}{\partial x_2}
\right) 
+
X_2\frac{\partial}{\partial x_2}
\left(
\frac{\partial v}{\partial x_1}
\frac{\partial\psi}{\partial x_1}
+
\frac{\partial v}{\partial x_2}
\frac{\partial\psi}{\partial x_2}
\right).
\end{aligned}
\end{equation}
Notice that
\[
\frac{\partial v}{\partial x_1}
\frac{\partial\psi}{\partial x_1}
+
\frac{\partial v}{\partial x_2}
\frac{\partial\psi}{\partial x_2}
=
\nabla v\cdot\nabla\psi .
\]
Hence,
\begin{equation}\label{eq:X dot nabla nabla v dot nabla psi}
\begin{aligned}
X_1\frac{\partial}{\partial x_1}
\left(
\frac{\partial v}{\partial x_1}
\frac{\partial\psi}{\partial x_1}
+
\frac{\partial v}{\partial x_2}
\frac{\partial\psi}{\partial x_2}
\right)
+
X_2\frac{\partial}{\partial x_2}
\left(
\frac{\partial v}{\partial x_1}
\frac{\partial\psi}{\partial x_1}
+
\frac{\partial v}{\partial x_2}
\frac{\partial\psi}{\partial x_2}
\right) =
X\cdot\nabla(\nabla v\cdot\nabla\psi).
\end{aligned}
\end{equation}
Integrating \eqref{eq:X dot nabla nabla v dot nabla psi} by parts on $\Omega_{\varepsilon}$, we get
\begin{equation}\label{eq: int Omega e for X dot nabla(nabla v dot nabla psi)}
\int_{\Omega_\varepsilon}
X\cdot\nabla(\nabla v\cdot\nabla\psi)\,dx
=
\int_{\partial\Omega_\varepsilon}
(\nabla v\cdot\nabla\psi)X\cdot\nu\,d\sigma
-\int_{\Omega_\varepsilon}
(\nabla v\cdot\nabla\psi)\operatorname{div}X\,dx .
\end{equation}

By \eqref{eq:int Omega e for Delta v}, \eqref{eq:int Omega e for Delta psi}, \eqref{eq:expand nabla} and \eqref{eq: int Omega e for X dot nabla(nabla v dot nabla psi)}, we obtain
\begin{equation}\label{int Delta v + Delta psi}
\begin{aligned}
&\quad\quad\int_{\Omega_\varepsilon}
\left[
(-\Delta v)(X\cdot\nabla\psi)
+
(-\Delta\psi)(X\cdot\nabla v)
\right]\,dx \\
&=
-\int_{\partial\Omega_\varepsilon}
\frac{\partial v}{\partial\nu}\, X\cdot\nabla\psi\,d\sigma
-\int_{\partial\Omega_\varepsilon}
\frac{\partial\psi}{\partial\nu}\, X\cdot\nabla v\,d\sigma \\
&\quad
+
\int_{\partial\Omega_\varepsilon}
(\nabla v\cdot\nabla\psi)X\cdot\nu\,d\sigma \\
&\quad
+
\int_{\Omega_\varepsilon}
\bigg[
2\frac{\partial X_1}{\partial x_1}
\frac{\partial v}{\partial x_1}
\frac{\partial\psi}{\partial x_1} 
+
\left(
\frac{\partial X_2}{\partial x_1}
+
\frac{\partial X_1}{\partial x_2}
\right)
\left(
\frac{\partial v}{\partial x_1}
\frac{\partial\psi}{\partial x_2}
+
\frac{\partial v}{\partial x_2}
\frac{\partial\psi}{\partial x_1}
\right) \\
&\qquad
+
2\frac{\partial X_2}{\partial x_2}
\frac{\partial v}{\partial x_2}
\frac{\partial\psi}{\partial x_2} 
-
\left(
\frac{\partial X_1}{\partial x_1}
+
\frac{\partial X_2}{\partial x_2}
\right)
\left(
\frac{\partial v}{\partial x_1}
\frac{\partial\psi}{\partial x_1}
+
\frac{\partial v}{\partial x_2}
\frac{\partial\psi}{\partial x_2}
\right)
\bigg]\,dx .
\end{aligned}
\end{equation}

Recall that $X(x)$ is defined in \eqref{eq:X(x)}, a direct computation shows that
\begin{equation}\label{partian X1 and X2}
\frac{\partial X_1}{\partial x_1}
=
\frac{\partial X_2}{\partial x_2}
=
\frac{x_2^2-x_1^2}{|x|^4},
\quad
\frac{\partial X_2}{\partial x_1}
=
-\frac{\partial X_1}{\partial x_2}
=
\frac{2x_1x_2}{|x|^4}.
\end{equation}
Substituting \eqref{partian X1 and X2} into \eqref{int Delta v + Delta psi}, we have
\begin{equation}\label{simplify int Delta v + Delta psi}
\begin{aligned}
&\int_{\Omega_\varepsilon}
\left[
(-\Delta v)(X\cdot\nabla\psi)
+
(-\Delta\psi)(X\cdot\nabla v)
\right]\,dx  \\
&=
-\int_{\partial\Omega_\varepsilon}
\frac{\partial v}{\partial\nu}\,X\cdot\nabla\psi\,d\sigma
-\int_{\partial\Omega_\varepsilon}
\frac{\partial\psi}{\partial\nu}\,X\cdot\nabla v\,d\sigma
+\int_{\partial\Omega_\varepsilon}
(\nabla v\cdot\nabla\psi)\,X\cdot\nu\,d\sigma .
\end{aligned}
\end{equation}
Next let us examine separately the boundary integrals over $\partial B_{1}$ and over $\partial B_{\varepsilon}$: taking into account that the homogeneous boundary conditions in problems \eqref{eq:main-equation} and \eqref{eq:linearized-equation}, we have that 
$$\nabla v=\frac{\partial v}{\partial\nu}\nu=\frac{\partial v}{\partial\nu}x \quad \hbox{and} \quad \nabla \psi=\frac{\partial \psi}{\partial\nu}\nu=\frac{\partial \psi}{\partial\nu}x \quad \hbox{on} \quad \partial B_{1},$$ which implies
\begin{align*}
&\quad -\int_{\partial B_1}
\frac{\partial v}{\partial \nu}\,X\cdot\nabla\psi\,d\sigma
-\int_{\partial B_1}
\frac{\partial \psi}{\partial \nu}\,X\cdot\nabla v\,d\sigma
+\int_{\partial B_1}
(\nabla v\cdot\nabla\psi)\,X\cdot\nu\,d\sigma  \\
&=
-\int_{\partial B_1}
\frac{\partial v}{\partial \nu}
\frac{\partial \psi}{\partial \nu}
X\cdot\nu\,d\sigma
-\int_{\partial B_1}
\frac{\partial \psi}{\partial \nu}
\frac{\partial v}{\partial \nu}
X\cdot\nu\,d\sigma
+\int_{\partial B_1}
\frac{\partial v}{\partial \nu}
\frac{\partial \psi}{\partial \nu}
X\cdot\nu\,d\sigma  \\
&=
-\int_{\partial B_1}
\frac{\partial v}{\partial \nu}
\frac{\partial \psi}{\partial \nu}
X\cdot\nu\,d\sigma  \\
&=
-\int_{\partial B_1}
\frac{\partial v}{\partial \nu}
\frac{\partial \psi}{\partial \nu}
(x_1^2-x_2^2)\,d\sigma .
\end{align*}
On the other hand, since $\nu=-\frac{x}{\varepsilon}$ on $\partial B_{\varepsilon}$,  we deduce
\begin{align*}
&\quad-\int_{\partial B_\varepsilon}
\frac{\partial v}{\partial \nu}X\cdot\nabla\psi\,d\sigma
-\int_{\partial B_\varepsilon}
\frac{\partial \psi}{\partial \nu}X\cdot\nabla v\,d\sigma
+\int_{\partial B_\varepsilon}
(\nabla v\cdot\nabla\psi)X\cdot\nu\,d\sigma  \\
&=
\frac{1}{\varepsilon^3}
\int_{\partial B_\varepsilon}
\bigg[
\left(
\frac{\partial v}{\partial x_1}x_1
+
\frac{\partial v}{\partial x_2}x_2
\right)
\left(
\frac{\partial \psi}{\partial x_1}x_1
-
\frac{\partial \psi}{\partial x_2}x_2
\right)  \\
&\qquad\qquad
+
\left(
\frac{\partial \psi}{\partial x_1}x_1
+
\frac{\partial \psi}{\partial x_2}x_2
\right)
\left(
\frac{\partial v}{\partial x_1}x_1
-
\frac{\partial v}{\partial x_2}x_2
\right)  \\
&\qquad\qquad
-
\left(
\frac{\partial v}{\partial x_1}
\frac{\partial \psi}{\partial x_1}
+
\frac{\partial v}{\partial x_2}
\frac{\partial \psi}{\partial x_2}
\right)
(x_1^2-x_2^2)
\bigg]\,d\sigma  \\
&=
\frac{1}{\varepsilon}
\int_{\partial B_\varepsilon}
\left(
\frac{\partial v}{\partial x_1}
\frac{\partial \psi}{\partial x_1}
-
\frac{\partial v}{\partial x_2}
\frac{\partial \psi}{\partial x_2}
\right)\,d\sigma  \\
&=
2\pi
\frac{\partial v}{\partial x_1}(0)
\frac{\partial \psi}{\partial x_1}(0)
-
2\pi
\frac{\partial v}{\partial x_2}(0)
\frac{\partial \psi}{\partial x_2}(0)
+o(1),
\end{align*}
as $\varepsilon\rightarrow0^{+}$. 

By inserting the above two boundary estimates into \eqref{simplify int Delta v + Delta psi}, we derive
\begin{equation}\label{simplify 2 int Delta v + Delta psi}
    \begin{aligned}
&\quad\int_{\Omega_\varepsilon}
\left[
(-\Delta v)(X\cdot\nabla\psi)
+
(-\Delta\psi)(X\cdot\nabla v)
\right]\,dx \\
&=
-\int_{\partial B_1}
\frac{\partial v}{\partial \nu}
\frac{\partial \psi}{\partial \nu}
(x_1^2-x_2^2)\,d\sigma
+
2\pi
\frac{\partial v}{\partial x_1}(0)
\frac{\partial \psi}{\partial x_1}(0)
-
2\pi
\frac{\partial v}{\partial x_2}(0)
\frac{\partial \psi}{\partial x_2}(0)
+o(1),
\end{aligned}
\end{equation}
as $\varepsilon\rightarrow0^{+}$.

We are now in a position to examine the right hand side of \eqref{eq:two equation time X dot nabla and sum}: Integrating by parts we obtain
\begin{equation}\label{eq:right simplify}
\begin{aligned}
&\quad\quad \lambda\int_{B_1} V|x|^2e^v
\left[X\cdot\nabla\psi+\psi X\cdot\nabla v\right]\,dx  \\
&\quad
= \lambda\int_{B_1} V|x|^2 X\cdot\nabla(e^v\psi)\,dx  \\
&\quad
= \lambda\int_{\partial B_1} V|x|^2e^v\psi\,X\cdot\nu\,d\sigma
-\lambda\int_{B_1} e^v\psi\,\operatorname{div}\!\left(V|x|^2X\right)\,dx  \\
&\quad
= -\lambda\int_{B_1} e^v\psi
\left(
\frac{\partial V}{\partial x_1}x_1
-
\frac{\partial V}{\partial x_2}x_2
\right)\,dx,
\end{aligned}
\end{equation}
where we have used the homogeneous boundary condition $\psi=0$ on $\partial B_{1}$. Substituting \eqref{simplify 2 int Delta v + Delta psi} and \eqref{eq:right simplify} into \eqref{eq:two equation time X dot nabla and sum} and letting $\varepsilon\rightarrow0$, we arrive at \eqref{eq:Pohozaev one}.

Similarly, we define
\begin{equation}\label{eq:Y(x)}
    Y(x):=
\left(
\frac{x_2}{|x|^2},
\frac{x_1}{|x|^2}
\right),
\quad x\neq0.
\end{equation}
Multiplying \eqref{eq:main-equation} by $Y\cdot \nabla\psi$ and \eqref{eq:linearized-equation}
by $Y\cdot \nabla v$ and integrating on $\Omega_{\varepsilon}$, we deduce
\begin{equation}\label{eq:two equation time Y dot nabla and sum}
\int_{\Omega_\varepsilon}
\left[
(-\Delta v)(Y\cdot\nabla\psi)
+
(-\Delta\psi)(Y\cdot\nabla v)
\right]\,dx
=
\lambda\int_{\Omega_\varepsilon}
V|x|^2e^v
\left[
Y\cdot\nabla\psi
+
\psi Y\cdot\nabla v
\right]\,dx .
\end{equation}
After integrating the left hand side by parts, we obtain
\begin{equation}\label{eq:int Omega e for Delta v Y}
\int_{\Omega_\varepsilon}
(-\Delta v)(Y\cdot\nabla\psi)\,dx
=
-\int_{\partial\Omega_\varepsilon}
\frac{\partial v}{\partial\nu}\, Y\cdot\nabla\psi\,d\sigma
+
\int_{\Omega_\varepsilon}
\nabla v\cdot\nabla(Y\cdot\nabla\psi)\,dx,
\end{equation}
and
\begin{equation}\label{eq:int Omega e for Delta psi Y}
\int_{\Omega_\varepsilon}
(-\Delta\psi)(Y\cdot\nabla v)\,dx
=
-\int_{\partial\Omega_\varepsilon}
\frac{\partial\psi}{\partial\nu}\, Y\cdot\nabla v\,d\sigma
+
\int_{\Omega_\varepsilon}
\nabla\psi\cdot\nabla(Y\cdot\nabla v)\,dx .
\end{equation}
Expanding the interior term leads to
\begin{equation*}
    \begin{aligned}
\nabla v\cdot\nabla(Y\cdot\nabla\psi)
=
\frac{\partial v}{\partial x_1}
\frac{\partial}{\partial x_1}
\left(
Y_1\frac{\partial\psi}{\partial x_1}
+
Y_2\frac{\partial\psi}{\partial x_2}
\right)
+
\frac{\partial v}{\partial x_2}
\frac{\partial}{\partial x_2}
\left(
Y_1\frac{\partial\psi}{\partial x_1}
+
Y_2\frac{\partial\psi}{\partial x_2}
\right)
\end{aligned}
\end{equation*}
and
\[
\begin{aligned}
\nabla\psi\cdot\nabla(Y\cdot\nabla v)
=
\frac{\partial \psi}{\partial x_1}
\frac{\partial}{\partial x_1}
\left(
Y_1\frac{\partial v}{\partial x_1}
+
Y_2\frac{\partial v}{\partial x_2}
\right)
+
\frac{\partial \psi}{\partial x_2}
\frac{\partial}{\partial x_2}
\left(
Y_1\frac{\partial v}{\partial x_1}
+
Y_2\frac{\partial v}{\partial x_2}
\right).
\end{aligned}
\]
Arguing as in the derivation of \eqref{eq:expand nabla} and \eqref{eq:X dot nabla nabla v dot nabla psi}, we get
\begin{equation}\label{eq:Y dot nabla nabla v dot nabla psi}
\begin{aligned}
Y_1\frac{\partial}{\partial x_1}
\left(
\frac{\partial v}{\partial x_1}
\frac{\partial\psi}{\partial x_1}
+
\frac{\partial v}{\partial x_2}
\frac{\partial\psi}{\partial x_2}
\right)
+
Y_2\frac{\partial}{\partial x_2}
\left(
\frac{\partial v}{\partial x_1}
\frac{\partial\psi}{\partial x_1}
+
\frac{\partial v}{\partial x_2}
\frac{\partial\psi}{\partial x_2}
\right) =
Y\cdot\nabla(\nabla v\cdot\nabla\psi).
\end{aligned}
\end{equation}
Integrating \eqref{eq:Y dot nabla nabla v dot nabla psi} by parts on $\Omega_{\varepsilon}$, we obtain
\begin{equation}\label{eq:int Omega e for Y dot nabla nabla v dot nabla psi}
\int_{\Omega_\varepsilon}
Y\cdot\nabla(\nabla v\cdot\nabla\psi)\,dx
=
\int_{\partial\Omega_\varepsilon}
(\nabla v\cdot\nabla\psi)Y\cdot\nu\,d\sigma
-\int_{\Omega_\varepsilon}
(\nabla v\cdot\nabla\psi)\operatorname{div}Y\,dx .
\end{equation}
In view of \eqref{eq:int Omega e for Delta v Y}, \eqref{eq:int Omega e for Delta psi Y} and \eqref{eq:int Omega e for Y dot nabla nabla v dot nabla psi}, we derive
\begin{equation}\label{int Delta v + Delta psi Y}
\begin{aligned}
&\quad \int_{\Omega_\varepsilon}
\left[
(-\Delta v)(Y\cdot\nabla\psi)
+
(-\Delta\psi)(Y\cdot\nabla v)
\right]\,dx \\
&=
-\int_{\partial\Omega_\varepsilon}
\frac{\partial v}{\partial\nu}\, Y\cdot\nabla\psi\,d\sigma
-\int_{\partial\Omega_\varepsilon}
\frac{\partial\psi}{\partial\nu}\, Y\cdot\nabla v\,d\sigma \\
&\quad
+
\int_{\partial\Omega_\varepsilon}
(\nabla v\cdot\nabla\psi)Y\cdot\nu\,d\sigma 
+
\int_{\Omega_\varepsilon}
\bigg[
2\frac{\partial Y_1}{\partial x_1}
\frac{\partial v}{\partial x_1}
\frac{\partial\psi}{\partial x_1} \\
&\qquad
+
\left(
\frac{\partial Y_2}{\partial x_1}
+
\frac{\partial Y_1}{\partial x_2}
\right)
\left(
\frac{\partial v}{\partial x_1}
\frac{\partial\psi}{\partial x_2}
+
\frac{\partial v}{\partial x_2}
\frac{\partial\psi}{\partial x_1}
\right) 
+
2\frac{\partial Y_2}{\partial x_2}
\frac{\partial v}{\partial x_2}
\frac{\partial\psi}{\partial x_2} \\
&\qquad
-
\left(
\frac{\partial Y_1}{\partial x_1}
+
\frac{\partial Y_2}{\partial x_2}
\right)
\left(
\frac{\partial v}{\partial x_1}
\frac{\partial\psi}{\partial x_1}
+
\frac{\partial v}{\partial x_2}
\frac{\partial\psi}{\partial x_2}
\right)
\bigg]\,dx .
\end{aligned}
\end{equation}

Recall that $Y(x)$ defined in \eqref{eq:Y(x)}, it holds that
\begin{equation}\label{partian Y1 and Y2}
\frac{\partial Y_1}{\partial x_1}
=
\frac{\partial Y_2}{\partial x_2}
=
-\frac{2x_{1}x_{2}}{|x|^4},
\quad
\frac{\partial Y_2}{\partial x_1}
=
-\frac{\partial Y_1}{\partial x_2}
=
\frac{x_{2}^{2}-x_{1}^{2}}{|x|^4}.
\end{equation}
Substituting \eqref{partian Y1 and Y2} into \eqref{int Delta v + Delta psi Y}, we have
\begin{equation}\label{simplify int Delta v + Delta psi Y}
\begin{aligned}
&\quad \int_{\Omega_\varepsilon}
\left[
(-\Delta v)(Y\cdot\nabla\psi)
+
(-\Delta\psi)(Y\cdot\nabla v)
\right]\,dx  \\
&=
-\int_{\partial\Omega_\varepsilon}
\frac{\partial v}{\partial\nu}\,Y\cdot\nabla\psi\,d\sigma
-\int_{\partial\Omega_\varepsilon}
\frac{\partial\psi}{\partial\nu}\,Y\cdot\nabla v\,d\sigma
+\int_{\partial\Omega_\varepsilon}
(\nabla v\cdot\nabla\psi)\,Y\cdot\nu\,d\sigma.
\end{aligned}
\end{equation}
Now let us focus on the boundary terms: the homogeneous boundary conditions in \eqref{eq:main-equation} and \eqref{eq:linearized-equation} yields 
$$\nabla v=\frac{\partial v}{\partial\nu}\nu=\frac{\partial v}{\partial\nu}x \quad \hbox{and} \quad \nabla \psi=\frac{\partial \psi}{\partial\nu}\nu=\frac{\partial \psi}{\partial\nu}x\quad \hbox{on}\quad \partial B_{1},$$ which implies
\begin{equation*}
\begin{aligned}
&\quad -\int_{\partial B_1}
\frac{\partial v}{\partial \nu}\,Y\cdot\nabla\psi\,d\sigma
-\int_{\partial B_1}
\frac{\partial \psi}{\partial \nu}\,Y\cdot\nabla v\,d\sigma
+\int_{\partial B_1}
(\nabla v\cdot\nabla\psi)\,Y\cdot\nu\,d\sigma  \\
&=
-\int_{\partial B_1}
\frac{\partial v}{\partial \nu}
\frac{\partial \psi}{\partial \nu}
Y\cdot\nu\,d\sigma
-\int_{\partial B_1}
\frac{\partial \psi}{\partial \nu}
\frac{\partial v}{\partial \nu}
Y\cdot\nu\,d\sigma
+\int_{\partial B_1}
\frac{\partial v}{\partial \nu}
\frac{\partial \psi}{\partial \nu}
Y\cdot\nu\,d\sigma  \\
&=
-\int_{\partial B_1}
\frac{\partial v}{\partial \nu}
\frac{\partial \psi}{\partial \nu}
Y\cdot\nu\,d\sigma  \\
&=
-2\int_{\partial B_1}
\frac{\partial v}{\partial \nu}
\frac{\partial \psi}{\partial \nu}
x_{1}x_{2}\,d\sigma .
\end{aligned}
\end{equation*}
Since $\nu=-\frac{x}{\varepsilon}$ on $\partial B_{\varepsilon}$, a direct computation gives
\[
\begin{aligned}
&\quad -\int_{\partial B_\varepsilon}
\frac{\partial v}{\partial \nu}Y\cdot\nabla\psi\,d\sigma
-\int_{\partial B_\varepsilon}
\frac{\partial \psi}{\partial \nu}Y\cdot\nabla v\,d\sigma
+\int_{\partial B_\varepsilon}
(\nabla v\cdot\nabla\psi)Y\cdot\nu\,d\sigma  \\
&=
\frac{1}{\varepsilon^3}
\int_{\partial B_\varepsilon}
\bigg[
\left(
\frac{\partial v}{\partial x_1}x_1
+
\frac{\partial v}{\partial x_2}x_2
\right)
\left(
\frac{\partial \psi}{\partial x_1}x_2
+
\frac{\partial \psi}{\partial x_2}x_1
\right)  \\
&\quad
+
\left(
\frac{\partial \psi}{\partial x_1}x_1
+
\frac{\partial \psi}{\partial x_2}x_2
\right)
\left(
\frac{\partial v}{\partial x_1}x_2
+
\frac{\partial v}{\partial x_2}x_1
\right) 
-
\left(
\frac{\partial v}{\partial x_1}
\frac{\partial \psi}{\partial x_1}
+
\frac{\partial v}{\partial x_2}
\frac{\partial \psi}{\partial x_2}
\right)
2x_{1}x_{2}
\bigg]\,d\sigma  \\
&=
\frac{1}{\varepsilon}
\int_{\partial B_\varepsilon}
\left(
\frac{\partial v}{\partial x_1}
\frac{\partial \psi}{\partial x_2}
+
\frac{\partial v}{\partial x_2}
\frac{\partial \psi}{\partial x_1}
\right)\,d\sigma  \\
&=
2\pi
\frac{\partial v}{\partial x_1}(0)
\frac{\partial \psi}{\partial x_2}(0)
+
2\pi
\frac{\partial v}{\partial x_2}(0)
\frac{\partial \psi}{\partial x_1}(0)
+o(1),
\end{aligned}
\]
as $\varepsilon\rightarrow0^{+}$. 

By inserting the above two boundary estimates into \eqref{simplify int Delta v + Delta psi Y}, we derive
\begin{equation}\label{simplify 2 int Delta v + Delta psi Y}
    \begin{aligned}
&\quad\int_{\Omega_\varepsilon}
\left[
(-\Delta v)(Y\cdot\nabla\psi)
+
(-\Delta\psi)(Y\cdot\nabla v)
\right]\,dx \\
&=
-2\int_{\partial B_1}
\frac{\partial v}{\partial \nu}
\frac{\partial \psi}{\partial \nu}
x_{1}x_{2}\,d\sigma
+
2\pi
\frac{\partial v}{\partial x_1}(0)
\frac{\partial \psi}{\partial x_2}(0)
+
2\pi
\frac{\partial v}{\partial x_2}(0)
\frac{\partial \psi}{\partial x_1}(0)
+o(1),
\end{aligned}
\end{equation}
as $\varepsilon\rightarrow0^{+}$.

We proceed to estimate the right hand side of \eqref{eq:two equation time Y dot nabla and sum} as in \eqref{eq:right simplify} and obtain 
\begin{equation}\label{eq:right simplify Y}
\begin{aligned}
&\quad\quad \lambda\int_{B_1} V|x|^2e^v
\left[Y\cdot\nabla\psi+\psi Y\cdot\nabla v\right]\,dx  \\
&\quad
= \lambda\int_{B_1} V|x|^2 Y\cdot\nabla(e^v\psi)\,dx  \\
&\quad
= \lambda\int_{\partial B_1} V|x|^2e^v\psi\,Y\cdot\nu\,d\sigma
-\lambda\int_{B_1} e^v\psi\,\operatorname{div}\!\left(V|x|^2Y\right)\,dx  \\
&\quad
= -\lambda\int_{B_1} e^v\psi
\left(
\frac{\partial V}{\partial x_1}x_2
+
\frac{\partial V}{\partial x_2}x_1
\right)\,dx,
\end{aligned}
\end{equation}
where we have used the homogeneous boundary condition $\psi=0$ on $\partial B_{1}$. Substituting \eqref{simplify 2 int Delta v + Delta psi Y} and \eqref{eq:right simplify Y} into \eqref{eq:two equation time Y dot nabla and sum} and letting $\varepsilon\rightarrow0$, we conclude \eqref{eq:Pohozaev two}.
\end{proof}

We now prove Theorem \ref{thm nondegenerate}, arguing by contradiction. Suppose that there are $m\rightarrow+\infty$, satisfying $\|\psi_{m}\|=1$, and
\begin{equation*}
    \widehat{L}_{{m}}\psi_{m}=0.
\end{equation*}
Let
\begin{equation}\label{phi tuta m(y) and rho m}
    \widetilde{\psi}_{m}(y)=\psi_{m}(\rho_{m}y),
\end{equation}
where $\rho_m$ is defined in Theorem A.
\begin{lemma}\label{lem:the convergence of psi m tuta}
It holds
\begin{equation*}
\widetilde\psi_m
\longrightarrow
a_0Z_0+a_1Z_1+a_2Z_2,
\end{equation*}
uniformly in $C^1(B_R(0))$ for any $R>0$, where $a_{0}$, $a_1$ and $a_2$ are some constants and $Z_0$, $Z_1$, $Z_2$ are functions defined in \eqref{eq:Z012} with $c=\xi\in\mathbb{R}^{2}$.
\end{lemma}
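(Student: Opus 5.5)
The plan is a standard blow-up and compactness argument in the rescaled variable $y=x/\rho_m$, followed by an appeal to Lemma \ref{lem:nondegeneracy of finite mass solutions of the singular Liouville equation}.

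\textbf{Rescaled equation.} First we write the equation satisfied by $\widetilde\psi_m$ on $B_{1/\rho_m}$:
\[
-\Delta\widetilde\psi_m=\rho_m^2\lambda_m V(\rho_m y)\rho_m^2|y|^2e^{v_m(\rho_m y)}\widetilde\psi_m=:K_m(y)\widetilde\psi_m .
\]
Using \eqref{eq:vm=PWm+phi m} and \eqref{eq:Pw expand}, together with $\mathcal H(x^2,b_m)=O(|x|^2|b_m|+|b_m|)$ near $0$, we have
\[
\lambda_m e^{PW_m(\rho_m y)}=\frac{\lambda_m}{\rho_m^8\left(1+|y^2-\xi_m|^2\right)^2}\,(1+o(1)).
\]
Since $\rho_m^4=\lambda_m/32$, this gives
\[
K_m(y)=\frac{32|y|^2}{\left(1+|y^2-\xi_m|^2\right)^2}\,V(\rho_m y)\,e^{\phi_m(\rho_m y)}\,(1+o(1)).
\]

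\textbf{Control of $e^{\phi_m}$ and local convergence.} Because $\|\phi_m\|\le\lambda_m^{1/2-\varepsilon}\to0$, Lemma \ref{lem:M-T inequality} gives $\|e^{|\phi_m|}\|_q\le C_q$. A local $L^\infty$ bound is not immediate, since $\phi_m$ is only in $H^1_0$. To obtain it, we use the equation for $\phi_m$: its right-hand side $\lambda_m|x|^2V(e^{PW_m+\phi_m}-e^{PW_m})$ plus the error term has small $L^p$ norm in the rescaled variable, by Lemma \ref{lem:zhangLp} combined with Hölder's inequality. Elliptic estimates in $y$ on $B_{2R}$ then give $\widetilde\phi_m(y):=\phi_m(\rho_m y)\to0$ in $L^\infty(B_{2R})$. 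This is the step I expect to be most delicate; at minimum, $K_m$ is bounded in $L^p(B_{2R})$ for some $p>1$, and $K_m\to K:=32|y|^2(1+|y^2-\xi|^2)^{-2}$ in $L^p_{loc}$ because $\xi_m\to\xi$.

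The Dirichlet energy is scale invariant in two dimensions, so $\int|\nabla\widetilde\psi_m|^2=\|\psi_m\|^2=1$. By Moser--Trudinger, $\widetilde\psi_m$ is bounded in $L^q(B_{2R})$ for every $q$, after subtracting a mean we show is bounded. For the mean, we note that $\psi_m$ is bounded in $L^q(B_1)$ and that $\widetilde\psi_m$ solves an equation with bounded $L^p$ coefficient. Interior $W^{2,p}$ estimates then bound $\widetilde\psi_m$ in $W^{2,p}(B_R)\hookrightarrow C^{1,\alpha}(B_R)$, where the $L^\infty$ bound comes from local elliptic estimates using $L^2$ control of $\widetilde\psi_m$ on $B_{2R}$. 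That $L^2$ control in turn comes from the bounded gradient together with a mean bounded via Poincaré on annuli, or via the representation formula $\psi_m(x)=\int G(x,z)\lambda_m V|z|^2e^{v_m}\psi_m\,dz$. Combining these with Hölder and Lemma \ref{lem:zhangLp} yields $\|\psi_m\|_\infty\le C$, which I regard as the cleanest route.

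\textbf{Identification of the limit.} A diagonal argument extracts a subsequence with $\widetilde\psi_m\to\widetilde\psi$ in $C^1_{loc}(\mathbb R^2)$. The limit solves $-\Delta\widetilde\psi=K\widetilde\psi$ with $\int_{\mathbb R^2}|\nabla\widetilde\psi|^2\le1$ by Fatou's lemma. Lemma \ref{lem:nondegeneracy of finite mass solutions of the singular Liouville equation} with $c=\xi$ then gives $\widetilde\psi=a_0Z_0+a_1Z_1+a_2Z_2$, which is the claimed convergence along the subsequence.
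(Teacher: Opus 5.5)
There is a genuine gap, and it is not in your Step 2 but in the next one. The rescaling of the coefficient and the final identification of the limit are fine. What you never obtain is a bound for $\widetilde\psi_m$ in $L^2(B_{2R})$, that is, a bound on its mean.

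The scale-invariant bound $\int|\nabla\widetilde\psi_m|^2\le 1$ controls $\widetilde\psi_m$ only up to an additive constant. For $H^1_0$-normalized functions that constant can really be of size $\sqrt{|\log\rho_m|}$. Take $u=\log(1/|x|)/\sqrt{2\pi\log(1/\delta)}$ for $\delta\le|x|\le1$, constant on $B_\delta$, with $\delta=R\rho_m$. Then $\|u\|=1$ and $u=\sqrt{\log(1/\delta)/(2\pi)}$ on $B_\delta$. None of your proposed fixes excludes this:
\begin{itemize}
\item The bound $\|\psi_m\|_{L^q(B_1)}\le C$ only rescales to $\|\widetilde\psi_m\|_{L^q(B_{2R})}\le C\rho_m^{-2/q}$.
\item Poincar\'e on annuli controls oscillations, not means. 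Chaining dyadic annuli from $\partial B_1$ down to radius $\rho_m$ gives, by Cauchy--Schwarz, a mean bound of order $\sqrt{|\log\rho_m|}$ only.
\item In the Green representation, H\"older with Lemma~\ref{lem:zhangLp} at any exponent $p>1$ brings in the factor $\lambda_m^{-(p-1)/(2p)}\to\infty$, while $p=1$ needs $\|\psi_m\|_\infty$, which is circular. Even granting that $\int\lambda_m V|z|^2e^{v_m}|\psi_m|\,dz$ is bounded, the logarithmic kernel at scale $\rho_m$ leaves a factor $|\log\rho_m|$.
\end{itemize}
So $\|\psi_m\|_\infty\le C$ does not follow from the ingredients you cite.

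Compare $\phi_m$. There $\|\widetilde\phi_m\|_{L^2(B_{2R})}\le C\rho_m^{-2/s}\|\phi_m\|\to0$, because $\|\phi_m\|\le\lambda_m^{1/2-\varepsilon}$ beats the scaling loss. This quantitative smallness, in the form $\|e^{\phi_m}-1\|_r\le C\|\phi_m\|$ rather than just $\|e^{|\phi_m|}\|_q\le C_q$, is also what your Step 2 actually needs. For $\psi_m$, with $\|\psi_m\|=1$, nothing compensates the loss.

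The missing ingredient is the energy identity from testing the linearized equation with $\psi_m$:
$\int_{B_1}\lambda_m V|x|^2e^{v_m}\psi_m^2\,dx=\|\psi_m\|^2=1$, i.e.\ $\int_{B_{1/\rho_m}}K_m\widetilde\psi_m^2\,dy=1$.
Your Step 2 gives $K_m\to K$ locally uniformly, and $K>0$ away from the origin. Hence $K_m\ge c_R>0$ on the annulus $A_R=\{R/2<|y|<R\}$ for large $m$, which gives $\|\widetilde\psi_m\|_{L^2(A_R)}\le c_R^{-1/2}$. The Poincar\'e-type inequality $\|u\|_{L^2(B_R)}\le C_R\bigl(\|\nabla u\|_{L^2(B_R)}+\|u\|_{L^2(A_R)}\bigr)$, proved by a standard compactness argument, then bounds $\widetilde\psi_m$ in $H^1(B_R)$.

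With this bound in place, the rest of your argument goes through: the $W^{2,p}$/Morrey bootstrap, the diagonal extraction, and the appeal to the nondegeneracy lemma with $c=\xi$. This is exactly the route the paper takes.
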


\begin{proof}
We first consider the scaling of $v_m$. Define
\[
\widetilde v_m(y)
:=
v_m(\rho_m y)+2\log\lambda_m-\log 32.
\]
We claim that
\[
\widetilde v_m
\to
U_{\xi}
\quad\text{in }C^1_{\mathrm{loc}}(\mathbb R^2),
\]
where
\[
U_{\xi}(y)
=
\log
\frac{32}
{\left(1+|y^2-\xi|^2\right)^2}.
\]

Indeed, in view of \eqref{eq:vm=PWm+phi m}, we have
\[
\widetilde v_m(y)
=
P W_m(\rho_m y)+2\log\lambda_m-\log 32
+
\phi_m(\rho_m y).
\]
Next we show that
\[
\phi_m(\rho_m y)\to0
\quad\text{in }C^1_{\mathrm{loc}}(\mathbb R^2).
\]
Set
\[
\widetilde\phi_m(y):=\phi_m(\rho_m y).
\]
For every fixed $R>0$, by the change of variables $x=\rho_m y$, we derive
\[
\int_{B_R}|\nabla_y\widetilde\phi_m|^2\,dy
=
\int_{B_{\rho_m R}}|\nabla_x\phi_m|^2\,dx
\le
\|\phi_m\|^2.
\]
It follows from \eqref{eq:phim H01 estimate} that
\begin{equation}\label{eq:nabla 2 norm to 0}
\|\nabla\widetilde\phi_m\|_{L^2(B_R)}
\le
C\lambda_m^{\frac12-\varepsilon}
\to0.
\end{equation}
A direct computation yields
\[
\|\widetilde\phi_m\|_{L^2(B_R)}
=
\rho_m^{-1}
\|\phi_m\|_{L^2(B_{\rho_m R})}\le
C\rho_m^{-1}\|\phi_m\|_{H_0^1(B_1)},
\]
which implies
\begin{equation}\label{eq:2 norm to 0}
\|\widetilde\phi_m\|_{L^2(B_R)}
\le
C\lambda_m^{-1/4}\lambda_m^{\frac12-\varepsilon}
=
C\lambda_m^{\frac14-\varepsilon}
\to0,
\end{equation}
for $\varepsilon<1/4$. A combination of \eqref{eq:nabla 2 norm to 0} and \eqref{eq:2 norm to 0} leads to
\[
\widetilde\phi_m\to0
\quad\text{strongly in }H^1(B_R).
\]

Next, since $v_m=P W_m+\phi_m$ is a solution of \eqref{eq:main-equation} and $$-\Delta P W_m=|x|^2e^{W_m},$$ while the correction term $\phi_m$ satisfies
\begin{equation}\label{eq:correction term phi m equation}
    -\Delta\phi_m
=
\lambda_m V(x)|x|^2e^{P W_m}
\left(e^{\phi_m}-1\right)
+
\left[
\lambda_m V(x)|x|^2e^{P W_m}
-
|x|^2e^{W_m}
\right].
\end{equation}
After the scaling $x=\rho_m y$ and using the definition of $\rho_m$,
we obtain
\begin{equation}\label{eq:Delta tilde phi m}
-\Delta_y\widetilde\phi_m
=
\mathcal{A}_m(y)\left(e^{\widetilde\phi_m}-1\right)
+
D_m(y),
\end{equation}
where
\[
\mathcal{A}_m(y)
=
\lambda_m\rho_m^4 |y|^2
V(\rho_m y)e^{P W_m(\rho_m y)}
=
|y|^2V(\rho_m y)
e^{P W_m(\rho_m y)+2\log\lambda_m-\log32},
\]
and
\[
D_m(y)
=
|y|^2
\left[
V(\rho_m y)
e^{P W_m(\rho_m y)+2\log\lambda_m-\log32}
-
e^{W_m(\rho_m y)+\log\lambda_m-\log32}
\right].
\]
By \eqref{eq:Pw expand}, we have
\begin{equation*}
    P W_m(\rho_m y)+2\log\lambda_m-\log32
=
W_m(\rho_m y)+\log\lambda_m-\log32
+8\pi \mathcal H(\rho_{m}^{2}y^2,b_m)
+O(\lambda_m)
\end{equation*}
locally uniformly in $y$. 
 
 Since $V(\rho_m y)\to V(0)=1$ and $8\pi \mathcal H(\rho_{m}^{2}y^2,b_m)\to 8\pi \mathcal H(0,0)=0$ locally uniformly, it follows that $A_m$ is locally uniformly bounded and $D_m\to0$ locally uniformly. Since
\[
\widetilde\phi_m\to0
\quad\text{in }H^1(B_R),
\]
the Sobolev embedding in dimension two gives
\[
\widetilde\phi_m\to0
\quad\text{in }L^q(B_R)
\quad\text{for }1<q<+\infty.
\]
It follows from Lemma \ref{lem:M-T inequality} that
\[
e^{\widetilde\phi_m}-1\to0
\quad\text{in }L^q(B_R)
\quad\text{for }1<q<+\infty.
\]
By \eqref{eq:Delta tilde phi m}, we deduce that
\[
-\Delta\widetilde\phi_m\to0
\quad\text{in }L^q(B_R)
\quad\text{for }1<q<+\infty.
\]
Then interior elliptic estimate \cite[Theorem 9.11]{G-T2001} yields
\[
\|\widetilde\phi_m\|_{W^{2,q}(B_{R/2})}
\le
C
\left(
\|\widetilde\phi_m\|_{L^q(B_R)}
+
\|-\Delta\widetilde\phi_m\|_{L^q(B_R)}
\right)
\to0.
\]
Choosing $q>2$, the Morrey embedding gives
\[
W^{2,q}(B_{R/2}) \hookrightarrow C^{1,\alpha}(B_{R/2}),
\]
where $\alpha=1-\frac{2}{q}$. Hence
\[
\widetilde{\phi}_m \to 0
\quad \text{in } C^1(B_{R/2}).
\]
Since \(R>0\) is arbitrary, we conclude that
\begin{equation}\label{eq:phi m tuta to 0}
    \widetilde{\phi}_m \to 0
\quad \text{in } C^1_{\mathrm{loc}}(\mathbb R^2).
\end{equation}

On the other hand, it follows from \eqref{eq:bubble} that
\[
W_m(\rho_m y)+\log\lambda_m-\log32
=
\log
\frac{32}
{\left(
1+\left|y^2-\frac{b_m}{\rho_m^2}\right|^2
\right)^2}.
\]
By Theorem A, 
\[
\xi_m=\frac{b_m}{\rho_m^2}\to \xi\in\mathbb{R}^{2},
\]
we obtain
\[
W_m(\rho_m y)+\log\lambda_m-\log32
\to
\log
\frac{32}
{\left(1+|y^2-\xi|^2\right)^2}
=
U_{\xi}(y) \quad \hbox{ in}\quad  C^1_{\mathrm{loc}}(\mathbb R^2).
\]
 Together with $
\widetilde\phi_m\to0$ and $8\pi \mathcal H(\rho_{m}^{2}y^2,b_m)\to 0$ in $C^1_{\mathrm{loc}}(\mathbb R^2)$, we obtain
\begin{equation}\label{eq:convergence of tuta vm}
\widetilde v_m
\to
U_{\xi}
\quad\text{in }C^1_{\mathrm{loc}}(\mathbb R^2).
\end{equation}
In particular,
$U_{\xi}(y)$ satisfies $$
-\Delta U_{\xi}
=
|y|^2e^{U_{\xi}}
\quad\text{in }\mathbb R^2.
$$

In the following, we rescale the linearized equation \eqref{eq:linearized-equation}. Recall that $\widetilde\psi_m(y)$ defined in \eqref{phi tuta m(y) and rho m}, a direct computation gives
\[
-\Delta_y\widetilde\psi_m
=
\rho_m^2\lambda_m
V(\rho_m y)|\rho_m y|^2
e^{v_m(\rho_m y)}
\widetilde\psi_m.
\]
We rewrite the above equation as
\begin{equation}\label{eq:rewritten equation for psi m tuta}
-\Delta\widetilde\psi_m
=
\Phi_m(y)\widetilde\psi_m,
\end{equation}
where
\[
\Phi_m(y)
=
|y|^2V(\rho_m y)
e^{v_m(\rho_m y)+2\log\lambda_m-\log32}.
\]
Using \eqref{eq:convergence of tuta vm}, we obtain
\begin{equation}\label{eq:convergence of Qm}
\Phi_m\to \Phi_{\xi}
\quad\text{locally uniformly in }\mathbb R^2,
\end{equation}
where
\[
\Phi_{\xi}(y)
=
|y|^2e^{U_{\xi}(y)}
=
\frac{32|y|^2}
{\left(1+|y^2-\xi|^2\right)^2}.
\]

We now prove local compactness of $\widetilde\psi_m$. For every fixed $R>0$, we have
\[
\int_{B_R}|\nabla\widetilde\psi_m|^2\,dy
=
\int_{B_{\rho_m R}}|\nabla\psi_m|^2\,dx
\le
\int_{B_1}|\nabla\psi_m|^2\,dx
=
1.
\]
Then  multiply \eqref{eq:linearized-equation} by $\psi_m$ and integrate over $B_1$ to derive
\begin{equation*}
\int_{B_1}|\nabla\psi_m|^2\,dx
=
\int_{B_1}
\lambda_m V(x)|x|^2e^{v_m}\psi_m^2\,dx,
\end{equation*}
which means
\[
\int_{B_1}
\lambda_m V(x)|x|^2e^{v_m}\psi_m^2\,dx
=
1.
\]
Taking the scaling $x=\rho_m y$, we obtain
\[
\int_{B_{\rho_m^{-1}}}
\Phi_m(y)\widetilde\psi_m^2(y)\,dy
=
1.
\]
Fix $R>0$ and let
\[
A_R
=
\left\{
y\in\mathbb R^2:
\frac R2<|y|<R
\right\}.
\]
Since $\Phi_{\xi}(y)>0$ and $\Phi_m\to \Phi_{\xi}$ locally uniformly on $A_{R}$, there exists $c_R>0$ such that
\[
\Phi_m(y)\ge c_R
\quad\text{for }y\in A_R
\]
for sufficiently large $m$. Therefore
\[
\int_{A_R}\widetilde\psi_m^2\,dy
\le
\frac1{c_R}
\int_{A_R}\Phi_m\widetilde\psi_m^2\,dy
\le
\frac1{c_R}.
\]
We {\bf{claim }}the elementary inequality holds:
\begin{equation}\label{eq: a elementary ineq}
\|u\|_{L^2(B_R)}
\le
C_R
\left(
\|\nabla u\|_{L^2(B_R)}
+
\|u\|_{L^2(A_R)}
\right),
\quad u\in H^1(B_R).
\end{equation}
Indeed, if this inequality failed, one could find $u_k\in H^1(B_R)$ such that
\begin{equation*}
    \|u_k\|_{L^2(B_R)}=1,
\quad
\|\nabla u_k\|_{L^2(B_R)}\to0,
\quad
\|u_k\|_{L^2(A_R)}\to0.
\end{equation*}
Then $u_k$ would converge strongly in $L^2(B_R)$ to a constant, and the vanishing on
$A_R$ would force this constant to be zero, contradicting
$\|u_k\|_{L^2(B_R)}=1$.

Replacing $u$ by $\widetilde\psi_m$ in \eqref{eq: a elementary ineq}, we get
\[
\|\widetilde\psi_m\|_{L^2(B_R)}
\le C_R,
\]
which implies
\[
\|\widetilde\psi_m\|_{H^1(B_R)}
\le C_R.
\]
Consequently, up to a subsequence, one obtains
\[
\widetilde\psi_m\rightharpoonup \psi_0
\quad\text{weakly in }H^1(B_R),
\]
and
\begin{equation}\label{eq:convergence of psi m tuta}
\widetilde\psi_m\to\psi_0
\quad\text{strongly in }L^q(B_R)\quad\text{for } 1<q<+\infty.
\end{equation}

By \eqref{eq:rewritten equation for psi m tuta}, for any $\varphi\in C_c^\infty(\mathbb R^2)$, we obtain
\[
\int_{\mathbb R^2}
\nabla\widetilde\psi_m\cdot\nabla\varphi\,dy
=
\int_{\mathbb R^2}
\Phi_m\widetilde\psi_m\varphi\,dy,
\]
together with \eqref{eq:convergence of Qm} and \eqref{eq:convergence of psi m tuta}, we pass to the limit and obtain
\[
\int_{\mathbb R^2}
\nabla\psi_0\cdot\nabla\varphi\,dy
=
\int_{\mathbb R^2}
\Phi_{\xi}\psi_0\varphi\,dy.
\]
Therefore
\[
-\Delta\psi_0
=
\Phi_{\xi}\psi_0
=
\frac{32|y|^2}
{\left(1+|y^2-\xi|^2\right)^2}
\psi_0
\quad\text{in }\mathbb R^2.
\]
Moreover, by lower semicontinuity,
\[
\int_{B_R}|\nabla\psi_0|^2\,dy
\le
\liminf_{m\to\infty}
\int_{B_R}|\nabla\widetilde\psi_m|^2\,dy
\le 1.
\]
Letting $R\to+\infty$, we have
\[
\int_{\mathbb R^2}|\nabla\psi_0|^2\,dy<+\infty.
\]

Finally, by Lemma \ref{lem:nondegeneracy of finite mass solutions of the singular Liouville equation}, we deduce that there exist constants $a_0,a_1,a_2\in\mathbb R$ such that
\[
\psi_0=a_0Z_0+a_1Z_1+a_2Z_2,
\]
where $Z_0,\,Z_{1},\,Z_{2}$ are defined in \eqref{eq:Z012}. Therefore, by \eqref{eq:convergence of psi m tuta} with $q>2$, interior elliptic estimates and Morrey embedding,
\[
\widetilde\psi_m
\longrightarrow
a_0Z_0+a_1Z_1+a_2Z_2,
\]
uniformly in $C^1(B_R(0))$ for any $R>0$. The proof is complete.
\end{proof}

Let
\[
K_{m}=\operatorname{span}\{PZ_{m}^{1},\,PZ_{m}^{2}\}.
\]
We decompose
\begin{equation}\label{decompose psi m}
    \psi_{m}=c_{1,m}PZ_{m}^{1}+c_{2,m}PZ_{m}^{2}+\psi_{m}^{\ast},
\end{equation}
where $\psi_{m}^{\ast}\in K_{m}^{\perp}$, namely
\[
\int_{B_1}\nabla\psi_m^*\nabla PZ_m^j\,dx=0,
\quad j=1,2.
\]
It follows from Lemma \ref{lem:the convergence of psi m tuta} that $c_{1,m}$ and $c_{2,m}$ are bounded.
\begin{lemma}\label{lem:estimate for psi m star}
There exists $\eta>0$ such that
\begin{equation}\label{eq:psi m star H01 estimate}
\|\psi_{m}^{\ast}\|\leq C\lambda_{m}^{\frac{1}{4}+\eta}(|c_{1,m}|+|c_{2,m}|)= o(1)(|c_{1,m}|+|c_{2,m}|).
\end{equation}
\end{lemma}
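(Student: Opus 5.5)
Since $\widehat L_m\psi_m=0$ and $\psi_m=c_{1,m}PZ_m^1+c_{2,m}PZ_m^2+\psi_m^*$, the remainder satisfies
\[
\widehat L_m\psi_m^*=-c_{1,m}\widehat L_m PZ_m^1-c_{2,m}\widehat L_m PZ_m^2=:h_m .
\]
The plan is to combine a uniform coercivity estimate for $\widehat L_m$ on $K_m^\perp$ with a bound on $h_m$ in a dual norm. Concretely, I would aim for
\begin{equation*}
\|\psi^*\|\le C\,\|\widehat L_m\psi^*\|_{p}\quad\text{for all }\psi^*\in K_m^\perp,
\end{equation*}
for some $p>1$ close to $1$, or equivalently with $H^{-1}$ replaced by $L^p$ via $\|f\|_{H^{-1}}\le C\|f\|_p$. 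This is the invertibility statement proved in \cite{D-W-Z2026} for the operator linearized at $PW_m$ on the orthogonal complement of $\{PZ_m^1,PZ_m^2\}$. In that reference the constant may blow up like $|\log\lambda_m|$, which costs at most $\lambda_m^{-\eta'}$. I would first transfer it to $\widehat L_m$, whose potential is $\lambda_mV|x|^2e^{v_m}=\lambda_mV|x|^2e^{PW_m}e^{\phi_m}$. The difference of the potentials is $\lambda_mV|x|^2e^{PW_m}(e^{\phi_m}-1)$. Its action on $\psi^*$ is estimated by H\"older, Lemma~\ref{lem:zhangLp}, Lemma~\ref{lem:M-T inequality} and $|e^{\phi_m}-1|\le|\phi_m|e^{|\phi_m|}$, using $\|\phi_m\|\le\lambda_m^{1/2-\varepsilon}$. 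This gives a bound $o(1)\|\psi^*\|$, which is absorbed.

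Alternatively, one could prove coercivity directly by contradiction: take $\|\psi^*\|=1$ with $\|\widehat L_m\psi^*\|_p\to0$, rescale as in Lemma~\ref{lem:the convergence of psi m tuta}, and get a limit $a_0Z_0+a_1Z_1+a_2Z_2$. The orthogonality to $PZ_m^{1},PZ_m^2$ rules out $a_1,a_2$. The $Z_0$ mode has to be excluded separately, using the boundary condition and testing against $PZ_m^0$ as in \cite{D-W-Z2026}. Loss of mass outside the core is controlled by the $L^p$ bounds of Lemma~\ref{lem:zhangLp}.

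Next I would estimate $\widehat L_m PZ_m^j$ for $j=1,2$. Since $-\Delta PZ_m^j=|x|^2e^{W_m}Z_m^j$, we have
\begin{equation*}
\widehat L_m PZ_m^j=|x|^2e^{W_m}Z_m^j-\lambda_mV|x|^2e^{v_m}PZ_m^j .
\end{equation*}
I would split this into three pieces:
\begin{itemize}
\item[(i)] $|x|^2e^{W_m}(Z_m^j-PZ_m^j)=O(\sqrt{\lambda_m})\,|x|^2e^{W_m}$, by \eqref{eq:PZm12};
\item[(ii)] $|x|^2PZ_m^j\bigl(e^{W_m}-\lambda_mVe^{PW_m}\bigr)$, where \eqref{eq:Pw expand} gives $\lambda_m e^{PW_m}=e^{W_m}\bigl(1+8\pi\mathcal H(x^2,b_m)+O(\lambda_m)\bigr)$ with $8\pi\mathcal H(x^2,b_m)=O(|x|^2+|b_m|)$, while $V-1=O(|x|^2)$;
\item[(iii)] $\lambda_mV|x|^2e^{PW_m}(e^{\phi_m}-1)PZ_m^j$.
\end{itemize}
Piece (i) is bounded by Lemma~\ref{lem:zhangLp} with $s=0$: it is $O(\lambda_m^{1/2-(p-1)/(2p)})$. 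Piece (ii) uses $s=2$: it is $O(\lambda_m^{1/2-(p-1)/(2p)})$ plus a term $O(|b_m|)=O(\sqrt{\lambda_m})$ times the $s=0$ norm. Piece (iii) is bounded by H\"older and Moser--Trudinger: it is $O\bigl(\|\phi_m\|\,\lambda_m^{-(pr-1)/(2pr)}\bigr)=O(\lambda_m^{1/2-\varepsilon-\delta})$ for $p,r$ close to $1$. Choosing $p>1$ close to $1$ and $\varepsilon$ small, all three pieces give $\|h_m\|_p\le C\lambda_m^{1/2-\delta}(|c_{1,m}|+|c_{2,m}|)$. Since $1/2-\delta-\eta'>1/4+\eta$ for small parameters, this yields \eqref{eq:psi m star H01 estimate}.

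The main obstacle is the uniform invertibility step, specifically excluding the $Z_0$ mode without imposing orthogonality to $PZ_m^0$. It also requires making sure the constant grows at most like a small negative power of $\lambda_m$, so that the gain $\lambda_m^{1/2}$ still beats $\lambda_m^{1/4}$. I would first try to quote the corresponding linear theory in \cite{D-W-Z2026} directly, which is set on the complement of the translation-type kernel only, and check that its constant is $O(|\log\lambda_m|)$.
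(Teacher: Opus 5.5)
Your proposal is correct and follows essentially the same route as the paper. The paper writes $\widetilde L_m\psi_m^*=-\sum_j c_{j,m}\Pi_m^\perp DS_{\lambda_m}(v_m)[PZ_m^j]$, bounds these residuals in $L^p$ through your pieces (i)--(iii) (its $R^{(1)}_{j,m}$, $G_{m,1}$, $G_{m,2}$), and transfers the $C|\log\lambda_m|$ invertibility of Proposition 3.7 in \cite{D-W-Z2026} from the potential $e^{PW_m}$ to $e^{v_m}$ by a Neumann-series perturbation. One point needs a more precise statement: to be absorbed, the perturbation must be $o(1/|\log\lambda_m|)$, not merely $o(1)$. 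Your H\"older/Moser--Trudinger bound does deliver this, since it is a positive power of $\lambda_m$.
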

\begin{proof}
We first recall a preliminary argument from \cite{D-W-Z2026}.
    For any $p>1$, let
\[
i_p^* : L^p(B_1) \longrightarrow H_0^1(B_1)
\]
be the adjoint operator of the embedding $$
i_p : H_0^1(B_1) \hookrightarrow L^{\frac{p}{p-1}}(B_1),$$ 
we have $u=i_p^*(v)$ if and only if
$-\Delta u=v$ in $B_1$, $u=0$  on $\partial B_1.$ We point out that \(i_p^*\) is a continuous mapping, namely
\begin{equation}\label{eq:boundedness of ipstar}
    \|i_p^*(v)\| \leq c_p \|v\|_p,
\quad \text{for any } v\in L^p(B_1),
\end{equation}
for some constant \(c_p\) depending only on \(p\).

In terms of $i_{p}^{\ast}$, equation \eqref{eq:main-equation} reduces to
\begin{equation*}
    v=i_p^*(\lambda V|x|^{2}e^{v}).
\end{equation*}
Define
\[
S_\lambda(v):=v-i_p^*\bigl(\lambda V|x|^2e^v\bigr).
\]
Then the linearized operator is given by
\[
DS_\lambda(v_m)[\psi]
=
\psi-i_p^*\bigl(\lambda_m V|x|^2e^{v_m}\psi\bigr).
\]
Therefore,
\[
-\Delta \psi_m=\lambda_m V|x|^2e^{v_m}\psi_m
\]
is equivalent to
\[
DS_{\lambda_m}(v_m)[\psi_m]=0.
\]
Substituting \eqref{decompose psi m} into the above identity, we obtain
\[
DS_{\lambda_m}(v_m)
\bigl[
c_{1,m}PZ_m^1+c_{2,m}PZ_m^2+\psi_m^*
\bigr]=0.
\]
Since \(DS_{\lambda_m}(v_m)\) is linear, it follows that
\[
DS_{\lambda_m}(v_m)[\psi_m^*]
+
\sum_{j=1}^2 c_{j,m}
DS_{\lambda_m}(v_m)[PZ_m^j]
=0.
\]
Let $\Pi_m^\perp:H_0^1(B_1)\to K_m^\perp$ denote the projection onto \(K_m^\perp\). Then we get
\begin{equation}\label{Pi m DS}
\Pi_m^\perp DS_{\lambda_m}(v_m)[\psi_m^*]
=
-\sum_{j=1}^2 c_{j,m}\,
\Pi_m^\perp DS_{\lambda_m}(v_m)[PZ_m^j].
\end{equation}
Define
\begin{equation}\label{eq:Lm tuta}
    \widetilde{L}_m
:=
\Pi_m^\perp DS_{\lambda_m}(v_m)\big|_{K_m^\perp}.
\end{equation}
Since \(\psi_m^*\in K_m^\perp\), \eqref{Pi m DS} becomes
\begin{equation}\label{eq:Lm tuta psi m star}
\widetilde{L}_m \psi_m^*
=
-\sum_{j=1}^2 c_{j,m}R_{j,m},
\end{equation}
where
\begin{equation}\label{eq:definition of Rjm}
R_{j,m}
=
\Pi_m^\perp DS_{\lambda_m}(v_m)[PZ_m^j],
\quad j=1,2.
\end{equation}

Recall $W_{m}$, $PZ_{m}^{j}$ and $Z_{m}^{j}$ defined in \eqref{eq:bubble}, \eqref{eq:PZm12} and \eqref{eq: definition of Zm012}, respectively, we derive 
$$-\Delta PZ_{m}^{j}=|x|^{2}e^{W_m}Z_{m}^{j}\quad\hbox{in} \quad B_1, \quad PZ_{m}^{j}=0\quad \hbox{  on}\quad \partial B_1 \quad \hbox{ for}\quad j=1,2.$$ It follows from the definition of $i_{p}^{\ast}$ that
\[
PZ_m^j=i_p^*\bigl(|x|^2e^{W_m}Z_m^j\bigr),
\quad j=1,2.
\]
Hence
\begin{equation}\label{eq:Ds lambda vm}
DS_{\lambda_m}(v_m)[PZ_m^j]
=
i_p^*\Bigl[
|x|^2e^{W_m}Z_m^j
-
\lambda_m V(x)|x|^2e^{v_m}PZ_m^j
\Bigr].
\end{equation}
Substituting \eqref{eq:Ds lambda vm} into \eqref{eq:definition of Rjm}, we have
\begin{equation}\label{eq:Rjm difference}
R_{j,m}
=
\Pi_m^\perp i_p^*\Bigl[
|x|^2e^{W_m}Z_m^j
-
\lambda_m V(x)|x|^2e^{v_m}PZ_m^j
\Bigr],
\quad j=1,2.
\end{equation}
We decompose the difference in \eqref{eq:Rjm difference} as
\begin{align*}
&\quad |x|^2 e^{W_m} Z_m^j
-
\lambda_m V(x)|x|^2 e^{v_m} PZ_m^j  \\
&=
|x|^2 e^{W_m}\bigl(Z_m^j-PZ_m^j\bigr)
+
\left(
|x|^2 e^{W_m}
-
\lambda_m V(x)|x|^2 e^{v_m}
\right)PZ_m^j .
\end{align*}
Accordingly, we write
\begin{equation}\label{eq:Rjm}
R_{j,m}
=
R_{j,m}^{(1)}
+
R_{j,m}^{(2)},
\end{equation}
where
\[
R_{j,m}^{(1)}
=
\Pi_m^\perp i_p^*
\left[
|x|^2 e^{W_m}\bigl(Z_m^j-PZ_m^j\bigr)
\right],
\]
and
\[
R_{j,m}^{(2)}
=
\Pi_m^\perp i_p^*
\left[
\left(
|x|^2 e^{W_m}
-
\lambda_m V(x)|x|^2 e^{v_m}
\right)PZ_m^j
\right].
\]

We first estimate $R_{j,m}^{(1)}$. Set
\[
f_{j,m}^{(1)}
:=
|x|^2 e^{W_m}\bigl(Z_m^j-PZ_m^j\bigr),
\quad j=1,2.
\]
By \eqref{eq:boundedness of ipstar} and the projection property, we deduce
\begin{equation}\label{eq:Rjm1}
    \|R_{j,m}^{(1)}\|\leq \|i_{p}^{\ast}(f_{j,m}^{(1)})\|\leq c_{p}\|f_{j,m}^{(1)}\|_{p}.
\end{equation}
Using \eqref{eq:PZm12}, we have
\[
\|f_{j,m}^{(1)}\|_{p}
\leq
C\lambda_{m}^{1/2}
\bigl\||x|^2e^{W_m}\bigr\|_{p},
\]
which, together with \eqref{eq:Lp estimate in Zhang}, we derive
\begin{equation*}
    \|f_{j,m}^{(1)}\|_{p}\leq C\lambda_{m}^{\frac{1}{2}-\frac{p-1}{2p}}\le\lambda _{m}^{1/4+\eta_1}o\left(\frac{1}{|\log\lambda_m|}\right),
\end{equation*}
where $\eta_1>0$ is small and $p>1$ is sufficiently close to $1$ such that $\frac{1}{2p}>\frac{1}{4}+\eta_1$.

Inserting the above estimate into \eqref{eq:Rjm1}, we arrive at
\begin{equation}\label{eq:Rjm1 estimate}
\|R_{j,m}^{(1)}\|
=\lambda _{m}^{\frac{1}{4}+\eta_1}
o\left(\frac{1}{|\log\lambda_m|}\right).
\end{equation}

We proceed to estimate $R_{j,m}^{(2)}$. Let
\[
f_{j,m}^{(2)}
:=
\left(
|x|^{2} e^{W_m}
-
\lambda_m V(x)|x|^{2} e^{v_m}
\right)
PZ_m^j .
\]
In view of \eqref{eq:PZm12} and \eqref{eq:Z012}, we get $\|PZ_{m}^{j}\|_{L^{\infty}}\leq C$, for $j=1,2$. 

Similarly to \eqref{eq:Rjm1}, we obtain
\begin{equation}\label{eq:Rjm2 estimate1}
    \|R_{j,m}^{(2)}\|
\leq
c_{p} \|f_{j,m}^{(2)}\|_{p}\leq C\left\|
|x|^2 e^{W_m}
-
\lambda_m V(x)|x|^2 e^{v_m}
\right\|_{p}.
\end{equation}

Recall that the expansion \eqref{eq:Pw expand},
\begin{equation*}
PW_m=W_m - \log\lambda_m+8\pi \mathcal H(x^2,b_m)+O(\lambda_m) .
\end{equation*}
We denote the remainder by $r_{m}(x)$, then
\begin{equation}\label{eq:rm estimate}
\|r_m\|_{L^\infty(B_1)} \leq C\lambda_m.
\end{equation}
By
$v_m = PW_m+\phi_m$, we derive
\[
\lambda_m e^{v_m}
=
e^{
W_m
+
8\pi \mathcal{H}(x^2,b_m)
+
r_m
+
\phi_m
}.
\]
Therefore,
\begin{equation}\label{eq:fjm2 difference}
 |x|^{2}e^{W_m}-\lambda_m V(x)|x|^{2}e^{v_m} =
|x|^{2}e^{W_m}\left(1-V(x)e^{8\pi \mathcal{H}(x^2,b_m)}e^{r_m+\phi_m}\right).
\end{equation}

Next we deal with the term $e^{8\pi\mathcal{H}(x^{2},b_{m})}$. By \eqref{H(x,y)}, one has
\[
\mathcal{H}(x^2,b_m)
=
\frac{1}{4\pi}
\log\left(
|x^2|^2 |b_m|^2 + 1 - 2\langle x^2,b_m\rangle
\right)=
\frac{1}{4\pi}\log A_m(x),
\]
where $
A_m(x)
:=
|x^2|^2 |b_m|^2 + 1 - 2\langle x^2,b_m\rangle$. Then we  obtain $$e^{8\pi \mathcal{H}(x^2,b_m)}=A_m(x)^2.$$ Taking into account that the bounds $|b_{m}|\leq M\sqrt{\lambda_m}$ and $|x|\leq1$, we derive
\[
\begin{aligned}
|A_m(x)-1|
&=
\left|
|x^2|^2 |b_m|^2
-
2\langle x^2,b_m\rangle
\right|
\\
&\le
|x^2|^2 |b_m|^2
+
2|x^2||b_m|
\\
&\le
C|b_m|^2
+
C|b_m|\\
&\le C\sqrt{\lambda_m}.
\end{aligned}
\]
Using the uniform boundedness of $A_m(x)$, we get
\begin{equation}\label{eq:Am2-1}
|A_m(x)^2-1|
=
|A_m(x)-1|\,|A_m(x)+1|
\le
C\sqrt{\lambda_m},
\end{equation}
namely,
\[
|
e^{8\pi \mathcal{H}(x^2,b_m)}
-
1
|
\le
C\sqrt{\lambda_m}.
\]

We now estimate the right hand side of \eqref{eq:fjm2 difference}. Let
\[
B_m(x)
:=
1
-
V(x)e^{8\pi\mathcal{H}(x^2,b_m)}e^{r_m+\phi_m}=1
-
V(x)A_{m}(x)^{2}e^{r_m+\phi_m}.
\]
We decompose it as
\[
B_m
=
(1 - V A_m^2 e^{r_m})
+
V A_m^2 e^{r_m}(1 - e^{\phi_m}),
\]
then
\begin{equation}\label{eq:fjm2 expand}
|x|^2 e^{W_m} B_m
=
G_{m,1}+G_{m,2},
\end{equation}
where $G_{m,1}=|x|^2 e^{W_m}\left(1 - V A_m^2 e^{r_m}\right)$, $G_{m,2}
=|x|^2 e^{W_m} V A_m^2 e^{r_m}\left(1 - e^{\phi_m}\right).$

 We first estimate $G_{m,1}$. By \eqref{eq:V}, \eqref{eq:rm estimate} and \eqref{eq:Am2-1}, we derive
 \begin{align*}
\left|1 - V A_m^2 e^{r_m}\right|&=\left|(1-V)+V(1-A_{m}^{2})+VA_{m}^{2}(1-e^{r_m})\right|\\
&\le
|1-V|
+
C|1-A_m^2|
+
C|1-e^{r_m}|
\\
&\le
C|x|^2
+
C\sqrt{\lambda_m}
+
C|r_m|
\\
&\le
C|x|^2
+
C\sqrt{\lambda_m}.
 \end{align*}
Hence
\begin{equation}\label{eq:Gm1}
|G_{m,1}|
\leq
C|x|^4 e^{W_m}
+
C\sqrt{\lambda_m}\,|x|^2 e^{W_m}.
\end{equation}
By \eqref{eq:Gm1} and Lemma \ref{lem:zhangLp}, we obtain
\begin{equation}\label{eq:Gm1 Lp estimate}
\begin{aligned}
\|G_{m,1}\|_p&\le C\||x|^4 e^{W_m}
+
\sqrt{\lambda_m}\,|x|^2 e^{W_m}\|_p\\
&\le C\||x|^4 e^{W_m}\|_p
+
C\sqrt{\lambda_m}\,\||x|^2 e^{W_m}\|_p\\
&\le C\lambda_{m}^{\frac{1}{2}-\frac{p-1}{2p}}+C\lambda_{m}^{\frac{1}{2}-\frac{p-1}{2p}}\\
&=C\lambda_{m}^{\frac{1}{2p}}.
\end{aligned}
\end{equation}

In the following, we estimate $G_{m,2}$. Since \(V\), \(A_m\), and \(e^{r_m}\) are uniformly bounded, it follows that
\[
\|G_{m,2}\|_{p}
\le
C\||x|^2 e^{W_m}\bigl(1-e^{\phi_m}\bigr)\|_{p}.
\]
Then, by applying H${\rm \ddot{o}}$lder's inequality with $\frac{1}{q}+\frac{1}{q'}=1$, we derive
\[
\left\||x|^2 e^{W_m}\bigl(1-e^{\phi_m}\bigr)\right\|_{p}
\le
\left\||x|^2 e^{W_m}\right\|_{pq}
\left\|1-e^{\phi_m}\right\|_{pq'}.
\]
According to Lemma \ref{lem:zhangLp}, we get
\begin{equation}\label{eq:Gm2 pq estimate}
\left\||x|^2 e^{W_m}\right\|_{pq}
\le
C\lambda_m^{-\frac{pq-1}{2pq}}.
\end{equation}

Next we claim: for any $r\ge1$, it holds
\begin{equation}\label{eq: 1-e^phi Lp estimate}
    \|1-e^{\phi_m}\|_{r}\leq C_r\|\phi_m\|.
\end{equation}
Indeed, a direct computation yields
\[
e^{\phi_m(x)}-1
=
\phi_m(x)\int_0^1 e^{s\phi_m(x)}\,ds,
\]
then
\[
|e^{\phi_m}-1|
\le
|\phi_m|
\int_0^1 e^{s\phi_m}\,ds.
\]
By \eqref{eq:phim H01 estimate}, $\|\phi_m\|$ is uniformly bounded. It follows from Lemma \ref{lem:M-T inequality} that for $s\in[0,1]$
\[
\|e^{s\phi_m}\|_{L^{2r}}\leq C_r .
\]
By H${\rm \ddot{o}}$lder's inequality, Minkowski's integral inequality and the Sobolev embedding $H_0^1(B_1)\hookrightarrow L^{2r}(B_1)$, we obtain
\begin{equation}\label{eq:Gm2 pq' estimate}
    \begin{aligned}
    \|e^{\phi_m}-1\|_{{r}}&\leq C \left\||\phi_m|\int_0^1 e^{s\phi_m}\,ds\right\|_{{r}}\\
    &\leq C\|\phi_m\|_{{2r}}\left\|\int_0^1 e^{s\phi_m}\,ds\right\|_{{2r}}\\
    &\leq C\|\phi_m\|_{{2r}}\int_0^1\| e^{s\phi_m}\|_{{2r}}\,ds\\
    &\leq C_r\|\phi_m\|\\
    &\leq C\lambda_m^{\frac12-\varepsilon}.
\end{aligned}
\end{equation}
A combination of \eqref{eq:Gm2 pq estimate} and \eqref{eq:Gm2 pq' estimate} leads to
\begin{equation}\label{eq:Gm2 Lp estimate}
\|G_{m,2}\|_p
\leq
C\lambda_m^{-\frac{pq-1}{2pq}}\lambda_m^{\frac{1}{2}-\varepsilon}
=C\lambda_m^{\frac{1}{2}-\varepsilon-\frac{pq-1}{2pq}}.
\end{equation}
Combining \eqref{eq:fjm2 difference}, \eqref{eq:fjm2 expand}, \eqref{eq:Gm1 Lp estimate} and \eqref{eq:Gm2 Lp estimate}, we have
\begin{equation}\label{eq:x ew-lam v x e v}
    \left\|
|x|^2 e^{W_m}
-
\lambda_m V(x)|x|^2 e^{v_m}
\right\|_p
\le
C\lambda_m^{\frac{1}{2p}}
+
C\lambda_m^{\frac12-\varepsilon-\frac{pq-1}{2pq}}\leq C \lambda_m^{\frac{1}{4}+\eta_{2}}o\left(\frac{1}{|\log \lambda_m|}\right).
\end{equation}
Here we select $p,q$ sufficiently close to $1$, and $\varepsilon>0$ is chosen sufficiently small, hence we can choose a small number $\eta_{2}>0$ such that both exponents are strictly larger than $\frac{1}{4}+\eta_{2}$. Moreover, the extra positive powers of $\lambda_m$ imply that
\[
C\lambda_m^{\frac{1}{2p}}
+
C\lambda_m^{\frac12-\varepsilon-\frac{pq-1}{2pq}}\leq C \lambda_m^{\frac{1}{4}+\eta_{2}}o\left(\frac{1}{|\log \lambda_m|}\right),
\]
which proves \eqref{eq:x ew-lam v x e v}. Substituting this estimate into \eqref{eq:Rjm2 estimate1} gives
\begin{equation}\label{eq:Rjm2 estimate}
\|R_{j,m}^{(2)}\| \leq C \lambda_m^{\frac{1}{4}+\eta_{2}}o\left(\frac{1}{|\log \lambda_m|}\right).
\end{equation}
It follows from \eqref{eq:Rjm}, \eqref{eq:Rjm1 estimate} and \eqref{eq:Rjm2 estimate} that
\begin{equation}\label{eq:Rjm estimate}
\|R_{j,m}\|
\leq
\lambda_m^{\frac14+\eta}
o\left(\frac{1}{|\log \lambda_m|}\right),
\end{equation}
where $\eta=\min\{\eta_1,\eta_2\}>0.$

We shall use the following invertibility from \cite{D-W-Z2026}. Define
\[
L_m(\psi)
:=
\Pi_m^\perp i_p^*
\left(
\lambda_m V(x)|x|^2 e^{PW_{m}}\psi
\right)
-\psi,
\quad \psi\in K_m^{\perp}.
\]
Then, by Proposition 3.7 in \cite{D-W-Z2026}, $L_m$ is invertible and
\[
\|(L_m)^{-1}\|
\leq C|\log\lambda_m|.
\]
Recall the definition of $\widetilde{L}_{m}$ in \eqref{eq:Lm tuta}, we have
\[
\widetilde L_m=-(L_m+H_m),
\]
where
\[
H_m(\psi)
=
\Pi_m^\perp i_p^*
\left[
\lambda_m V(x)|x|^2 e^{PW_{m}}
\left(e^{\phi_m}-1\right)\psi
\right].
\]
By an analogous argument to the estimates above, we obtain
\[
\|H_m\|
=
o\left(\frac{1}{|\log\lambda_m|}\right).
\]
Therefore,
\[
\|(L_m)^{-1}H_m\|
\leq
\|(L_m)^{-1}\|\,\|H_m\|
\leq
C|\log\lambda_m|
\,o\left(\frac{1}{|\log\lambda_m|}\right)
=o(1).
\]
In particular, for \(m\) large enough,
\[
\|(L_m)^{-1}H_m\|<1.
\]
By \cite[Theorem 4.40]{R-Y2008}, we obtain \(I+(L_m)^{-1}H_m\) is invertible.

Since $\widetilde L_m
=-
L_m
\left(
I+(L_m)^{-1}H_m
\right)$, we conclude that \(\widetilde L_m\) is invertible and 

$$
(\widetilde L_m)^{-1}
=-
\left(
I+(L_m)^{-1}H_m
\right)^{-1}
(L_m)^{-1},
$$ which means
\begin{equation}\label{eq:Lm tuta -1}
    \|(\widetilde L_m)^{-1}\|
\leq C|\log\lambda_m|.
\end{equation}

Finally, by \eqref{eq:Lm tuta psi m star}, \eqref{eq:Rjm}, \eqref{eq:Rjm estimate} and \eqref{eq:Lm tuta -1}, we conclude that there exists $\eta>0$ such that
\begin{align*}
    \|\psi_{m}^{\ast}\|&\leq C |\log\lambda_m|\sum\limits_{j=1}^{2}|c_{j,m}\|R_{j,m}\|\\
    &\leq C\lambda_m^{\frac14+\eta}(|c_{1,m}|+|c_{2,m}|)\\
    &=o(1)(|c_{1,m}|+|c_{2,m}|).
\end{align*}
\end{proof}
\begin{lemma}\label{lem:c1m=c2m=0}
    Assume that \eqref{eq:hessian V is nondegenerate} holds. Then for $m$ sufficiently large,
\[
c_{1,m}=c_{2,m}=0.
\]
\end{lemma}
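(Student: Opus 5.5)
We argue by contradiction: assume that along a subsequence $|c_{1,m}|+|c_{2,m}|\neq 0$, and normalize by setting $\bar c_{j,m}:=c_{j,m}/(|c_{1,m}|+|c_{2,m}|)$, so that $|\bar c_{1,m}|+|\bar c_{2,m}|=1$. Dividing $\psi_m$ by the same quantity (still denoted $\psi_m$), Lemma \ref{lem:estimate for psi m star} gives $\|\psi_m^*\|\le C\lambda_m^{\frac14+\eta}$. We then plug $v=v_m$ and $\psi=\psi_m$ into the two Pohozaev identities \eqref{eq:Pohozaev one} and \eqref{eq:Pohozaev two} of Lemma \ref{lem:Pohozaev identity}, expand every term to leading order, and show that the resulting $2\times2$ linear system for $(\bar c_{1,m},\bar c_{2,m})$ has a nonvanishing determinant, proportional to $\gamma_1\gamma_2$ (possibly combined with $\xi^*$ through the relation fixing $\xi$ in Theorem A). This forces $\bar c_{j,m}\to0$, contradicting the normalization.

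\emph{Step 1: left-hand sides.} Using \eqref{eq:V}, we write
\[
\frac{\partial V}{\partial x_1}x_1-\frac{\partial V}{\partial x_2}x_2=\gamma_1x_1^2-\gamma_2x_2^2+O(|x|^3),\qquad \frac{\partial V}{\partial x_1}x_2+\frac{\partial V}{\partial x_2}x_1=(\gamma_1+\gamma_2)x_1x_2+O(|x|^3).
\]
We then rescale $x=\rho_m y$. Since $\lambda_m|x|^2 e^{v_m}$ equals $\rho_m^{-2}\,\Phi_m(y)/V$ in these variables, the integrals become $\rho_m^2\int \Phi_m(y)\,Q(y)\,\widetilde\psi_m\,dy$, where $Q$ is the relevant quadratic form, plus errors of order $\rho_m^3$; Lemma \ref{lem:zhangLp} with $s=3$ controls the tail. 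By Lemma \ref{lem:the convergence of psi m tuta} and the bound on $\psi_m^*$, $\widetilde\psi_m$ is close to $\bar c_{1,m}Z_1+\bar c_{2,m}Z_2$; the $Z_0$ part vanishes in the limit because the $H_0^1$-scale of $\psi_m^*$ is small. Hence the left-hand sides equal $\rho_m^2$ times a linear form in $(\bar c_1,\bar c_2)$ with coefficients
\[
\int_{\mathbb R^2}\frac{32|y|^2}{(1+|y^2-\xi|^2)^2}\,Q(y)\,Z_j(y)\,dy,
\]
up to $o(\rho_m^2)$. Because $\xi$ is real, a symmetry check (under $y_2\mapsto -y_2$, $Z_2$ is odd) shows the first identity involves only $Z_1$ and the second only $Z_2$, so the system is diagonal.

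\emph{Step 2: right-hand sides.} On $\partial B_1$, $v_m\approx 8\pi G(x,0)$ in $C^1$ away from the origin, so $\partial_\nu v_m\to -4$, a constant. Likewise $\partial_\nu\psi_m$ is approximately $\bar c_{j,m}\,\partial_\nu PZ_m^j$; since $Z_m^j=O(\sqrt{\lambda_m})$ away from $0$, this is $O(\rho_m^2)$ with an explicit profile of order $\sqrt{\lambda_m}\,\mathrm{Re}/\mathrm{Im}(x^{-2})$. We compute the boundary integrals against $x_1^2-x_2^2$ and $x_1x_2$ explicitly; both are of order $\rho_m^2$. At the origin, $\nabla v_m(0)=\nabla PW_m(0)+\nabla\phi_m(0)$. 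Here $\nabla PW_m(0)=0$ by evenness of $W_m$ in $x$, while $\nabla\phi_m(0)=\rho_m^{-1}\nabla\widetilde\phi_m(0)$ and $\nabla\psi_m(0)=\rho_m^{-1}\nabla\widetilde\psi_m(0)$; since the $Z_j$ depend on $y^2$, $\nabla Z_j(0)=0$. We therefore need a quantitative bound $\nabla\widetilde\phi_m(0)=o(\rho_m^{2})$; we will use the evenness of $\phi_m$ (uniqueness in the construction of Theorem A with even data), which gives $\nabla\phi_m(0)=0$ exactly, and similarly decompose $\psi_m$ into its even and odd parts, with the odd part handled separately because it is orthogonal to $Z_j$.

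\emph{Step 3: conclusion.} Dividing by $\rho_m^2$ and letting $m\to\infty$ yields $\alpha_1\bar c_1=0$ and $\alpha_2\bar c_2=0$, where $\alpha_1$ and $\alpha_2$ are explicit integrals. The main obstacle is computing them and showing $\alpha_j\neq0$. In particular, the boundary contributions must combine with the interior ones into $\gamma_1$ and $\gamma_2$ weighted by moment integrals of the bubble, and the nondegeneracy $\gamma_1\gamma_2>0$, together with the value of $\xi^*$ determined in \cite{D-W-Z2026}, must prevent cancellation. We will first attempt these integrals via the substitution $z=y^2$, which turns them into standard integrals of the Liouville bubble centered at $\xi$.
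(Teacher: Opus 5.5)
Your overall architecture matches the paper's: Pohozaev identities applied to the decomposition \eqref{decompose psi m}, with the system made diagonal by the symmetry $y_2\mapsto -y_2$ since $\xi$ is real. The problem is that the scaling in Step~1 is off by a factor $|x|^2$, and this derails the rest. The left-hand sides of \eqref{eq:Pohozaev one}--\eqref{eq:Pohozaev two} are $\lambda_m\int_{B_1}e^{v_m}\psi\,(\cdots)\,dx$ with \emph{no} weight $|x|^2$; it is consumed by $\operatorname{div}(V|x|^2X)=V_{x_1}x_1-V_{x_2}x_2$. Since $\lambda_m\rho_m^4e^{v_m(\rho_m y)}=\Phi_m(y)/(|y|^2V(\rho_m y))$, these terms are $O(1)$, not $O(\rho_m^2)$. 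For $\psi=PZ_m^j$ they tend to $32\int_{\mathbb R^2}Q(y)Z_j(y)(1+|y^2-\xi|^2)^{-2}\,dy$. After $\zeta=y^2-\xi$ this gives $8(\gamma_1+\gamma_2)I_1(s)$ and $8(\gamma_1+\gamma_2)I_2(s)$ on the diagonal, because the $\frac12(\gamma_1-\gamma_2)|y|^2$ part of $Q$ cancels by oddness. The boundary terms are $O(\rho_m^2)$ for $PZ_m^j$ and $O(\|\psi_m^*\|)$ for $\psi_m^*$, so they are negligible and nothing needs to be ``combined''. The $\alpha_j$ you would compute, with weight $|y|^2Q$ plus boundary contributions, are therefore not the right coefficients. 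Worse, with your $\rho_m^2$ normalization the argument cannot close. The available bounds on the $\psi_m^*$ contributions are of order $\lambda_m^{\frac14+\eta}$ (up to a small H\"older loss in the volume term), which is not $o(\rho_m^2)=o(\lambda_m^{1/2})$.

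The origin term is the second gap: the evenness you invoke is not available. $V$ is only $C^3$ with an even $2$-jet, and the $O(|x|^3)$ remainder in \eqref{eq:V} is arbitrary. So the problem is not invariant under $x\mapsto -x$, $\phi_m$ need not be even, and $\nabla v_m(0)$ need not vanish. Splitting $\psi_m$ into even and odd parts also does not decouple the linearized equation, since its potential $\lambda_m V|x|^2e^{v_m}$ is not even. With the correct scaling you only need $\mathcal{P}^{\text{loc}}_{i,m}(\psi_m)=o(1)(|c_{1,m}|+|c_{2,m}|)$. The paper gets this from three facts:
\begin{itemize}
\item $\nabla v_m(0)=o(1)$ (Proposition~2.4 of \cite{D-W-Z2026});
\item $\nabla PZ_m^j(0)=O(\rho_m^2)$;
\item a \emph{quantitative} bound $|\nabla\psi_m^*(0)|\le C(|c_{1,m}|+|c_{2,m}|)$, from rescaled $W^{2,p_0}$ estimates driven by $\|\psi_m^*\|\le C\lambda_m^{\frac14+\eta}(|c_{1,m}|+|c_{2,m}|)$.
\end{itemize}
The $C^1_{\mathrm{loc}}$ convergence of $\widetilde\psi_m$ alone only gives $\nabla\psi_m(0)=o(\rho_m^{-1})$, which is useless here.

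Finally, you leave the decisive nonvanishing open, and your guess of a determinant proportional to $\gamma_1\gamma_2$ is not what comes out. The matrix is $8(\gamma_1+\gamma_2)\,\mathrm{diag}(I_1(s),I_2(s))+o(1)$. Here $I_2(s)>0$ is elementary and $\gamma_1+\gamma_2\neq 0$ follows from $\gamma_1\gamma_2>0$. However, $I_1(\xi^*)\neq 0$ is a genuinely nontrivial input (Lemma~3.10 of \cite{D-W-Z2026}) that your plan does not supply.
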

\begin{proof}
    Recall the two Pohozaev identities in Lemma \ref{lem:Pohozaev identity}, we introduce two linear functionals
\begin{align*}
\mathcal{P}_{1,m}(\eta) := & \int_{B_1} \lambda_m e^{v_m} \eta \bigl( V_{x_1} x_1 - V_{x_2} x_2 \bigr) \, dx \nonumber
- \int_{\partial B_1} \partial_\nu v_m  \partial_\nu \eta \, (x_1^2 - x_2^2) \, d\sigma \nonumber \\
& + 2\pi v_{m,x_1}(0)  \eta_{x_1}(0) - 2\pi  v_{m,x_2}(0)  \eta_{x_2}(0)\\
=&\mathcal{P}_{1,m}^{\text{vol}}(\eta)+\mathcal{P}_{1,m}^{\text{bd}}(\eta)+\mathcal{P}_{1,m}^{\text{loc}}(\eta),
\end{align*}
where
\[\mathcal{P}_{1,m}^{\text{vol}}(\eta)=\int_{B_1} \lambda_m e^{v_m} \eta \bigl( V_{x_1} x_1 - V_{x_2} x_2 \bigr) \, dx,\]
\[\mathcal{P}_{1,m}^{\text{bd}}(\eta)=- \int_{\partial B_1} \partial_\nu v_m  \partial_\nu \eta \, (x_1^2 - x_2^2) \, d\sigma,\]
\[\mathcal{P}_{1,m}^{\text{loc}}(\eta)=2\pi v_{m,x_1}(0)  \eta_{x_1}(0) - 2\pi  v_{m,x_2}(0)  \eta_{x_2}(0).\]
Likewise
\begin{align*}
\mathcal{P}_{2,m}(\eta) := & \int_{B_1} \lambda_m e^{v_m} \eta \bigl( V_{x_1} x_2 + V_{x_2} x_1 \bigr) \, dx \nonumber
- 2\int_{\partial B_1} \partial_\nu v_m  \partial_\nu \eta \, x_1 x_2 \, d\sigma \nonumber \\
& + 2\pi v_{m,x_1}(0)  \eta_{x_2}(0) + 2\pi  v_{m,x_2}(0)  \eta_{x_1}(0)\\
=&\mathcal{P}_{2,m}^{\text{vol}}(\eta)+\mathcal{P}_{2,m}^{\text{bd}}(\eta)+\mathcal{P}_{2,m}^{\text{loc}}(\eta),
\end{align*}
where
\[\mathcal{P}_{2,m}^{\text{vol}}(\eta)=\int_{B_1} \lambda_m e^{v_m} \eta \bigl( V_{x_1} x_2 + V_{x_2} x_1 \bigr) \, dx,\]
\[\mathcal{P}_{2,m}^{\text{bd}}(\eta)=- 2\int_{\partial B_1} \partial_\nu v_m  \partial_\nu \eta \, x_1 x_2 \, d\sigma,\]
\[\mathcal{P}_{2,m}^{\text{loc}}(\eta)=2\pi v_{m,x_1}(0)  \eta_{x_2}(0) + 2\pi  v_{m,x_2}(0)  \eta_{x_1}(0).\]
Then we have $\mathcal{P}_{1,m}(\psi_m)=\mathcal{P}_{2,m}(\psi_m)=0$. By \eqref{decompose psi m}, we obtain
\begin{equation}\label{eq:systerm for Pim}
    \sum_{j=1}^2 \mathcal{P}_{i,m}(PZ_m^j)c_{j,m} = -\mathcal{P}_{i,m}(\psi_m^{\ast}), \quad i=1,2.
\end{equation}

Next, we estimate $\mathcal{P}_{i,m}(\psi_m^{\ast})$. We first deal with the two volume integral terms $\mathcal{P}_{1,m}^{\text{vol}}(\psi_{m}^{\ast})$ and $\mathcal{P}_{2,m}^{\text{vol}}(\psi_{m}^{\ast})$. By \eqref{eq:V}, one has
\[
V_{x_1} x_1 - V_{x_2} x_2=O(|x|^{2}),\quad V_{x_1} x_2 + V_{x_2} x_1=O(|x|^{2}).
\]
By applying H${\rm \ddot{o}}$lder's inequality, Lemmas \ref{lem:zhangLp}, \ref{lem:M-T inequality} and \ref{lem:estimate for psi m star}, and Sobolev embedding, we derive:
\begin{equation}\label{eq:Pim vol psi m star}
    \begin{aligned}
|\mathcal{P}_{i,m}^{\text{vol}}(\psi_{m}^{\ast})|&\leq \left|\int_{B_1} \lambda_m e^{v_m}\psi_m^* O(|x|^2)\,dx
\right|\\
&\leq C\left\|\lambda_m |x|^2 e^{v_m}\right\|_{p}
\left\|\psi_m^*\right\|_{{p'}}  \\
&\leq C \left\|\lambda_m |x|^2 e^{PW_m}\right\|_{{pq}}\|e^{\phi_m}\|_{pq'}
\left\|\psi_m^*\right\|\\
&\leq C\lambda_{m}^{-\frac{pq-1}{2pq}}\lambda_{m}^{\frac{1}{4}+\eta}(|c_{1,m}|+|c_{2,m}|)\\
&=o(1)(|c_{1,m}|+|c_{2,m}|),\quad i=1,2,
\end{aligned}
\end{equation}
where  $\frac{1}{p}+\frac{1}{p'}=1$,  $\frac{1}{q}+\frac{1}{q'}=1$ and $p,q>1$ are chosen sufficiently close to 1.

In what follows, we estimate the boundary terms
\[\mathcal{P}_{i,m}^{\text{bd}}(\psi_{m}^{\ast})=\int_{\partial B_1} \partial_\nu v_m  \partial_\nu \psi_{m}^{\ast} \, Q_{i}(x) \, d\sigma,\]
where $Q_{1}(x)=x_{2}^{2}-x_{1}^{2}$, $Q_{2}(x)=-2x_{1}x_2$. Let
\[
g_{i,m}:=\frac{\partial v_{m}}{\partial\nu}Q_{i}\quad\text{on}\,\,\partial B_{1}.
\]
We claim that $g_{i,m}$ is uniformly bounded. 

Indeed, we take $$A_\delta=\{x\in \overline{B_1}:1-2\delta<|x|\leq1\},$$ which stays away from the blow-up point $0$. By $|b_m|=O(\sqrt{\lambda_m})$, \eqref{eq:Pw expand}, and the remainder of \eqref{eq:Pw expand} is harmonic and its boundary value is $O(\lambda)$ in $C^{2}(\partial B_1)$, then we deduce that $\|PW_m\|_{C^1(A_\delta)} \leq C.$ Since $v_m=PW_m+\phi_m$ and
\begin{equation}\label{eq:-laplace vm}
     -\Delta v_m = \lambda_m V(x)|x|^2 e^{v_m}.
\end{equation}
Lemma \ref{lem:M-T inequality} and \eqref{eq:phim H01 estimate} imply the right hand side of \eqref{eq:-laplace vm} is uniformly bounded in \(L^r(A_\delta)\) for some \(r>2\). Hence the standard boundary elliptic estimates and Morrey embedding yield
\[
\|v_m\|_{C^{1,\alpha}(A_{\delta/2})}\leq C,
\]
for some $\alpha\in(0,1).$ Applying the Schauder estimate \cite[Theorem 6.6]{G-T2001}, we have
\[
\|v_m\|_{C^{2,\alpha}(A_{\delta/4})}\leq C,
\]
which gives
\[
\|g_{i,m}\|_{C^1(\partial B_1)} \leq C.
\]
We proceed to choose an extension \(G_{i,m}\in C^1(\overline{B_1})\) such that
\(G_{i,m}=g_{i,m}\) on \(\partial B_1\), and $\operatorname{supp} G_{i,m}\subset A:=\left\{x:\frac12<|x|\leq1\right\}.$ Moreover, the extension can be chosen so that
\begin{equation}\label{eq:Gim W1 bounded}
   \|G_{i,m}\|_{W^{1,\infty}(B_1)}\leq C.
\end{equation}
Then
\[
\mathcal{P}_{i,m}^{\text{bd}}(\psi_{m}^{\ast})
=
\int_{\partial B_1}G_{i,m}\partial_\nu\psi_m^*\,d\sigma .
\]
Gauss formula gives
\begin{equation}\label{eq:green formula for bd term}
\int_{\partial B_1} G_{i,m}\partial_\nu \psi_m^*\,d\sigma
=
\int_{B_1}\nabla G_{i,m}\cdot \nabla \psi_m^*\,dx
+
\int_{B_1}G_{i,m}\Delta \psi_m^*\,dx .
\end{equation}
By \eqref{eq:Gim W1 bounded} and Lemma \ref{lem:estimate for psi m star}, the first term on the right hand side of \eqref{eq:green formula for bd term} satisfies
\[
\left|
\int_{B_1}\nabla G_{i,m}\cdot \nabla \psi_m^*\,dx
\right|
\leq
C\|\psi_m^*\|
\leq
C\lambda_{m}^{\frac{1}{4}+\eta}(|c_{1,m}|+|c_{2,m}|)= o(1)(|c_{1,m}|+|c_{2,m}|).
\]

On the other hand, recalling the definitions of $PZ_{m}^{j}$ and $Z_{m}^{j}$, together with \eqref{decompose psi m}, for $j=1,2$, we get
\begin{equation}\label{eq:equation for psi m star}
\Delta\psi_m^*
=
-q_m\psi_m^*
-
\sum_{j=1}^{2}c_{j,m}
\left(
q_{m}PZ_m^j
-
|x|^2e^{W_m}Z_m^j
\right),
\end{equation}
where $q_{m}=\lambda_m V(x)|x|^2e^{v_m}.$ Since $\operatorname{supp}G_{i,m}\subset A:=\{x:\frac12<|x|\le1\}$, which stays away from the blow-up point $0$, we have
\begin{equation}\label{eq:boundedness of lambda V |x|e^v and |x|e^W}
\|q_m\|_{L^p(A)}\leq C\lambda_m,
\quad
\left\||x|^2 e^{W_m}\right\|_{L^p(A)}
\leq C\lambda_m.
\end{equation}
From \eqref{eq:green formula for bd term}, \eqref{eq:boundedness of lambda V |x|e^v and |x|e^W}, Lemma \ref{lem:estimate for psi m star}, the uniform boundedness of $PZ_{m}^{j}$ and $Z_{m}^{j}$ and Sobolev embedding, we derive
\begin{equation}\label{eq:Gim lambda m one}
\begin{aligned}
\left|
\int_{B_1}G_{i,m}q_m\psi_m^*\,dx
\right|
&\leq
C\|q_m\|_{L^p(A)}
\|\psi_m^*\|_{{p'}}  \\
&\leq
C\lambda_m\|\psi_m^*\|
\leq
C\lambda_{m}^{\frac{5}{4}+\eta}(|c_{1,m}|+|c_{2,m}|)\\
&= o(1)(|c_{1,m}|+|c_{2,m}|),
\end{aligned}
\end{equation}
and
\begin{equation}\label{eq:Gim lambda m two}
\begin{aligned}
&\quad \left|
\int_{B_1}G_{i,m}
\left(
q_mPZ_m^j
-
|x|^2e^{W_m}Z_m^j
\right)\,dx
\right|  \\
&\leq
C\|q_m\|_{L^1(A)}
+
C\left\||x|^2e^{W_m}\right\|_{L^1(A)}
\leq
C\lambda_m .
\end{aligned}
\end{equation}
A combination of \eqref{eq:Gim lambda m one} and \eqref{eq:Gim lambda m two} yields that the second term on the right hand side of \eqref{eq:green formula for bd term} satisfies
\[
\left|
\int_{B_1}G_{i,m}\Delta \psi_m^*\,dx
\right|=o(1)(|c_{1,m}|+|c_{2,m}|).
\]
Hence we derive
\[
|\mathcal{P}_{i,m}^{\text{bd}}(\psi_{m}^{\ast})|=o(1)(|c_{1,m}|+|c_{2,m}|),\quad i=1,2.
\]

It remains to estimate the local derivative terms $\mathcal{P}_{i,m}^{\text{loc}}(\psi_{m}^{\ast})$ for $i=1,2$. By Proposition 2.4 in \cite{D-W-Z2026}, we have
\begin{equation}\label{eq:derivative of vm is o(1)}
\frac{\partial v_m}{\partial x_i}(0)=o(1),
\quad i=1,2,
\end{equation}
as \(m\to+\infty\). Then we only need to prove that
\begin{equation}\label{eq:derivative of psi m star is bounded}
    \left|\frac{\partial\psi_{m}^{\ast}}{\partial x_{i}}(0)\right|\leq C(|c_{1,m}|+|c_{2,m}|),\quad \text{for}\;\; i=1,2.
\end{equation}
We begin by rewriting \eqref{eq:equation for psi m star} as
\begin{equation}\label{eq:psi m equation}
-\Delta \psi_m^*
= q_m \psi_m^*
+ \sum_{j=1}^{2} c_{j,m} H_{j,m},
\end{equation}
where $H_{j,m}(x) := q_m P Z_m^j - |x|^2 e^{W_m} Z_m^j.$ We define $\widetilde{\psi}_m^*(y):=\psi_m^*(\rho_m y),$ by applying the change of variable $x=\rho_m y$, we obtain in \(B_2(0)\),
\begin{equation}\label{eq:equation for psi m star tuta}
-\Delta_y \widetilde{\psi}_m^*
=
\widetilde{q}_m(y)\widetilde{\psi}_m^*
+
\sum_{j=1}^{2}c_{j,m}\widetilde H_{j,m}(y),
\end{equation}
where $\widetilde{q}_m(y):=\rho_m^2 q_m(\rho_m y)$, $\widetilde H_{j,m}(y):=\rho_m^2 H_{j,m}(\rho_m y)$.

We next estimate $\widetilde{q}_{m}(y)$. By the expansion \eqref{eq:Pw expand}, we derive
\[
\rho_m^2 q_m(\rho_m y)
=
Q_m(y)e^{\phi_m(\rho_m y)}O(1),
\]
where $Q_m(y):=32V(\rho_m y)\frac{|y|^2}{\left(1+|y^2-\xi_m|^2\right)^2}$. It follows from Lemma \ref{lem:M-T inequality} that for each fixed $s>1$,
\begin{equation}\label{eq:e phim rmy Ls}
    \|e^{\phi_m(\rho_m \cdot)}\|_{L^s(B_2)}
=
\rho_m^{-2/s}
\|e^{\phi_m}\|_{L^s(B_{2\rho_m})}
\leq
C_s\rho_m^{-2/s},
\end{equation}
which implies
\begin{equation}\label{eq:q m tuta Ls estimate}
   \|\widetilde{q}_m\|_{L^s(B_2)} \leq C_s \rho_m^{-2/s}.
\end{equation}

In the following, we estimate $\widetilde H_{j,m}$ for $j=1,2$. We first decompose $\widetilde H_{j,m}$ as
\[
\begin{aligned}
\widetilde{H}_{j,m}(y)
&= \rho_m^2 |\rho_m y|^2 e^{W_m(\rho_m y)}
\left( PZ_m^j(\rho_m y) - Z_m^j(\rho_m y) \right) \\
&\quad + \rho_m^2
\left( q_m(\rho_m y) - |\rho_m y|^2 e^{W_m(\rho_m y)} \right)
PZ_m^j(\rho_m y)\\
&=\widetilde{H}_{j,m}^{(1)}(y)+\widetilde{H}_{j,m}^{(2)}(y).
\end{aligned}
\]
In view of the definition of $Q_{m}(y)$, we get $|\rho_m y|^2 e^{W_m(\rho_m y)}$=$\rho_{m}^{-2}\frac{Q_m(y)}{V(\rho_{m}y)}$. Then by \eqref{eq:PZm12}, for some $p_0>1$, one has
\begin{equation}\label{eq:Hjm 1 tuta Lp estimate}
\|\widetilde{H}_{j,m}^{(1)}(y)\|_{L^{p_0}(B_2)}=\left\|\frac{Q_m(y)}{V(\rho_{m}y)}\left( PZ_m^j(\rho_m y) - Z_m^j(\rho_m y) \right)\right\|_{L^{p_0}(B_2)}\leq C\rho_{m}^{2},
\end{equation}
where we used $\left\|\frac{Q_m(y)}{V(\rho_{m}y)}\right\|_{L^{\infty}(B_2)}\le C$. 

For $\widetilde{H}_{j,m}^{(2)}(y)$, a direct computation yields that
\begin{equation}\label{eq:H jm2 tuta}
    \widetilde{H}_{j,m}^{(2)}(y)=Q_m(y) PZ_m^j(\rho_m y)
\left(
E_m(y)e^{\phi_m(\rho_m y)}
- \frac{1}{V(\rho_m y)}
\right),
\end{equation}
where
\begin{equation}\label{eq:definition of Emy}
E_{m}(y):=e^{8\pi \mathcal{H}((\rho_m y)^2,b_m)+O(\lambda_m)}.
\end{equation}
We rewrite the inner factor in \eqref{eq:H jm2 tuta} as
\begin{equation}\label{eq:decompose Emy e phim rhom y}
    E_m(y)e^{\phi_m(\rho_m y)}- \frac{1}{V(\rho_m y)}=E_m(y)(e^{\phi_m(\rho_m y)}-1)
+(E_m(y)-1)+\left(1-\frac{1}{V(\rho_m y)}\right).
\end{equation}
On \(B_2\), we show that $
E_m(y)-1=O(\rho_m^2)$. Moreover, it follows from \eqref{eq:standard assumptions on V}-\eqref{eq:V0 nabla V0 and nondegenerate} that $$
1-\frac{1}{V(\rho_m y)}
=
O(\rho_m^2|y|^2)
=
O(\rho_m^2)
\quad \hbox{ uniformly for}\quad  y\in B_2.$$ Hence
\begin{equation}\label{eq:Em e^phim-1/V}
\left\|
E_m e^{\phi_m(\rho_m\cdot)}
-\frac{1}{V(\rho_m\cdot)}
\right\|_{L^{p_0}(B_2)}
\leq
C\|e^{\phi_m(\rho_m\cdot)}-1\|_{L^{p_0}(B_2)}
+C\rho_m^2.
\end{equation}
Using \eqref{eq: 1-e^phi Lp estimate} and the change of variable $x=\rho_m y$, we derive
\begin{equation}\label{eq:scaling for e^phi -1}
    \|e^{\phi_m(\rho_m\cdot)}-1\|_{L^{p_0}(B_2)}=\rho_{m}^{-2/p_0}\|e^{\phi_m}-1\|_{L^{p_0}(B_{2\rho_m)}}
\leq
C \rho_{m}^{-2/p_0}\|\phi_m\|=C \rho_{m}^{-2/p_0+2-4\varepsilon}.
\end{equation}
Substituting \eqref{eq:scaling for e^phi -1} into \eqref{eq:Em e^phim-1/V} gives
\begin{equation*}
\left\|
E_m e^{\phi_m(\rho_m\cdot)}
-\frac{1}{V(\rho_m\cdot)}
\right\|_{L^{p_0}(B_2)}
\leq
C \rho_{m}^{-2/p_0+2-4\varepsilon},
\end{equation*}
together with the $L^{\infty}$-boundedness of $Q_m(y)$ and $PZ_{m}^{j}$, we obtain
\begin{equation}\label{eq:H jm2 tuta Lp estimate}
    \|\widetilde{H}_{j,m}^{(2)}(y)\|_{L^{p_0}(B_2)}\le C\rho_{m}^{-2/p_0+2-4\varepsilon}.
\end{equation}
Combining \eqref{eq:Hjm 1 tuta Lp estimate} with \eqref{eq:H jm2 tuta Lp estimate}, we deduce
\begin{equation}\label{eq:H jm tuta Lp estimate}
    \|\widetilde{H}_{j,m}(y)\|_{L^{p_0}(B_2)}\le C\rho_{m}^{-2/p_0+2-4\varepsilon}.
\end{equation}

For \eqref{eq:equation for psi m star tuta}, the interior elliptic estimate \cite[Theorem 9.11]{G-T2001} shows
\begin{equation}\label{eq:interior estimate for psi m star tuta}
\|\widetilde{\psi}_m^*\|_{W^{2,p_0}(B_1)}
\le C(
\|\widetilde{\psi}_m^*\|_{L^{p_0}(B_2)}
+
\|\widetilde{q}_m\widetilde{\psi}_m^*\|_{L^{p_0}(B_2)}
+
\sum_{j=1}^{2}|c_{j,m}|\|\widetilde H_{j,m}\|_{L^{p_0}(B_2)}
).
\end{equation}
By \eqref{eq:psi m star H01 estimate} and Sobolev embedding, we obtain for $p_0>2$,
\begin{equation}\label{eq:psi m star tuta Lp estimate}
\|\widetilde{\psi}_m^{\ast}\|_{L^{p_0}(B_2)}=
\rho_m^{-2/p_0}
\|\psi_m^*\|_{L^{p_0}(B_{2\rho_m})}\le
C\rho_m^{-2/p_0}
\|\psi_m^*\|
=C \rho_{m}^{1+4\eta-\frac{2}{p_0}}(|c_{1,m}|+|c_{2,m}|).
\end{equation}
By applying H${\rm \ddot{o}}$lder's inequality, we derive
\begin{equation}\label{eq:qm tuta psi m star tuta Lp estimate}
\|\widetilde{q}_m\widetilde{\psi}_m^*\|_{L^{p_0}(B_2)}\leq\|\widetilde{q}_m\|_{L^s(B_2)}\|\widetilde{\psi}_m^*\|_{L^r(B_2)},
\end{equation}
where $\frac{1}{s}+\frac{1}{r}=\frac{1}{p_0}$. Inserting \eqref{eq:q m tuta Ls estimate} and \eqref{eq:psi m star tuta Lp estimate} into \eqref{eq:qm tuta psi m star tuta Lp estimate}, we get
\begin{equation}\label{eq:qm tuta psi m star tuta Lp estimate1}
\|\widetilde{q}_m\widetilde{\psi}_m^*\|_{L^{p_0}(B_2)}\leq C \rho_{m}^{1+4\eta-\frac{2}{r}-\frac{2}{s}}(|c_{1,m}|+|c_{2,m}|).
\end{equation}
Substituting \eqref{eq:H jm tuta Lp estimate}, \eqref{eq:psi m star tuta Lp estimate} and \eqref{eq:qm tuta psi m star tuta Lp estimate1} into \eqref{eq:interior estimate for psi m star tuta} gives
\begin{equation*}
\begin{aligned}
\|\widetilde{\psi}_m^*\|_{W^{2,p_0}(B_1)}
&\le
C\left(
\rho_m^{1+4\eta-\frac{2}{p_0}}
+
\rho_m^{1+4\eta-\frac{2}{r}-\frac{2}{s}}
+
\rho_m^{2-4\varepsilon-\frac{2}{p_0}}
\right)
\left(|c_{1,m}|+|c_{2,m}|\right)\\
&\leq C \rho_m^{1+4\eta-\frac{2}{p_0}}
\left(|c_{1,m}|+|c_{2,m}|\right),
\end{aligned}
\end{equation*}
where $4\varepsilon<1-4\eta$. By Morrey embedding $W^{2,p_0}(B_{1}) \hookrightarrow C^{1,\alpha}(B_{1})$, $\alpha=1-2/p_0>0$, we derive
\begin{equation}\label{eq:nabla y psi m star tuta 0}
|\nabla_y \widetilde{\psi}_m^*(0)|
\le\|\nabla\widetilde{\psi}_{m}^{\ast}\|\leq
C \rho_m^{1+4\eta-\frac{2}{p_0}}
\left(|c_{1,m}|+|c_{2,m}|\right).
\end{equation}
Scaling back, we have
\[
|\nabla_x \psi_m^*(0)|
\le|\rho_m^{-1}\nabla_y \widetilde{\psi}_m^*(0)|\le
C\rho_m^{4\eta-\frac{2}{p_0}}\left(|c_{1,m}|+|c_{2,m}|\right).
\]
Since $\rho_m=\left(\frac{\lambda_m}{32}\right)^{1/4}$, we choose $p_0$ sufficiently large such that $4\eta-\frac{2}{p_0}>0$.  Therefore,
\[
|\nabla_x\psi_m^*(0)|
\le
C\left(|c_{1,m}|+|c_{2,m}|\right),
\]
which is \eqref{eq:derivative of psi m star is bounded}. By \eqref{eq:derivative of vm is o(1)} and \eqref{eq:derivative of psi m star is bounded}, we conclude
\[
|\mathcal{P}_{i,m}^{\text{loc}}(\psi_{m}^{\ast})|=o(1)(|c_{1,m}|+|c_{2,m}|),\quad i=1,2.
\]

We now estimate the main terms $\mathcal{P}_{i,m}(PZ_m^j)$, $i,j=1,2.$ As in the previous discussion, we decompose  $\mathcal{P}_{i,m}(PZ_m^j)$ as
\begin{equation}\label{eq:Pim PZmj decompose}
\mathcal{P}_{i,m}(PZ_m^j)
=
\mathcal{P}_{i,m}^{\mathrm{vol}}(PZ_m^j)
+
\mathcal{P}_{i,m}^{\mathrm{bd}}(PZ_m^j)
+
\mathcal{P}_{i,m}^{\mathrm{loc}}(PZ_m^j).
\end{equation}

We first prove that the boundary terms $\mathcal{P}_{i,m}^{\mathrm{bd}}(PZ_m^j),\;(i,j=1,2)$ are negligible. Recall that \eqref{eq: definition of Zm012}, we have
\[
Z_m^1(x)=
\frac{\rho_m^2 \operatorname{Re}(x^2-b_m)}
{\rho_m^4+|x^2-b_m|^2},
\quad
Z_m^2(x)=
\frac{\rho_m^2 \operatorname{Im}(x^2-b_m)}
{\rho_m^4+|x^2-b_m|^2}.
\]
Since \(b_m\to0\) for \(m\) large enough, we have
\[
|x^2-b_m|\geq c>0
\quad \text{for all } x\in \partial B_1,
\]
then we obtain
\[
\|Z_m^j\|_{C^1(\partial B_1)}\leq C\rho_m^2,
\quad j=1,2.
\]
In particular,
\[
Z_m^j=O(\rho_m^2),
\qquad
\partial_\nu Z_m^j=O(\rho_m^2)
\quad \text{on } \partial B_1.
\]
Set
\begin{equation}\label{eq:definition of hjm}
    h_m^j:=PZ_m^j-Z_m^j.
\end{equation}
By the definition of the projection \(P\), we have
\begin{equation}\label{eq:Laplace hjm}
\Delta h_m^j=0 \quad \text{in } B_1,
\quad
h_m^j=-Z_m^j \quad \text{on } \partial B_1.
\end{equation}
Therefore, by the standard boundary estimates for harmonic functions,
\begin{equation}\label{eq:hjm C1 bounded}
\|h_m^j\|_{C^1(\overline{B_1})}
\leq C\|Z_m^j\|_{C^1(\partial B_1)}
\leq C\rho_m^2.
\end{equation}
Consequently,
\begin{equation}\label{eq:partial PZmj}
\partial_\nu(PZ_m^j)
=
\partial_\nu Z_m^j+\partial_\nu h_m^j
=
O(\rho_m^2)
\quad \text{on } \partial B_1.
\end{equation}

We proceed to show that $\partial_\nu v_m=O(1)$ on $\partial B_1$. Indeed, it follows from \eqref{eq:Pw expand} that
\begin{equation}\label{eq:partial PW bounded}
    \partial_\nu PW_m=O(1) \quad \text{on}\quad \partial B_1.
\end{equation}
It remains to show that $\partial_\nu \phi_m=O(1)$ on $\partial B_1$. Recall that \eqref{eq:correction term phi m equation}, we take $A_{r_{0}}:=\overline{B_1}\setminus B_{r_0}$, $0<r_0<1$. Then we can obtain the second term on the right hand side of \eqref{eq:correction term phi m equation} is bounded. By Lemma \ref{lem:M-T inequality}, we derive
\[
\left\|\lambda_m V(x)|x|^2 e^{P W_m}\left(e^{\phi_m}-1\right)\right\|_{L^q(A_{r_0})} \le C,
\]
where $q>2$. Arguing as in the proof of Lemma \ref{lem:the convergence of psi m tuta} above, we obtain for $r_{0}<r_{1}<1$
\[
\|\nabla\phi_m\|_{L^{\infty}(A_{r_{1}})}\leq C,
\]
which implies
\begin{equation}\label{eq:eq:partial phi m bounded}
    \partial_\nu \phi_m = O(1) \quad \text{on} \quad\partial B_1.
\end{equation}
By \eqref{eq:partial PZmj}, \eqref{eq:partial PW bounded} and \eqref{eq:eq:partial phi m bounded}, we get
\begin{equation}\label{eq:Pbd is o(1)}
    \mathcal{P}_{i,m}^{\mathrm{bd}}(PZ_m^j)=O(\rho_{m}^{2})=o(1),\quad\text{for} \quad i,j=1,2.
\end{equation}

Now we estimate the local term in \eqref{eq:Pim PZmj decompose}. By \eqref{eq:Laplace hjm} and \eqref{eq:hjm C1 bounded}, we derive
\begin{equation}
    |\nabla h_{j}^{m}(0)|\leq C \rho_{m}^{2},
\end{equation}
which, together with \eqref{eq:definition of hjm}, we arrive at
\begin{equation}\label{eq:nabla PZmj 0 is o(1)}
    |\nabla PZ_{m}^{j}(0)|=o(1),\quad j=1,2.
\end{equation}
A combination of \eqref{eq:derivative of vm is o(1)} and \eqref{eq:nabla PZmj 0 is o(1)} leads to
\begin{equation}\label{eq:Ploc is o(1)}
    \mathcal{P}_{i,m}^{\mathrm{loc}}(PZ_m^j)=O(\rho_{m}^{2})=o(1), \quad i,j=1,2.
\end{equation}
Substituting \eqref{eq:Pbd is o(1)} and \eqref{eq:Ploc is o(1)} into \eqref{eq:Pim PZmj decompose}, we obtain
\begin{equation}\label{eq:Pim PZmj=Pvol}
\mathcal{P}_{i,m}(PZ_m^j)=\mathcal{P}_{i,m}^{\mathrm{vol}}(PZ_m^j)
+o(1), \quad i,j=1,2.
\end{equation}

We first consider $\mathcal{P}_{1,m}^{\mathrm{vol}}(PZ_m^j)$ for $j=1,2$. By \eqref{eq:V}, we have
\begin{equation*}
\mathcal{P}_{1,m}^{\mathrm{vol}}(PZ_m^j)
=
\int_{B_1}
\lambda_m e^{v_m} PZ_m^j
\left(\gamma_1 x_1^2-\gamma_2 x_2^2\right)\,dx
+o(1).
\end{equation*}
By applying the change of variable $x=\rho_{m}y$ and recalling the definition of  $\xi_m$, we deduce
\begin{equation}\label{eq:Pvol PZmj}
\mathcal{P}_{1,m}^{\mathrm{vol}}(PZ_m^j)
= 32\int_{B_{1/\rho_m}}
\frac{(\gamma_1 y_1^2-\gamma_2 y_2^2)\zeta_{mj}}
{(1+|\zeta_m|^2)^3}E_m(y)e^{\phi_m(\rho_my)}\,dy
+o(1),
\end{equation}
where $E_m(y)$ is defined in \eqref{eq:definition of Emy} and $\zeta_m:=y^{2}-\xi_m$. We are going to show that
\begin{equation}\label{eq:Emy e phim-1}
    \int_{B_{1/\rho_m}}
\frac{(\gamma_1 y_1^2-\gamma_2 y_2^2)\zeta_{mj}}
{(1+|\zeta_m|^2)^3}(E_m(y)e^{\phi_m(\rho_my)}-1)\,dy=o(1).
\end{equation}
In fact, we decompose \eqref{eq:Emy e phim-1} as
\begin{equation*}
\begin{aligned}
 &\quad \int_{B_{1/\rho_m}}
\frac{(\gamma_1 y_1^2-\gamma_2 y_2^2)\zeta_{mj}}
{(1+|\zeta_m|^2)^3}(E_m(y)e^{\phi_m(\rho_my)}-1)\,dy\\
&=\int_{B_{1/\rho_m}}
K_{m,j}E_m(y)(e^{\phi_m(\rho_my)}-1)\,dy+\int_{B_{1/\rho_m}}
K_{m,j}(E_m(y)-1)\,dy,
\end{aligned}
\end{equation*}
where $K_{m,j}=\frac{(\gamma_1 y_1^2-\gamma_2 y_2^2)\zeta_{mj}}
{(1+|\zeta_m|^2)^3}$. A direct calculation shows that
\begin{equation}\label{eq:decay of Kmj}
    |K_{m,j}|\leq C\frac{|y|^{2}(1+|y|^{2})}{(1+|y|^{4})^{3}}\leq\frac{C}{1+|y|^{8}}\in L^{p}(\mathbb{R}^{2}),\quad p>1.
\end{equation}
Thanks to \eqref{eq:Gm2 pq' estimate}, \eqref{eq:phi m tuta to 0}, \eqref{eq:decay of Kmj} and the $L^{\infty}$-boundedness of $E_m(y)$, we obtain
\[
\begin{aligned}
&\quad\quad\left|
\int_{B_{1/\rho_m}}K_{m,j}E_m(y)
(e^{\phi_m(\rho_m y)}-1)\,dy
\right| \\
&\quad\le
C\|K_{m,j}\|_{L^p(\mathbb R^2)}
\|e^{\phi_m(\rho_m\cdot)}-1\|_{L^{p'}(B_{1/\rho_m})} \\
&\quad\le
C\|K_{m,j}\|_{L^p(\mathbb R^2)}
\rho_m^{-2/p'}
\|e^{\phi_m}-1\|_{p'} \\
&\quad\le
C\rho_m^{-2/p'}\|\phi_m\| \\
&\quad\le
C\rho_m^{2-4\varepsilon-\frac2{p'}}
=o(1),
\end{aligned}
\]
provided that \(p'\) is sufficiently large and \(\varepsilon>0\) is sufficiently small. For the second term, a direct computation gives
\begin{equation*}
    |E_m(y)-1|\leq C\rho_m^{2},\quad y\in B_{1/\rho_m}.
\end{equation*}
By \eqref{eq:decay of Kmj}, we obtain
\[
\left|\int_{B_{1/\rho_m}} K_{m,j}\bigl(E_m(y)-1\bigr)\,dy\right|
\leq \|E_m-1\|_{L^\infty(B_{1/\rho_m})}
\int_{B_{1/\rho_m}} |K_{m,j}|\,dy
\leq C\rho_m^2=o(1).
\]
Therefore \eqref{eq:Pvol PZmj} becomes
\begin{equation*}
\mathcal{P}_{1,m}^{\mathrm{vol}}(PZ_m^j)
= 32\int_{B_{1/\rho_m}}
\frac{(\gamma_1 y_1^2-\gamma_2 y_2^2)\zeta_{mj}}
{(1+|\zeta_m|^2)^3}\,dy
+o(1),\quad j=1,2.
\end{equation*}
By Theorem A, 
\[
\xi_m \to \xi=(\xi^{*},0)\in\mathbb{R}^{2}.
\]
For convenience, set 
\[
s:=\xi^{*}\in\mathbb{R}.
\]
Making the change of variables $\zeta=y^{2}-\xi$, we obtain
\begin{align*}
\mathcal{P}_{1,m}^{\mathrm{vol}}(PZ_m^j)
&=
32\int_{\mathbb R^2}
\frac{1}{2}
\left[
(\gamma_1-\gamma_2) r_{s}(\zeta)
+
(\gamma_1+\gamma_2)(\zeta_1+s)
\right]
\frac{\zeta_j}{(1+|\zeta|^2)^3}
\frac{1}{2r_{s}(\zeta)}\,d\zeta
+o(1) \notag \\
&=
8\int_{\mathbb R^2}
\left[
(\gamma_1-\gamma_2)
+
(\gamma_1+\gamma_2)\frac{\zeta_1+s}{r_{s}(\zeta)}
\right]
\frac{\zeta_j}{(1+|\zeta|^2)^3}\,d\zeta
+o(1),
\end{align*}
where $r_{s}(\zeta)=\sqrt{(\zeta_1+s)^2+\zeta_2^2}.$ 

If $j=2$, by oddness we deduce $\mathcal{P}_{1,m}^{\mathrm{vol}}(PZ_m^2)=o(1)$, which means
\begin{equation}\label{eq:P1mPzm2}
    \mathcal{P}_{1,m}(PZ_m^2)=o(1).
\end{equation}
If $j=1$, we derive
\begin{equation*}
  \mathcal{P}_{1,m}^{\mathrm{vol}}(PZ_m^1)  =
8(\gamma_1+\gamma_2)
\int_{\mathbb R^2}
\frac{(\zeta_1+s)\zeta_1}
{r_{s}(\zeta)(1+|\zeta|^2)^3}\,d\zeta+o(1).
\end{equation*}
Then we have
\begin{equation}\label{eq:P1mPzm1}
    \mathcal{P}_{1,m}(PZ_m^1)=8(\gamma_1+\gamma_2)I_1(s)+o(1),
\end{equation}
where $I_1(s):=\int_{\mathbb R^2}\frac{(\zeta_1+s)\zeta_1}{r_{s}(\zeta)(1+|\zeta|^2)^3}\,d\zeta$.

Similarly, we also derive
\begin{equation*}
\mathcal{P}_{2,m}^{\mathrm{vol}}(PZ_m^j)=8(\gamma_1+\gamma_2)
\int_{\mathbb R^2}
\frac{\zeta_2\zeta_j}
{r_{s}(\zeta)(1+|\zeta|^2)^3}\,d\zeta
+o(1).
\end{equation*}
By oddness again we have
\begin{equation}\label{eq:P2mPzm1}
    \mathcal{P}_{2,m}(PZ_m^1)=o(1).
\end{equation}
Moreover, we get
\begin{equation}\label{eq:P2mPZm2}
    \mathcal{P}_{2,m}(PZ_m^2)=8(\gamma_1+\gamma_2)I_2(s)+o(1),
\end{equation}
where $I_2(s):=\int_{\mathbb R^2}\frac{\zeta_2^{2}}{r_{s}(\zeta)(1+|\zeta|^2)^3}\,d\zeta$.

Using \eqref{eq:P1mPzm2}, \eqref{eq:P1mPzm1}, \eqref{eq:P2mPzm1}, \eqref{eq:P2mPZm2}, one has
\begin{equation}\label{eq:matrix for PimPZjm}
\begin{pmatrix}
\mathcal P_{1,m}(PZ_m^1) & \mathcal P_{1,m}(PZ_m^2) \\
\mathcal P_{2,m}(PZ_m^1) & \mathcal P_{2,m}(PZ_m^2)
\end{pmatrix}
=
8(\gamma_1+\gamma_2)
\begin{pmatrix}
I_1(s) & 0 \\
0 & I_2(s)
\end{pmatrix}
+o(1).
\end{equation}
Inserting \eqref{eq:matrix for PimPZjm} into \eqref{eq:systerm for Pim}, we obtain
\[
\left[
8(\gamma_1+\gamma_2)
\begin{pmatrix}
I_1(s) & 0 \\
0 & I_2(s)
\end{pmatrix}
+o(1)
\right]
\begin{pmatrix}
c_{1,m} \\
c_{2,m}
\end{pmatrix}
=
o(1)(|c_{1,m}|+|c_{2,m}|).
\]
We can verify $I_{2}(s)>0$ and by Lemma 3.10 in \cite{D-W-Z2026}, $I_{1}(s)\neq0$. Moreover, it follows from \eqref{eq:hessian V is nondegenerate} that $\gamma_1+\gamma_2\ne0$. Hence $c_{1,m}=c_{2,m}=0$.
\end{proof}

We now give the proof of Theorem \ref{thm nondegenerate}.
\begin{proof}
 By Lemma \ref{lem:c1m=c2m=0} and \eqref{decompose psi m}, we deduce $\psi_m=\psi_{m}^{\ast}\in K_{m}^{\perp}$. Then Lemma \ref{lem:estimate for psi m star} yields
\[
\|\psi_m^*\|
\leq
C\lambda_m^{\frac14+\eta}
(|c_{1,m}|+|c_{2,m}|)=0.
\]
Consequently $\|\psi_m\|=0$, which contradicts the normalization $\|\psi_m\|=1$. Therefore, the kernel of $\widehat{L}_{m}$ is trivial, and the solution $v_m$ is non-degenerate.
\end{proof}

\section{Local uniqueness}
In this section, we prove Theorem \ref{thm:local uniqueness} via contradiction arguments. Suppose that $m\to+\infty$, $v_m^{(1)}\not\equiv v_m^{(2)}.$ Let
\[
\eta_m
:=
\frac{v_m^{(1)}-v_m^{(2)}}
{\|v_m^{(1)}-v_m^{(2)}\|}.
\]
Then $\|\eta_m\|=1$  and $\eta_m$ satisfies
\begin{equation}\label{eq:eta m equation}
    \begin{cases}
-\Delta \eta_m
=
\lambda_m V(x)|x|^2 C_m(x)\eta_m,
& x\in B_1,\\[2mm]
\eta_m=0,
& x\in \partial B_1,
\end{cases}
\end{equation}
where
\begin{equation}\label{eq:Cm}
C_m(x)
:=
\int_0^1
e^{t v_m^{(1)}(x)+(1-t)v_m^{(2)}(x)}
\,dt .
\end{equation}

Let
\[
\begin{cases}
-\Delta \bar{v}_{m}
=
\frac{\lambda_{m}}{2} V(x)|x|^2(e^{v_{m}^{(1)}}+e^{v_{m}^{(2)}}),
& x\in B_1,\\[2mm]
\bar{v}_{m}=0,
& x\in \partial B_1,
\end{cases}
\]
where $\bar{v}_{m}=\frac{v_{m}^{(1)}+v_{m}^{(2)}}{2}$. 

Repeating the argument used in Lemma \ref{lem:Pohozaev identity}, we have the following identities.
\begin{lemma}\label{lem:Pohozaev identity for eta}
It holds
\begin{equation*}
\begin{aligned}
&\lambda_{m}\int_{B_1} C_m(x)\eta_{m}
\left(
\frac{\partial V}{\partial x_1}x_1
-
\frac{\partial V}{\partial x_2}x_2
\right)\,dx  =\\
&\quad
\int_{\partial B_1}
\frac{\partial \bar{v}_m}{\partial \nu}
\frac{\partial \eta_m}{\partial \nu}
(x_1^2-x_2^2)\,d\sigma
-2\pi
\frac{\partial \bar{v}_{m}}{\partial x_1}(0)
\frac{\partial \eta_m}{\partial x_1}(0)
+
2\pi
\frac{\partial \bar{v}_{m}}{\partial x_2}(0)
\frac{\partial \eta_m}{\partial x_2}(0),
\end{aligned}
\end{equation*}
and
\begin{equation*}
\begin{aligned}
&\lambda_{m}\int_{B_1}C_m(x)\eta_m
\left(
\frac{\partial V}{\partial x_1}x_2
+
\frac{\partial V}{\partial x_2}x_1
\right)\,dx =\\
&\quad
2\int_{\partial B_1}
\frac{\partial \bar{v}_{m}}{\partial \nu}
\frac{\partial \eta_m}{\partial \nu}
x_1x_2\,d\sigma
-2\pi
\left[
\frac{\partial \bar{v}_{m}}{\partial x_1}(0)
\frac{\partial \eta_m}{\partial x_2}(0)
+
\frac{\partial \bar{v}_m}{\partial x_2}(0)
\frac{\partial \eta_m}{\partial x_1}(0)
\right].
\end{aligned}
\end{equation*}
\end{lemma}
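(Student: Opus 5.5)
The plan is to repeat the proof of Lemma \ref{lem:Pohozaev identity}, with the pair $(v,\psi)$ replaced by $(\bar v_m,\eta_m)$. I will use the same vector fields $X$ from \eqref{eq:X(x)} and $Y$ from \eqref{eq:Y(x)} on $\Omega_\varepsilon=B_1\setminus B_\varepsilon$. The left-hand side computation in Lemma \ref{lem:Pohozaev identity} never used the form of the right-hand sides of \eqref{eq:main-equation} and \eqref{eq:linearized-equation}. It used only three things: the pointwise identity \eqref{eq:expand nabla}, the fact that $X$ and $Y$ are conformal Killing-type fields with the derivatives \eqref{partian X1 and X2} and \eqref{partian Y1 and Y2}, and the homogeneous Dirichlet data. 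Those derivative formulas make the interior quadratic terms cancel, and the Dirichlet data give $\nabla\bar v_m=\partial_\nu\bar v_m\,x$ and $\nabla\eta_m=\partial_\nu\eta_m\,x$ on $\partial B_1$.

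Concretely, I would multiply the $\bar v_m$-equation by $X\cdot\nabla\eta_m$ and \eqref{eq:eta m equation} by $X\cdot\nabla\bar v_m$, add them, and integrate over $\Omega_\varepsilon$. The left side reduces verbatim to
\[
-\int_{\partial B_1}\partial_\nu\bar v_m\,\partial_\nu\eta_m\,(x_1^2-x_2^2)\,d\sigma+2\pi\,\partial_{x_1}\bar v_m(0)\,\partial_{x_1}\eta_m(0)-2\pi\,\partial_{x_2}\bar v_m(0)\,\partial_{x_2}\eta_m(0)+o(1).
\]
This needs $\bar v_m,\eta_m\in C^1$ near $0$, which follows from elliptic regularity, since both right-hand sides are bounded for fixed $m$.

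The only genuinely new step is the right-hand side. In Lemma \ref{lem:Pohozaev identity} the two source terms combined into the exact derivative $X\cdot\nabla(e^v\psi)$. Here I have $\frac{\lambda_m}{2}V|x|^2(e^{v^{(1)}_m}+e^{v^{(2)}_m})\,X\cdot\nabla\eta_m+\lambda_m V|x|^2C_m\eta_m\,X\cdot\nabla\bar v_m$, and I want to write it as $\lambda_m V|x|^2\,X\cdot\nabla(C_m\eta_m)$ up to the normalizing factor. The natural tool is the exact identity
\[
C_m=\frac{e^{v^{(1)}_m}-e^{v^{(2)}_m}}{v^{(1)}_m-v^{(2)}_m},
\qquad
C_m\eta_m=\frac{e^{v^{(1)}_m}-e^{v^{(2)}_m}}{\|v^{(1)}_m-v^{(2)}_m\|}.
\]
I expect this to be the main obstacle, because the mixed products do not obviously combine. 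My first attempt is to derive the identity directly from the two nonlinear equations rather than from the $(\bar v,\eta)$ system. I would apply the Pohozaev computation of Lemma \ref{lem:Pohozaev identity} to the bilinear form $B(u,w)=\int[(-\Delta u)(X\cdot\nabla w)+(-\Delta w)(X\cdot\nabla u)]$. Since $B$ is symmetric and bilinear, $B(v^{(1)},v^{(1)})-B(v^{(2)},v^{(2)})=2B(\bar v_m,v^{(1)}-v^{(2)})$, so after dividing by the norm the left side is exactly $2B(\bar v_m,\eta_m)$. On the right side, each $B(v^{(i)},v^{(i)})$ equals $2\lambda_m\int V|x|^2X\cdot\nabla e^{v^{(i)}}$, so the difference is $2\lambda_m\int V|x|^2X\cdot\nabla(C_m\eta_m)\|v^{(1)}-v^{(2)}\|$.

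Next I would integrate by parts as in \eqref{eq:right simplify}. The boundary term vanishes because $C_m\eta_m=0$ on $\partial B_1$. The term at $\partial B_\varepsilon$ is $O(\varepsilon^2)$ because of the weight $|x|^2|X|\sim|x|$. Finally, $\operatorname{div}(V|x|^2X)=V_{x_1}x_1-V_{x_2}x_2$, which yields $-\lambda_m\int C_m\eta_m(V_{x_1}x_1-V_{x_2}x_2)$, and letting $\varepsilon\to0$ gives the first identity. The second identity follows in the same way with $Y$, using \eqref{partian Y1 and Y2} and $\operatorname{div}(V|x|^2Y)=V_{x_1}x_2+V_{x_2}x_1$, exactly as in \eqref{eq:right simplify Y}.
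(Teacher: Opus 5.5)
Your proposal is correct and follows the paper's route, which is to repeat the proof of Lemma~\ref{lem:Pohozaev identity} with $(v,\psi)$ replaced by $(\bar v_m,\eta_m)$. The only new ingredient is that the source terms combine into $\lambda_m V|x|^2\,X\cdot\nabla(C_m\eta_m)$, and your polarization argument establishes this. It is the same as the pointwise identity $\tfrac12\bigl(e^{v_m^{(1)}}+e^{v_m^{(2)}}\bigr)X\cdot\nabla\eta_m+C_m\eta_m\,X\cdot\nabla\bar v_m=X\cdot\nabla(C_m\eta_m)$, which follows from $C_m\eta_m=(e^{v_m^{(1)}}-e^{v_m^{(2)}})/\|v_m^{(1)}-v_m^{(2)}\|$ by a one-line expansion. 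So the obstacle you anticipated is not a real difficulty.
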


Before rescaling, we first compare the two concentration
parameters. For $i=1,2$, set
\[
\xi_m^{(i)}:=\frac{b_m^{(i)}}{\rho_m^2},
\qquad
\rho_m=\left(\frac{\lambda_m}{32}\right)^{1/4}.
\]
By \eqref{eq:local uniqueness assumption}, the sequences $\{\xi_m^{(i)}\}$ are bounded. Hence,
after passing to a subsequence, we may assume that
\[
\xi_m^{(1)}\rightarrow \xi
\qquad\text{for some }\xi\in\mathbb R^2.
\]
Moreover, in view of \eqref{eq:bm1-bm2},
\[
|\xi_m^{(1)}-\xi_m^{(2)}|
=
\frac{|b_m^{(1)}-b_m^{(2)}|}{\rho_m^2}
=
O(\lambda_m^\sigma)
\longrightarrow 0.
\]
Therefore,
\[
\xi_m^{(2)}\longrightarrow \xi.
\]
By the classification of the limiting profile in \cite{D-W-Z2026},
we have
\[
\xi=(\xi^{*},0)
\qquad\text{for some }\xi^{*}\in\mathbb R.
\]
Thus, the two rescaled concentration parameters have the same limit.

 Let
$\widetilde{\eta}_{m}(y)=\eta_{m}(\rho_{m}y)$. Since
\[
\xi_m^{(i)}\longrightarrow\xi=(\xi^{*},0),
\qquad i=1,2,
\]
the same rescaling argument as in Lemma \ref{lem:the convergence of psi m tuta} yields the following
result.
\begin{lemma}\label{lem:the convergence of eta m tuta}
It holds
\begin{equation*}
\widetilde\eta_m
\longrightarrow
a_0Z_0+a_1Z_1+a_2Z_2,
\end{equation*}
uniformly in $C^1(B_R(0))$ for any $R>0$, where $a_{0}$, $a_1$ and $a_2$ are some constants and $Z_0$, $Z_1$, $Z_2$ are functions defined in \eqref{eq:Z012} with $c=\xi\in\mathbb{R}^{2}$.
\end{lemma}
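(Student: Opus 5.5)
The plan is to repeat the proof of Lemma~\ref{lem:the convergence of psi m tuta}, with the coefficient $\lambda_m V|x|^2e^{v_m}$ replaced by $\lambda_m V|x|^2C_m(x)$. The first step is to show that, for each $i=1,2$,
\[
\widetilde v_m^{(i)}(y):=v_m^{(i)}(\rho_m y)+2\log\lambda_m-\log 32\longrightarrow U_\xi \quad\text{in } C^1_{\mathrm{loc}}(\mathbb R^2).
\]
This is the argument given there, using only the hypotheses in \eqref{eq:local uniqueness assumption}:
\begin{itemize}
\item By \eqref{eq:Pw expand}, $PW_{\lambda_m,b_m^{(i)}}(\rho_m y)+2\log\lambda_m-\log32=\log\frac{32}{(1+|y^2-\xi_m^{(i)}|^2)^2}+o(1)$, and this tends to $U_\xi$ because $\xi_m^{(i)}\to\xi$.
\item The rescaled error $\widetilde\phi_m^{(i)}(y)=\phi_m^{(i)}(\rho_m y)$ tends to $0$ in $C^1_{\mathrm{loc}}$. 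This follows from $\|\phi_m^{(i)}\|\le\lambda_m^{1/2-\varepsilon}$, the rescaled $H^1$ bounds \eqref{eq:nabla 2 norm to 0}--\eqref{eq:2 norm to 0}, equation \eqref{eq:Delta tilde phi m} for $\widetilde\phi_m^{(i)}$, Lemma~\ref{lem:M-T inequality}, interior $W^{2,q}$ estimates and Morrey's embedding.
\end{itemize}
That proof used only the a priori bounds from Theorem A and $\xi_m\to\xi$, and both are available for each $i$.

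The second step is to rescale equation \eqref{eq:eta m equation}. Setting $\widetilde\eta_m(y)=\eta_m(\rho_m y)$ gives $-\Delta\widetilde\eta_m=\widetilde\Phi_m\widetilde\eta_m$, where
\[
\widetilde\Phi_m(y)=|y|^2V(\rho_m y)\int_0^1 e^{t\widetilde v_m^{(1)}(y)+(1-t)\widetilde v_m^{(2)}(y)}\,dt .
\]
Here I use $\rho_m^2\lambda_m\rho_m^2=\lambda_m^2/32$, so the shift $2\log\lambda_m-\log32$ distributes over the convex combination. Both $\widetilde v_m^{(i)}$ converge locally uniformly to the same $U_\xi$, so the integrand converges uniformly in $t$ on compact sets. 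Hence $\widetilde\Phi_m\to\Phi_\xi=|y|^2e^{U_\xi}$ locally uniformly. This is the one genuinely new point, and it relies on $\xi^{(1)}=\xi^{(2)}=\xi$, which was established above from \eqref{eq:bm1-bm2}.

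The third step is local compactness, done exactly as before. From $\|\eta_m\|=1$ we get $\int_{B_R}|\nabla\widetilde\eta_m|^2\le1$. Testing \eqref{eq:eta m equation} with $\eta_m$ gives $\int\widetilde\Phi_m\widetilde\eta_m^2=1$. Since $\widetilde\Phi_m\ge c_R>0$ on the annulus $A_R$, we get an $L^2(A_R)$ bound. Inequality \eqref{eq: a elementary ineq} then gives a uniform $H^1(B_R)$ bound. Extracting a diagonal subsequence, $\widetilde\eta_m\rightharpoonup\eta_0$ in $H^1_{\mathrm{loc}}$ and $\widetilde\eta_m\to\eta_0$ strongly in $L^q_{\mathrm{loc}}$.

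The fourth step identifies the limit and upgrades the convergence.
\begin{itemize}
\item Passing to the limit in the weak formulation shows $-\Delta\eta_0=\Phi_\xi\eta_0$ on $\mathbb R^2$.
\item Lower semicontinuity gives $\int_{\mathbb R^2}|\nabla\eta_0|^2\le1$.
\item Lemma~\ref{lem:nondegeneracy of finite mass solutions of the singular Liouville equation} with $c=\xi$ then gives $\eta_0=a_0Z_0+a_1Z_1+a_2Z_2$.
\item To get $C^1(B_R)$ convergence, note that $\widetilde\Phi_m$ is locally bounded in $L^\infty$, so $\Delta\widetilde\eta_m\to\Delta\eta_0$ in $L^q_{\mathrm{loc}}$. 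Interior $W^{2,q}$ estimates for $q>2$ and Morrey's embedding then conclude.
\end{itemize}

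The main obstacle is the uniform convergence of $C_m$ after rescaling, that is, making sure the two bubbles do not separate at scale $\rho_m$. This is where the closeness hypothesis enters. It is already handled by $|\xi_m^{(1)}-\xi_m^{(2)}|=O(\lambda_m^\sigma)\to0$, so here only convergence to a common $\xi$ is needed, not a rate. The rate $\sigma>\frac14$ will only matter later, when the $R_{j,m}$-type errors are estimated.
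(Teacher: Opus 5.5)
Your proposal is correct and takes the same approach as the paper, which proves this lemma just by invoking the rescaling argument used for $\widetilde\psi_m$ in Section 3 together with $\xi_m^{(1)},\xi_m^{(2)}\to\xi$. Your write-up supplies the details that argument needs, and the only new ingredient is the one you identify: the rescaled averaged coefficient $|y|^2V(\rho_m y)\int_0^1 e^{t\widetilde v_m^{(1)}+(1-t)\widetilde v_m^{(2)}}\,dt$ converges locally uniformly to $|y|^2e^{U_\xi}$, because both rescaled solutions converge to the same profile $U_\xi$.
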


To simplify the notation, we shall write
\[
b_m:=b_m^{(1)},
\qquad
\xi_m:=\xi_m^{(1)}
=\frac{b_m^{(1)}}{\rho_m^2},
\]
and
\[
W_m:=W_m^{(1)},
\qquad
Z_m^j:=Z_m^{j,(1)},
\qquad
PZ_m^j:=PZ_m^{j,(1)},
\qquad j=1,2.
\]
As in \eqref{decompose psi m}, we decompose
\begin{equation}\label{decompose eta m}
    \eta_{m}=d_{1,m}PZ_{m}^{1}+d_{2,m}PZ_{m}^{2}+\eta_{m}^{\ast},
\end{equation}
where $\eta_{m}^{\ast}\in K_{m}^{\perp}$, namely
\[
\int_{B_1}\nabla\eta_m^*\nabla PZ_m^j\,dx=0,
\quad j=1,2.
\]
It follows from Lemma \ref{lem:the convergence of eta m tuta} that $d_{1,m}$ and $d_{2,m}$ are bounded.
\begin{lemma}\label{lem:estimate for eta m star}
There exists $\sigma'>0$ such that
\begin{equation*}
\|\eta_{m}^{\ast}\|\leq C\lambda_{m}^{\frac{1}{4}+\sigma'}(|d_{1,m}|+|d_{2,m}|)= o(1)(|d_{1,m}|+|d_{2,m}|).
\end{equation*}
\end{lemma}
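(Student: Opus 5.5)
I will follow the proof of Lemma~\ref{lem:estimate for psi m star}, with $v_m$ replaced by the averaged linearization around $v_m^{(1)}$. In the $i_p^*$ formulation, \eqref{eq:eta m equation} reads
\[
\eta_m-i_p^*\bigl(\lambda_m V|x|^2C_m\eta_m\bigr)=0 .
\]
Inserting the decomposition \eqref{decompose eta m} and applying $\Pi_m^\perp$ gives
\[
\widetilde L_m^{C}\eta_m^*=-\sum_{j=1}^2 d_{j,m}R^{C}_{j,m},
\qquad
\widetilde L_m^{C}:=\Pi_m^\perp\bigl(I-i_p^*(\lambda_mV|x|^2C_m\,\cdot)\bigr)\big|_{K_m^\perp},
\]
and
\[
R^{C}_{j,m}=\Pi_m^\perp i_p^*\bigl[|x|^2e^{W_m}(Z_m^j-PZ_m^j)+(|x|^2e^{W_m}-\lambda_mV|x|^2C_m)PZ_m^j\bigr].
\]
Here $W_m,Z_m^j,PZ_m^j$ are built on $b_m=b_m^{(1)}$. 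The claimed bound follows once two facts are established. First, $\|(\widetilde L_m^{C})^{-1}\|\le C|\log\lambda_m|$. Second, $\|R^C_{j,m}\|\le \lambda_m^{\frac14+\sigma''}o(1/|\log\lambda_m|)$ for some $\sigma''>0$.

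\textbf{Reducing $C_m$ to the first solution.} Write $v_m^{(2)}=PW_m^{(1)}+\phi_m^{(2)}+(PW_m^{(2)}-PW_m^{(1)})$. By \eqref{eq:Pw expand}, $PW_m^{(2)}-PW_m^{(1)}$ is controlled by $-2\log$ of the ratio of the two denominators plus a harmonic correction. The quotient $\frac{|x^2-b^{(2)}|^2+\rho_m^4}{|x^2-b^{(1)}|^2+\rho_m^4}$ differs from $1$ by at most $C|b^{(1)}-b^{(2)}|/\rho_m^2=O(\lambda_m^\sigma)$, uniformly in $B_1$. Hence
\[
C_m=e^{PW_m^{(1)}}e^{\theta_m}\bigl(1+O(\lambda_m^\sigma)\bigr),
\]
where $\theta_m$ is a convex combination of $\phi_m^{(1)}$ and $\phi_m^{(2)}$. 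It satisfies $\|e^{\theta_m}-1\|_r\le C\lambda_m^{1/2-\varepsilon}$, by convexity of the exponential together with \eqref{eq: 1-e^phi Lp estimate}.

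\textbf{The remainders $R^C_{j,m}$.} With this reduction, the estimate of $R^{(1)}$ is unchanged. The estimate of $R^{(2)}$ repeats the $G_{m,1},G_{m,2}$ splitting. The only new contribution is
\[
\bigl\||x|^2e^{W_m}\bigr\|_p\,O(\lambda_m^\sigma)=O\bigl(\lambda_m^{\sigma-\frac{p-1}{2p}}\bigr)
\]
from Lemma~\ref{lem:zhangLp}. Since $\sigma>\frac14$, choosing $p$ close to $1$ makes this exponent larger than $\frac14+\sigma''$ for some small $\sigma''>0$. This is exactly where the hypothesis \eqref{eq:bm1-bm2} enters.

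\textbf{Invertibility.} Write $\widetilde L_m^{C}=-(L_m+H_m^{C})$, where
\[
H_m^C\psi=\Pi_m^\perp i_p^*\bigl[\lambda_mV|x|^2e^{PW_m}\bigl(e^{\theta_m}(1+O(\lambda_m^\sigma))-1\bigr)\psi\bigr].
\]
Using H\"older's inequality, Lemma~\ref{lem:zhangLp} with exponent $pq$, and the Moser--Trudinger inequality (Lemma~\ref{lem:M-T inequality}), I expect
\[
\|H_m^C\|\le C\lambda_m^{-\frac{pq-1}{2pq}}\bigl(\lambda_m^{\frac12-\varepsilon}+\lambda_m^{\sigma}\bigr)=o\bigl(1/|\log\lambda_m|\bigr).
\]
Proposition~3.7 of \cite{D-W-Z2026} then gives $\|L_m^{-1}\|\le C|\log\lambda_m|$, and a Neumann series yields the bound for $(\widetilde L_m^{C})^{-1}$. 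Combining everything,
\[
\|\eta_m^*\|\le C|\log\lambda_m|\,\lambda_m^{\frac14+\sigma''}o\bigl(1/|\log\lambda_m|\bigr)(|d_{1,m}|+|d_{2,m}|),
\]
so the lemma holds with $\sigma'=\sigma''$.

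\textbf{Main obstacle.} The delicate step is the uniform comparison of $e^{PW_m^{(2)}}$ with $e^{PW_m^{(1)}}$ near the concentration region, where $|x^2-b|\sim\rho_m^2$. There the ratio bound $O(|b^{(1)}-b^{(2)}|/\rho_m^2)$ must hold uniformly in $B_1$, not just away from the origin. I would check this by writing $|x^2-b^{(2)}|^2=|x^2-b^{(1)}|^2+O\bigl(|b^{(1)}-b^{(2)}|(|x^2-b^{(1)}|+|b^{(1)}-b^{(2)}|)\bigr)$ and dividing by $\rho_m^4+|x^2-b^{(1)}|^2$.
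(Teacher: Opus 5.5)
Your proposal is correct and follows essentially the same route as the paper: the same projected operator $\widetilde L_m^C$ and the same splitting of $R^C_{j,m}$. In both, the hypothesis $\sigma>\frac14$ enters through the uniform $O(\lambda_m^{\sigma})$ bound on $PW_m^{(2)}-PW_m^{(1)}$ (the paper's $B_{m,1}$ and $\Psi_m$), and the inverse bound comes from perturbing Proposition 3.7 of \cite{D-W-Z2026}; your version is more explicit than the paper's sketch about the uniform ratio estimate and about the size of the perturbation $H_m^C$.
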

\begin{proof}
The proof follows a strategy similar to that used in Lemma \ref{lem:estimate for psi m star}. For the reader's convenience, we give the sketch of the proof.  We define
\begin{equation*}
    T_m^C(\eta):=\eta-i_p^*\bigl(\lambda_m V(x)|x|^2 C_m(x)\eta\bigr),
\end{equation*}
then it follows from \eqref{eq:eta m equation} that $T_m^C(\eta_m)=0.$ Substituting \eqref{decompose eta m} into this, we obtain
\begin{equation*}
\widetilde{L}_m^C \eta_m^*
=
-\sum_{j=1}^2 d_{j,m}R_{j,m}^{C},
\end{equation*}
where $\widetilde{\mathcal{L}}_m^C := \Pi_m^\perp T_m^C\big|_{K_m^\perp}$, \;$R_{j,m}^C := \Pi_m^\perp T_m^C(PZ_m^j),$ \;$j=1,2$. 

Proceeding as in \eqref{eq:Ds lambda vm}-\eqref{eq:Rjm}, we decompose $R_{j,m}^{C}$ as
\[
R_{j,m}^C
=R_{j,m}^{C,(1)}+R_{j,m}^{C,(2)},
\]
where
\begin{equation*}
R_{j,m}^{C,(1)}
=
\Pi_m^\perp i_p^*
\left[
|x|^2 e^{W_m}\bigl(Z_m^j-PZ_m^j\bigr)
\right],
\end{equation*}
and
\begin{equation*}
R_{j,m}^{C,(2)}
=
\Pi_m^\perp i_p^*
\left[
\bigl(|x|^2 e^{W_m}-\lambda_m V(x)|x|^2 C_m(x)\bigr)PZ_m^j
\right].
\end{equation*}
The estimate for $R_{j,m}^{C,(1)}$ is unchanged, and hence we obtain \begin{equation}\label{eq:Rjm1C estimate}
\|R_{j,m}^{C,(1)}\|
=\lambda _{m}^{\frac{1}{4}+\sigma_1}
o\left(\frac{1}{|\log\lambda_m|}\right).
\end{equation}

We next deal with $R_{j,m}^{C,(2)}$. By \eqref{eq:Cm} and \eqref{eq:Pw expand}, we deduce
\[|x|^2e^{W_m}-\lambda_m V(x)|x|^2C_m(x)
=
|x|^2e^{W_m}\bigl(1-V(x)\Theta_m(x)\bigr),\]
where we used
\begin{equation}\label{eq:lambda m Cm}
\lambda_m C_m(x)=e^{W_m}\Theta_m(x),
\end{equation}
and
\[
\Theta_m(x):=
\int_0^1
e^{(1-t)(W_m^{(2)}-W_m)+t(A_m^{(1)}+\phi_m^{(1)})+(1-t)(A_m^{(2)}+\phi_{m}^{(2)})}\,dt,\quad A_m^{(i)} := 8\pi H(x^2,b_m^{(i)})+ r_m^{(i)} .
\]
A direct computation shows
\[
\begin{aligned}
1-V(x)\Theta_m(x)
&=
(1-V(x))
+
V(x)\left(
1-
\int_0^1
e^{(1-t)(W_m^{(2)}-W_m)}
e^{tA_m^{(1)}+(1-t)A_m^{(2)}}\,dt
\right) \\
&\quad
+
V(x)\int_0^1
e^{(1-t)(W_m^{(2)}-W_m)}
e^{tA_m^{(1)}+(1-t)A_m^{(2)}}
\left(
1-e^{t\phi_m^{(1)}+(1-t)\phi_m^{(2)}}
\right)\,dt \\
&:=B_{m,0}+B_{m,1}+B_{m,2}.
\end{aligned}
\]
It follows from \eqref{eq:V} and Lemma \ref{lem:zhangLp} that
\begin{equation}\label{eq:Bm0 estimate}
    \bigl\||x|^2 e^{W_m} B_{m,0}\bigr\|_{p}
\leq C\lambda_m^{\frac{1}{2p}}.
\end{equation}
By \eqref{eq:bm1-bm2}, \eqref{eq:bubble} and $O(\lambda_m)$ estimate for $r_{m}^{(j)}$, we get
\begin{equation*}
    \|B_{m,1}\|_{\infty}=O(\lambda_{m}^{\frac{1}{4}+\epsilon}),
\end{equation*}
which indicates
\[
\bigl\||x|^2 e^{W_m} B_{m,1}\bigr\|_{p}
\leq C\lambda_m^{\frac{1}{4}+\epsilon-\frac{p-1}{2p}}.
\]
By \eqref{eq:local uniqueness assumption}, Lemmas \ref{lem:M-T inequality} and \ref{lem:zhangLp}, as well as H${\rm \ddot{o}}$lder's inequality, we deduce
\begin{equation}\label{eq:Bm2 estimate}
\bigl\||x|^2 e^{W_m} B_{m,2}\bigr\|_{p}
\leq
C\lambda_m^{\frac{1}{2}-\epsilon-\frac{pq-1}{2pq}}.
\end{equation}
Combining \eqref{eq:Bm0 estimate}-\eqref{eq:Bm2 estimate} and choosing $p,q\to1$, we obtain
\begin{equation*}
\bigl\||x|^2 e^{W_m}
-
\lambda_m V(x)|x|^2 C_m(x)
\bigr\|_{p}
=\lambda_{m}^{\frac{1}{4}+\sigma_2}
o\left(\frac{1}{|\log \lambda_m|}\right),
\end{equation*}
which yields
\begin{equation}\label{eq:Rjm2C estimate}
\|R_{j,m}^{C,(2)}\|
=\lambda _{m}^{\frac{1}{4}+\sigma_2}
o\left(\frac{1}{|\log\lambda_m|}\right).
\end{equation}
 A combination of \eqref{eq:Rjm1C estimate} and \eqref{eq:Rjm2C estimate} leads to
 \begin{equation*}
\|R_{j,m}^{C}\|
\leq
\lambda_m^{\frac14+\sigma'}
o\left(\frac{1}{|\log \lambda_m|}\right),
\end{equation*}
where $\sigma'=\min\{\sigma_1,\sigma_2\}>0.$

The invertibility analysis follows the same strategy as in Lemma \ref{lem:estimate for psi m star}. Hence we obtain
\begin{equation*}
    \|(\widetilde L_m^C)^{-1}\|
\leq C|\log\lambda_m|.
\end{equation*}
Therefore, we conclude that there exists $\sigma'>0$ such that
\begin{align*}
    \|\eta_{m}^{\ast}\|&\leq C |\log\lambda_m|\sum\limits_{j=1}^{2}|d_{j,m}\|R_{j,m}^{C}\|\\
    &\leq C\lambda_m^{\frac14+\sigma'}(|d_{1,m}|+|d_{2,m}|)\\
    &=o(1)(|d_{1,m}|+|d_{2,m}|).
\end{align*}
\end{proof}

\begin{lemma}\label{lem:d1m=d2m=0}
Assume that \eqref{eq:hessian V is nondegenerate} holds. Then for $m$ sufficiently large,
\[
d_{1,m}=d_{2,m}=0.
\]
\end{lemma}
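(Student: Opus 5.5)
We mimic the proof of Lemma \ref{lem:c1m=c2m=0}, now working with the Pohozaev identities of Lemma \ref{lem:Pohozaev identity for eta}. Define the two linear functionals
\[
\mathcal{Q}_{1,m}(\zeta):=\int_{B_1}\lambda_m C_m\zeta\,(V_{x_1}x_1-V_{x_2}x_2)\,dx-\int_{\partial B_1}\partial_\nu\bar v_m\,\partial_\nu\zeta\,(x_1^2-x_2^2)\,d\sigma+2\pi\bar v_{m,x_1}(0)\zeta_{x_1}(0)-2\pi\bar v_{m,x_2}(0)\zeta_{x_2}(0).
\]
Define $\mathcal{Q}_{2,m}$ analogously from the second identity, and split each $\mathcal{Q}_{i,m}$ into volume, boundary and local parts. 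By Lemma \ref{lem:Pohozaev identity for eta} we have $\mathcal{Q}_{i,m}(\eta_m)=0$. Inserting the decomposition \eqref{decompose eta m} then gives the linear system
\[
\sum_{j=1}^2\mathcal{Q}_{i,m}(PZ_m^j)\,d_{j,m}=-\mathcal{Q}_{i,m}(\eta_m^*),\qquad i=1,2.
\]

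\textbf{Step 1: the remainder terms are $o(1)(|d_{1,m}|+|d_{2,m}|)$.} This goes exactly as in the estimates for $\psi_m^*$, with the following substitutions.
\begin{itemize}
\item[(a)] For the volume part, $V_{x_1}x_1-V_{x_2}x_2=O(|x|^2)$. By \eqref{eq:lambda m Cm} and Hölder, we have $\|\lambda_m|x|^2C_m\|_p\le \int_0^1\|\lambda_m |x|^2 e^{tv^{(1)}_m+(1-t)v^{(2)}_m}\|_p\,dt$. Convexity bounds the integrand by $\lambda_m|x|^2(e^{v^{(1)}_m}+e^{v^{(2)}_m})$, which Lemma \ref{lem:zhangLp} and Lemma \ref{lem:M-T inequality} control by $C\lambda_m^{-\frac{pq-1}{2pq}}$. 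Combined with Lemma \ref{lem:estimate for eta m star}, this gives smallness.
\item[(b)] For the boundary part, $\partial_\nu\bar v_m$ is bounded in $C^1(\partial B_1)$ by the same boundary elliptic estimates applied to each $v_m^{(i)}$. We extend it to a cutoff function $G_{i,m}$ and use the Green formula together with the equation satisfied by $\eta_m^*$, in which $q_m$ is replaced by $\lambda_mV|x|^2C_m$. Away from the origin this coefficient is $O(\lambda_m)$ in $L^p$.
\item[(c)] For the local part, $\nabla\bar v_m(0)=o(1)$ because Proposition 2.4 of \cite{D-W-Z2026} applies to each $v_m^{(i)}$, given the a priori bounds \eqref{eq:local uniqueness assumption}. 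The bound $|\nabla\eta_m^*(0)|\le C(|d_{1,m}|+|d_{2,m}|)$ follows by rescaling $x=\rho_my$, using the interior $W^{2,p_0}$ estimate and then Morrey, exactly as in \eqref{eq:derivative of psi m star is bounded}. Here we use $\|\rho_m^2\lambda_m|\rho_m y|^2C_m(\rho_m y)\|_{L^s(B_2)}\le C_s\rho_m^{-2/s}$.
\end{itemize}

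\textbf{Step 2: the main matrix.} Exactly as before, the boundary and local parts of $\mathcal{Q}_{i,m}(PZ_m^j)$ are $O(\rho_m^2)$. For the volume part we rescale $x=\rho_my$ and write $\lambda_m\rho_m^4 C_m(\rho_my)=\frac{32}{(1+|y^2-\xi_m|^2)^2}\Theta_m(\rho_my)$. The key new point is that $\Theta_m\to1$ in a weighted sense. Here $e^{W_m^{(2)}-W_m}-1$ is controlled by $|\xi_m^{(1)}-\xi_m^{(2)}|=O(\lambda_m^\sigma)\to0$, uniformly in $y$, since the ratio $(1+|y^2-\xi^{(1)}_m|^2)^2/(1+|y^2-\xi_m^{(2)}|^2)^2$ tends to $1$ uniformly. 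The factors $e^{A_m^{(i)}}-1$ are $O(\rho_m^2)$, and the $\phi_m^{(i)}$ contributions are handled by the $L^{p'}$ argument following \eqref{eq:decay of Kmj}. Consequently $\mathcal{Q}_{i,m}(PZ_m^j)$ converges to the same limit matrix as in \eqref{eq:matrix for PimPZjm}, namely $8(\gamma_1+\gamma_2)\,\mathrm{diag}(I_1(s),I_2(s))$ with $s=\xi^*$. We use here that both $\xi_m^{(i)}\to\xi=(\xi^*,0)$.

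\textbf{Step 3: conclusion.} We have $I_2(s)>0$, $I_1(s)\ne0$ by Lemma 3.10 of \cite{D-W-Z2026}, and $\gamma_1+\gamma_2\ne0$ by \eqref{eq:hessian V is nondegenerate}. So the matrix is invertible, and the system yields $|d_{1,m}|+|d_{2,m}|=o(1)(|d_{1,m}|+|d_{2,m}|)$, hence $d_{1,m}=d_{2,m}=0$ for large $m$.

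\textbf{Main obstacle.} I expect the hardest part to be Step 2's handling of the mixed coefficient $C_m$ near the concentration region, namely showing that $e^{(1-t)(W^{(2)}_m-W_m)}$ stays uniformly bounded and tends to $1$ after rescaling. This is exactly where assumption \eqref{eq:bm1-bm2} enters. I would first verify the pointwise bound $|W^{(2)}_m-W_m|\le C|\xi_m^{(1)}-\xi_m^{(2)}|$ on the rescaled variable, uniformly on all of $\mathbb{R}^2$.
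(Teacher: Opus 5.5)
Your proposal is correct and follows the paper's proof essentially step for step. It uses the same functionals $\mathcal{Q}_{i,m}$ with the same volume/boundary/local splitting, the same estimates for $\eta_m^*$, and the same limiting matrix $8(\gamma_1+\gamma_2)\,\mathrm{diag}(I_1(s),I_2(s))$. Your $\Theta_m$ and convexity bound are only a repackaging of the paper's factorization $C_m=e^{PW_m}F_m$ with $\|\Psi_m\|_{L^\infty}=O(\lambda_m^\sigma)$.

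One point you pass over with ``exactly as'' is Step 1(c). There the rescaled source $\widetilde H^C_{j,m}$ carries an extra $O(\rho_m^{4\sigma})$ term coming from $W_m^{(2)}-W_m$. After scaling back, this contributes $\rho_m^{4\sigma-1}$ to $|\nabla\eta_m^*(0)|$, which stays bounded only because $\sigma>\frac14$. So Step 1(c), not Step 2 (where any $\sigma>0$ suffices), is where the quantitative closeness assumption is actually used in this lemma.
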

\begin{proof}
     We begin by introducing two linear functionals related to the Pohozaev identities in Lemma \ref{lem:Pohozaev identity for eta}:
\begin{align*}
\mathcal{Q}_{1,m}(\eta) := & \int_{B_1} \lambda_m C_m(x) \eta \bigl( V_{x_1} x_1 - V_{x_2} x_2 \bigr) \, dx \nonumber
- \int_{\partial B_1} \partial_\nu \bar{v}_m  \partial_\nu \eta \, (x_1^2 - x_2^2) \, d\sigma \nonumber \\
& + 2\pi \bar{v}_{m,x_1}(0)  \eta_{x_1}(0) - 2\pi  \bar{v}_{m,x_2}(0)  \eta_{x_2}(0)\\
=&\mathcal{Q}_{1,m}^{\text{vol}}(\eta)+\mathcal{Q}_{1,m}^{\text{bd}}(\eta)+\mathcal{Q}_{1,m}^{\text{loc}}(\eta),
\end{align*}
where
\[\mathcal{Q}_{1,m}^{\text{vol}}(\eta)=\int_{B_1} \lambda_m C_m(x) \eta \bigl( V_{x_1} x_1 - V_{x_2} x_2 \bigr) \, dx,\]
\[\mathcal{Q}_{1,m}^{\text{bd}}(\eta)=- \int_{\partial B_1} \partial_\nu \bar{v}_m  \partial_\nu \eta \, (x_1^2 - x_2^2) \, d\sigma,\]
\[\mathcal{Q}_{1,m}^{\text{loc}}(\eta)=2\pi \bar{v}_{m,x_1}(0)  \eta_{x_1}(0) - 2\pi  \bar{v}_{m,x_2}(0)  \eta_{x_2}(0).\]
Similarly
\begin{align*}
\mathcal{Q}_{2,m}(\eta) := & \int_{B_1} \lambda_m C_m(x) \eta \bigl( V_{x_1} x_2 + V_{x_2} x_1 \bigr) \, dx \nonumber
- 2\int_{\partial B_1} \partial_\nu \bar{v}_m  \partial_\nu \eta \, x_1 x_2 \, d\sigma \nonumber \\
& + 2\pi \bar{v}_{m,x_1}(0)  \eta_{x_2}(0) + 2\pi  \bar{v}_{m,x_2}(0)  \eta_{x_1}(0)\\
=&\mathcal{Q}_{2,m}^{\text{vol}}(\eta)+\mathcal{Q}_{2,m}^{\text{bd}}(\eta)+\mathcal{Q}_{2,m}^{\text{loc}}(\eta),
\end{align*}
where
\[\mathcal{Q}_{2,m}^{\text{vol}}(\eta)=\int_{B_1} \lambda_m C_m(x) \eta \bigl( V_{x_1} x_2 + V_{x_2} x_1 \bigr) \, dx,\]
\[\mathcal{Q}_{2,m}^{\text{bd}}(\eta)=- 2\int_{\partial B_1} \partial_\nu \bar{v}_m  \partial_\nu \eta \, x_1 x_2 \, d\sigma,\]
\[\mathcal{Q}_{2,m}^{\text{loc}}(\eta)=2\pi \bar{v}_{m,x_1}(0)  \eta_{x_2}(0) + 2\pi  \bar{v}_{m,x_2}(0)  \eta_{x_1}(0).\]
Then we have $\mathcal{Q}_{1,m}(\eta_m)=\mathcal{Q}_{2,m}(\eta_m)=0$. Using \eqref{decompose eta m}, we get
\begin{equation*}
    \sum_{j=1}^2 \mathcal{Q}_{i,m}(PZ_m^j)d_{j,m} = -\mathcal{Q}_{i,m}(\eta_m^{\ast}), \quad i=1,2.
\end{equation*}
Set $\Psi_m:=PW_{m}^{(2)}-PW_m$. Then we have
\[
C_m(x)
=
\int_0^1 e^{t v_m^{(1)}(x)+(1-t)v_m^{(2)}(x)}\,dt
=
e^{PW_m(x)}F_m(x),
\]
where
\begin{equation}\label{eq:definition of Fmy}
F_m(x)=\int_0^1
e^{t\phi_m^{(1)}(x)+(1-t)(\Psi_m(x)+\phi_m^{(2)}(x))}\,dt.
\end{equation}
By \eqref{eq:local uniqueness assumption}, \eqref{eq:Pw expand} and \eqref{eq:bm1-bm2}, we obtain
\begin{equation}\label{eq:Psi m is o(1)}
    \|\Psi_m\|_{L^\infty(B_1)}=O(\lambda_{m}^{\sigma}).
\end{equation}
Moreover, by Lemma \ref{lem:M-T inequality} and $o(1)$ estimates of
$\phi_m^{(j)}$, for any fixed $r\geq 1$, we have
\[
\|F_m\|_{r}\leq C_r .
\]

We proceed to estimate $\mathcal{Q}_{i,m}(\eta_m^{\ast})$ for $i=1,2.$ We first consider the volume integral terms $\mathcal{Q}_{1,m}^{\text{vol}}(\eta_{m}^{\ast})$ and $\mathcal{Q}_{2,m}^{\text{vol}}(\eta_{m}^{\ast})$. Arguing as in the derivation of \eqref{eq:Pim vol psi m star}, we obtain
\begin{align*}
|\mathcal{Q}_{i,m}^{vol}(\eta_m^*)|
&\leq
C\int_{B_1}\lambda_m |x|^2 C_m(x)|\eta_m^*|\,dx  \\
&=
C\int_{B_1}\lambda_m |x|^2 e^{PW_m}F_m|\eta_m^*|\,dx  \\
&\leq
C\left\|\lambda_m |x|^2 e^{PW_m}F_m\right\|_{p}
\left\|\eta_m^*\right\|_{p'}  \\
&\leq
C\left\|\lambda_m |x|^2 e^{PW_m}\right\|_{pq}
\|F_m\|_{pq'}
\left\|\eta_m^*\right\|_{p'}  \\
&\leq
C\lambda_m^{-\frac{pq-1}{2pq}}\left\|\eta_m^*\right\|,\\
&\leq C\lambda^{\frac{1}{4}+\sigma'-\frac{pq-1}{2pq}}(|d_{1,m}|+|d_{2,m}|),\quad i=1,2,
\end{align*}
where  $\frac{1}{p}+\frac{1}{p'}=1$,  $\frac{1}{q}+\frac{1}{q'}=1$ and $p,q>1$ are chosen sufficiently close to 1.

Following the analysis of the boundary terms $\mathcal{P}_{i,m}^{\text{bd}}(\psi_{m}^{\ast})$ in Lemma \ref{lem:c1m=c2m=0}, we obtain
\[
|\mathcal{Q}_{i,m}^{\text{bd}}(\eta_{m}^{\ast})|=o(1)(|d_{1,m}|+|d_{2,m}|),\quad i=1,2.
\]

In the following, we estimate the local derivative terms $\mathcal{Q}_{i,m}^{\text{loc}}(\eta_{m}^{\ast})$ for $i=1,2$. In view of \eqref{eq:derivative of vm is o(1)}, it suffices to prove
\[
    \left|\frac{\partial\eta_{m}^{\ast}}{\partial x_{i}}(0)\right|\leq C(|d_{1,m}|+|d_{2,m}|),\quad \text{for}\;\; i=1,2.
\]
As in \eqref{eq:psi m equation}, we deduce
\[
-\Delta \eta_m^*
=
q_m^C \eta_m^*
+
\sum_{j=1}^2 d_{j,m}H_{j,m}^C,
\]
where $q_m^C := \lambda_m V(x)|x|^2 C_m(x)$, $H_{j,m}^C := q_m^C PZ_m^j - |x|^2 e^{W_m}Z_m^j$. 

We define $\widetilde{\eta}_m^*(y):=\eta_m^*(\rho_m y),$ by applying the change of variable $x=\rho_m y$, we obtain in \(B_2(0)\),
\[
-\Delta_y\widetilde{\eta}_m^*
=
\widetilde{q}_m^C(y)\widetilde{\eta}_m^*
+
\sum_{j=1}^2 d_{j,m}\widetilde{H}_{j,m}^C(y),
\]
where $\widetilde{q}_m^{C}(y):=\rho_m^2 q_m^{C}(\rho_m y)$, $\widetilde {H}_{j,m}^{C}(y):=\rho_{m}^{2} H_{j,m}^{C}(\rho_{m} y)$.

Analogously to the derivation of \eqref{eq:q m tuta Ls estimate}, we obtain
\[
   \|\widetilde{q}_m^C\|_{L^s(B_2)} \leq C_s \rho_m^{-2/s}.
\]

In what follows, we estimate $\widetilde H_{j,m}^{C}$ for $j=1,2$. We decompose $\widetilde H_{j,m}^{C}$ as
\[
\begin{aligned}
\widetilde{H}_{j,m}^{C}(y)
&= \rho_m^2 |\rho_m y|^2 e^{W_m(\rho_m y)}
\left( PZ_m^j(\rho_m y) - Z_m^j(\rho_m y) \right) \\
&\quad + \rho_m^2
\left( q_m^C(\rho_m y) - |\rho_m y|^2 e^{W_m(\rho_m y)} \right)
PZ_m^j(\rho_m y)\\
&=\widetilde{H}_{j,m}^{C,(1)}(y)+\widetilde{H}_{j,m}^{C,(2)}(y).
\end{aligned}
\]
Notice that $\widetilde{H}_{j,m}^{C,(1)}(y)=\widetilde{H}_{j,m}^{(1)}(y)$, one has
\begin{equation}\label{eq:Hjm C1 tuta Lp estimate}
\|\widetilde{H}_{j,m}^{C,(1)}(y)\|_{L^{p_0}(B_2)}\leq C\rho_{m}^{2}.
\end{equation}
For $\widetilde{H}_{j,m}^{C,(2)}(y)$, recall the definition of $Q_m(y)$, a direct computation yields that
\[
\begin{aligned}
\widetilde{H}_{j,m}^{C,(2)}(y)
&=
\left[
Q_m(y)E_m(y)F_m(\rho_m y)
-
\frac{Q_m(y)}{V(\rho_m y)}
\right]
PZ_m^j(\rho_m y) \\
&=
Q_m(y)PZ_m^j(\rho_m y)
\left[
E_m(y)F_m(\rho_m y)
-
\frac{1}{V(\rho_m y)}
\right],
\end{aligned}
\]
where $E_{m}(y)$ is defined in \eqref{eq:definition of Emy} and is bounded. Following the decomposition in \eqref{eq:decompose Emy e phim rhom y}, we write
\begin{equation}\label{eq:decompose Emy Fmy}
    E_m(y)F_m(\rho_my)- \frac{1}{V(\rho_m y)}=E_m(y)(F_m(\rho_my)-1)
+(E_m(y)-1)+\left(1-\frac{1}{V(\rho_m y)}\right).
\end{equation}
Recalling the definition $F_m$ in \eqref{eq:definition of Fmy}, by \eqref{eq:Psi m is o(1)}, H${\rm \ddot{o}}$lder's inequality and Lemma \ref{lem:M-T inequality}, we deduce
\begin{equation*}
\begin{aligned}
\|F_m(\rho_m\cdot)-1\|_{L^{p_0}(B_2)}
&\leq C\left(
\left\|e^{\phi_m^{(1)}(\rho_m\cdot)}-1\right\|_{L^{p_0}(B_2)}
+
\left\|e^{\phi_m^{(2)}(\rho_m\cdot)}-1\right\|_{L^{p_0}(B_2)}
\right.\\
&\qquad\left.
+
\left\|e^{\Psi_m(\rho_m\cdot)}-1\right\|_{L^{p_0}(B_2)}
\right),
\end{aligned}
\end{equation*}
where $p_0>2$. Using \eqref{eq: 1-e^phi Lp estimate}, \eqref{eq:e phim rmy Ls} and \eqref{eq:local uniqueness assumption}, we obtain
\begin{equation*}
    \|F_m(\rho_m\cdot)-1\|_{L^{p_0}(B_2)}\leq C(\rho_{m}^{-\frac{2}{p_0}}\lambda_{m}^{\frac{1}{2}-\varepsilon}+\lambda_{m}^{\sigma}).
\end{equation*}
Treating the last two terms on the right hand side of \eqref{eq:decompose Emy Fmy} in the same way as in Lemma \ref{lem:c1m=c2m=0}, we arrive at
\begin{equation}\label{eq:H jm2C tuta Lp estimate}
    \|\widetilde{H}_{j,m}^{C,(2)}(y)\|_{L^{p_0}(B_2)}\le C(\rho_{m}^{-2/p_0+2-4\varepsilon}+\rho_m^{4\sigma}).
\end{equation}
A combination of \eqref{eq:Hjm C1 tuta Lp estimate} and \eqref{eq:H jm2C tuta Lp estimate} leads to
\[
\|\widetilde{H}_{j,m}^{C}(y)\|_{L^{p_0}(B_2)}\le C(\rho_{m}^{-2/p_0+2-4\varepsilon}+\rho_{m}^{4\sigma}).
\]
Analogously to \eqref{eq:interior estimate for psi m star tuta}-\eqref{eq:nabla y psi m star tuta 0}, we get
\[
|\nabla_y \widetilde{\eta}_m^*(0)|
\le\|\nabla\widetilde{\eta}_{m}^{\ast}\|\leq
C (\rho_m^{1+4\sigma'-\frac{2}{p_0}}
+\rho_m^{4\sigma})\left(|d_{1,m}|+|d_{2,m}|\right),
\]
where $p_0>2$, $\sigma>\frac{1}{4}$. Scaling back, we obtain
\[
|\nabla_x \eta_m^*(0)|
\le|\rho_m^{-1}\nabla_y \widetilde{\eta}_m^*(0)|\le
C(\rho_m^{4\sigma'-\frac{2}{p_0}}+\rho_{m}^{4\sigma-1})\left(|d_{1,m}|+|d_{2,m}|\right).
\]
Since $\rho_m=\left(\frac{\lambda_m}{32}\right)^{1/4}$, we choose $p_0$ sufficiently large such that $4\sigma'-\frac{2}{p_0}>0$. Hence,
\[
|\nabla_x\eta_m^*(0)|
\le
C\left(|d_{1,m}|+|d_{2,m}|\right),
\]
which implies
\[
|\mathcal{Q}_{i,m}^{\text{loc}}(\eta_{m}^{\ast})|=o(1)(|d_{1,m}|+|d_{2,m}|),\quad i=1,2.
\]

We now focus on the main terms $\mathcal{Q}_{i,m}(PZ_m^j)$, $i,j=1,2.$ As in Lemma \ref{lem:c1m=c2m=0}, we  decompose  $\mathcal{Q}_{i,m}(PZ_m^j)$ as
\begin{equation}\label{eq:Qim PZmj decompose}
\mathcal{Q}_{i,m}(PZ_m^j)
=
\mathcal{Q}_{i,m}^{\mathrm{vol}}(PZ_m^j)
+
\mathcal{Q}_{i,m}^{\mathrm{bd}}(PZ_m^j)
+
\mathcal{Q}_{i,m}^{\mathrm{loc}}(PZ_m^j).
\end{equation}
Following the argument in Lemma \ref{lem:c1m=c2m=0}, we obtain
\begin{equation}\label{eq:Qim bd loc are o(1)}
    \mathcal{Q}_{i,m}^{\mathrm{bd}}(PZ_m^j)=\mathcal{Q}_{i,m}^{\mathrm{loc}}(PZ_m^j)=o(1), \quad i,j=1,2.
\end{equation}
Inserting \eqref{eq:Qim bd loc are o(1)} into \eqref{eq:Qim PZmj decompose} yields
\[\mathcal{Q}_{i,m}(PZ_m^j)=\mathcal{Q}_{i,m}^{\mathrm{vol}}(PZ_m^j)
+o(1), \quad i,j=1,2.\]

We first consider $\mathcal{Q}_{1,m}^{\mathrm{vol}}(PZ_m^j)$ for $j=1,2$. By \eqref{eq:V}, we have
\begin{equation*}
\mathcal{Q}_{1,m}^{\mathrm{vol}}(PZ_m^j)
=
\int_{B_1}
\lambda_m C_m PZ_m^j
\left(\gamma_1 x_1^2-\gamma_2 x_2^2\right)\,dx
+o(1).
\end{equation*}
By applying the change of variable  $x=\rho_{m}y$ and recalling the definition of  $\xi_m$, we deduce
\[
\mathcal{Q}_{1,m}^{\mathrm{vol}}(PZ_m^j)
= 32\int_{B_{1/\rho_m}}
\frac{(\gamma_1 y_1^2-\gamma_2 y_2^2)\zeta_{mj}}
{(1+|\zeta_m|^2)^3}E_m(y)F_{m}(\rho_my)\,dy
+o(1),
\]
where $F_m$ is defined in \eqref{eq:definition of Fmy} and $\zeta_m:=y^{2}-\xi_m$. Moreover, since both $\phi_{m}^{(1)}$ and $\phi_{m}^{(2)}$ satisfy the same smallness estimates as $\phi_m$, the argument used to prove \eqref{eq:Emy e phim-1} gives
\[
    \int_{B_{1/\rho_m}}
\frac{(\gamma_1 y_1^2-\gamma_2 y_2^2)\zeta_{mj}}
{(1+|\zeta_m|^2)^3}(E_m(y)F_m(\rho_my)-1)\,dy=o(1).
\]
Consequently,
\begin{equation*}
\mathcal{Q}_{1,m}^{\mathrm{vol}}(PZ_m^j)
= 32\int_{B_{1/\rho_m}}
\frac{(\gamma_1 y_1^2-\gamma_2 y_2^2)\zeta_{mj}}
{(1+|\zeta_m|^2)^3}\,dy
+o(1),\quad j=1,2.
\end{equation*}
Moreover, we derive
\[
\mathcal Q^{\rm vol}_{2,m}(PZ_m^j)
=
32(\gamma_1+\gamma_2)
\int_{B_{1/\rho_m}}
\frac{y_1y_2\zeta_{mj}}
{(1+|\zeta_m|^2)^3}\,dy
+o(1),
\qquad j=1,2.
\]
After the change of variables \(\zeta_m=y^2-\xi_m\), and using
\(\xi_m\to \xi=(\xi^{*},0)\in\mathbb{R}^{2}\), we deduce that
\[
\mathcal{Q}_{i,m}^{\mathrm{vol}}(PZ_m^j)=\mathcal{P}_{i,m}^{\mathrm{vol}}(PZ_m^j)+o(1),\quad i,j=1,2.
\]
The rest of the proof proceeds as in Lemma \ref{lem:c1m=c2m=0}, and we conclude that
$d_{1,m}=d_{2,m}=0$
\end{proof}

We now prove Theorem \ref{thm:local uniqueness}.
\begin{proof}
By Lemma \ref{lem:d1m=d2m=0} and \eqref{decompose eta m}, we obtain $\eta_m=\eta_{m}^{\ast}\in K_{m}^{\perp}$. Then Lemma \ref{lem:estimate for eta m star} shows that
\[
\|\eta_m^*\|
\leq
C\lambda_m^{\frac14+\sigma'}
(|d_{1,m}|+|d_{2,m}|)=0.
\]
Consequently, $\|\eta_m\|=0$, which contradicts $\|\eta_m\|=1$.
\end{proof}

\section*{Acknowledgments}
Yuxia Guo was supported by the National Natural Science Foundation of China (No. 12271283) and  National Key R\&D Program (2023YFA1010002).

\section*{Statements and Declarations}

The authors confirm that there are no relevant financial or non-financial competing interests to report.
	
\section*{Data Availability Statements}

All data generated or analyzed during this study are included in this article.

\end{document}